\numberwithin{equation}{section}
\newtheorem{theorem}{Theorem}[section]
\newtheorem{lemma}[theorem]{Lemma}
\newtheorem{proposition}[theorem]{Proposition}
\newtheorem{corollary}[theorem]{Corollary}
\newtheorem{conjecture}[theorem]{Conjecture}
\theoremstyle{definition}
\newtheorem{def-prop}[theorem]{Definition-Proposition}
\newtheorem{remark}[theorem]{Remark}
\newtheorem{question}[theorem]{Question}
\newtheorem*{Mysketch}{Sketch of proof} 
  {\pushQED{\qed}\begin{Mysketch}}
  {\popQED\end{Mysketch}}
\DeclareMathOperator{\Ass}{Ass}
\DeclareMathOperator{\height}{ht}
\DeclareMathOperator{\pd}{pd}
\DeclareMathOperator{\bigheight}{bight}
\DeclareMathOperator{\vn}{v}
\def\KK{\mathbb{K}}
\begin{document}

\title[Binomial edge ideals of crown graphs]{Binomial edge ideals of crown graphs}

\author{Arvind Kumar, Joshua Pomeroy,  and Le Tran}
\address{Department of Mathematical Sciences \\
New Mexico State University\\
Las Cruces, NM 88003}
\email{arvkumar@nmsu.edu}
\urladdr{https://sites.google.com/view/arvkumar/home}
\email{reuel@nmsu.edu}
\email{letran95@nmsu.edu}
\urladdr{ https://sites.google.com/view/tran-ngocle}

\keywords{Binomial Edge Ideals, Crown Graphs, Krull dimension, Big-height, projective dimension, $\vn$-number}
\subjclass[2020]{13C10, 13C15,  05E40, 13F20}

\begin{abstract}
In this article, we explore the class of graphs for which the projective dimension of the quotient of the binomial edge ideals matches the big height of that ideal. Additionally, we investigate the Vasconcelos number of binomial edge ideals for cycles and crown graphs. We also provide proof for  \cite[Conjecture 4.13]{vconjecture} which is related to the Vasconcelos number of binomial edge ideals for cycles. \end{abstract}
\maketitle

\section{Introduction}\label{intro}
It is well-known that if $I$ is a homogeneous ideal in a standard graded polynomial ring $S$, then the projective dimension of the quotient $\frac{S}{I}$ is greater than or equal to the big height of $I$, where the \textit{big height} of an ideal is the largest height of the minimal primes of that ideal. When $\frac{S}{I}$ is Cohen-Macaulay, then the projective dimension of $\frac{S}{I}$ is the same as $\bigheight(I)$. Our goal is to understand which class of ideals, other than Cohen-Macaulay, has the same projective dimension as the big height of $I$. Specifically, we are exploring this question in the context of binomial edge ideals. In this article, we investigate this question for binomial edge ideals associated with various classes of graphs, which include block graphs, cycles, wheel graphs, complete multipartite graphs, crown graphs, etc.

Our other objective is to study the local Vasconcelos number of binomial edge ideals for graphs. Let $I$ be a proper non-zero homogeneous ideal in $S$, and let $\mathfrak{p} \in \Ass(I)$. The \textit{local Vasconcelos number} of $I$ with respect to $\mathfrak{p}$, denoted by $\vn_{\mathfrak{p}}(I)$, is the least possible degree of a homogeneous element $f$ such that $I:f=\mathfrak{p}$, i.e., $$\vn_{\mathfrak{p}}(I):= \min \{ d~:~ \exists f \in S_d \text{ so that } I:f=\mathfrak{p}\},$$ where $S_d$ is the $\KK$-vector space spanned by all the monomials of degree $d$.  The \textit{Vasconcelos number}, abbreviated as the \textit{$\vn$-number}, of $I$, denoted by $\vn(I)$, is the minimum of the local Vasconcelos numbers of $I$, i.e., $$\vn(I):= \min \{ \vn_\mathfrak{p}(I)~:~ \mathfrak{p}\in \Ass(I)\}.$$ 
The concept of the $\vn-$number of homogeneous ideals was first introduced by Cooper et al. \cite{CSTPV20}. In this article, we examine the $\vn$-number of binomial edge ideals of crown graphs and cycles. We prove a recent conjecture by Deblina et al. \cite{vconjecture} on the $\vn$-number of binomial edge ideals of cycles.

The binomial edge ideal of a graph was introduced by Herzog et al. in \cite{HHHKR10} and independently by Ohtani in \cite{Oh11}. Let $G$ be a simple graph on the vertex set $[n]:=\{1,\ldots,n\}.$ Let $S = \KK[x_1, \ldots, x_n,y_1,\ldots,y_n] $  be a standard graded polynomial ring in $2n$ variables over an arbitrary field \( \KK \). The \textit{binomial edge ideal} of $G$ is defined as $$J_G:=\langle x_iy_j-x_jy_i~:~ \{i,j\} \in E(G) \rangle.$$

The focus of this article is on crown graphs and cycles. We examine the binomial edge ideals of crown graphs with the intent of achieving our main objectives. To start, we analyze the structures of the minimal primes of the binomial edge ideals of crown graphs (see Section \ref{Krulldim}). We utilized the minimal primes description provided by Herzog et al. \cite{HHHKR10} to deduce the structure of the minimal primes of binomial edge ideals of crown graphs. And then using the structure of minimal primes, we determine the big height of the binomial edge ideal of crown graphs.

In Section \ref{depth}, our main objective is to determine for which class of graphs the projective dimension of the quotient of their binomial edge ideals is the same as the big height. We prove that for cycles, block graphs, wheel graphs, and most complete multipartite graphs, the projective dimension is the same as the big height. Additionally, we show that the binomial edge ideals of maximal projective dimension have projective dimension same as their big height. We compute the projective dimension of the binomial edge ideals of crown graphs and prove that it is the same as the big height.

In Section \ref{localvnumber}, we delve into the $\vn$-number of binomial edge ideals of crown graphs and cycles. We utilize Ohtani's recursive method from \cite{Oh11} to establish this conjecture. Ohtani demonstrated in \cite{Oh11} that for any graph $G$ and an internal vertex $v$ of $G$, the following holds: $$J_G=J_{G_v}\bigcap \left( ( x_v,y_v ) + J_{G\setminus v}\right).$$ This technical lemma of Ohtani was initially employed by the first author in \cite{K21} and in his thesis. Subsequently, the first author extended this technical lemma to the case of generalized binomial edge ideals in \cite{genbin}. Since then, this technical lemma has been utilized in numerous papers and has become a central tool. We employ this technical lemma to investigate the local $\vn$-number of binomial edge ideals of crown graphs. Furthermore, we use it to prove a conjecture by Deblina et al. in \cite{vconjecture} regarding the $\vn$-number of cycles.

{\bf Acknowledgement:} This research is conducted and completed during a focused five-week summer research program in Summer 2024, designed and supervised by the first author. The program is financially supported by the Department of Mathematical Sciences at New Mexico State University. We express our sincere gratitude to the department for supporting this research. We also gratefully acknowledge the anonymous referee for their insightful comments and constructive suggestions, which greatly improve the clarity and quality of this manuscript.

\section{Preliminaries}\label{preliminaries}

In this section, we  recall  some notation and definitions on
graphs which we will use throughout the article.

Let $G$ be a finite simple graph with a vertex set $V(G)$ and an edge set $E(G)$. When we refer to $G[A]$, we mean the {\it induced subgraph} of $G$ on the vertex set $A$. In other words, for any two vertices $i, j \in A$, the edge $\{i,j\}$ is in $E(G[A])$ if and only if $\{i,j\}$ is in $E(G)$. 
If we take a vertex $v$, $G \setminus v$ refers to the induced subgraph of $G$ on the vertex set $V(G) \setminus \{v\}$. Meanwhile, $N_G(v) := \{u \in V(G) : \{u,v\} \in E(G)\}$ denotes the {\it neighborhood} of $v$. Additionally, $G_v$ is the graph on the vertex set $V(G)$ with edge set $E(G_v) = E(G) \cup \{ \{u,w\} : u,w \in N_G(v)\}$. The {\it degree} of a vertex $v$, represented by $\deg_G(v)$, is the size of $N_G(v)$.

A \textit{complete graph} is one in which every vertex is adjacent to every other vertex. A complete graph on $n$ vertices is denoted by $K_n$. A graph is said to be {\it connected} if, for every pair of distinct vertices, there exists a sequence of edges of the graph connecting them. If a graph is not connected, it is said to be {\it disconnected}. A vertex $v$ of $G$ is called a {\it cut vertex} if the number of connected components of $G \setminus v$ is more than the number of connected components of $G$. A maximal connected subgraph of \(G\) with no cut vertex is called a \textit{block}. A graph \(G\) is called a {\it block graph} if each block of \(G\) is a complete graph.

A vertex $v\in V(G)$ is called a \textit{free} or \textit{simplicial} vertex if $v$ belongs to exactly one maximal complete subgraph of $G$. Any vertex that belongs to more than one maximal complete subgraph of $G$ is called an \textit{internal} vertex. The \textit{path graph} on $n$ vertices, denoted by $P_n$, is a graph in which the vertices can be ordered as $\{v_1,\ldots, v_n\}$ such that the edge set is $\{\{v_i,v_{i+1}\}~:~ 1 \le i \le n-1\}$. The \textit{cycle graph} on $n$ vertices, denoted by $C_n$, is a graph in which the vertices can be ordered as $\{v_1,\ldots, v_n\}$ such that the edge set is $\{\{v_i,v_{i+1}\}~:~ 1 \le i \le n-1\}\cup \{\{v_1,v_n\}\}$. A \textit{bipartite graph} is a graph whose vertex set can be partitioned into two sets, such that every edge connects a vertex in one set to a vertex in the other set.

A {\it crown graph} is a bipartite graph denoted by \(C_{n,n}\), with vertex set \([2n]\) divided into two sets: \(X=\{2i-1~:~i \in [n]\}\) and \(Y=\{2i~:~i \in [n]\}\). This graph has edges \(\{2i-1, 2j\}\) whenever \(i,j \in [n]\) and \(i \neq j\). Figure \ref{fig:$C_{5,5}$} represents the crown graph, $C_{5,5}$, on $10 $ vertices. 
 
\begin{figure}[ht]
    \centering
     \begin{tikzpicture}
        \filldraw (-1,4) circle(.3ex); 
         \filldraw (0,4) circle(.3ex); 
          \filldraw (1,4) circle(.3ex); 
           \filldraw (-2,4) circle(.3ex); 
            \filldraw (2,4) circle(.3ex); 
\filldraw (0,2) circle(.3ex);
\filldraw (-1,2) circle(.3ex);
\filldraw (-2,2) circle(.3ex);
\filldraw (1,2) circle(.3ex);
   \filldraw (2,2) circle(.3ex);
\draw (2,4)--(-1,2);
\draw (2,4)--(-2,2);
\draw (2,4)--(0,2);
\draw (2,4)--(1,2);
\draw (-2,4)--(-1,2);
\draw (0,4)--(-1,2);
\draw (1,4)--(-1,2);
\draw (-1,4)--(2,2);
\draw (-1,4)--(-2,2);
\draw (-1,4)--(0,2);
\draw (-1,4)--(1,2);
\draw (-2,4)--(2,2);
\draw (1,4)--(2,2);
\draw (0,4)--(2,2);
\draw (-2,4)--(0,2);
\draw (-2,4)--(1,2);
\draw (1,4)--(0,2);
\draw (1,4)--(-2,2);
\draw (0,4)--(1,2);
\draw (0,4)--(-2,2);
\node at (-1,4.3) {$3$};
\node at (-2,4.3) {$1$};
\node at (0,4.3) {$5$};
     \node at (1,4.3) {$7$};
     \node at (2,4.3) {$9$};
     \node at (-2,1.7) {$2$};
     \node at (-1,1.7) {$4$};
     \node at (0,1.7) {$6$};
     \node at (1,1.7) {$8$};
          \node at (2,1.7) {$10$};
          
  \end{tikzpicture}
    \caption{$C_{5,5}$}
    \label{fig:$C_{5,5}$}
\end{figure}

A subset $T \subseteq V(G)$ is called a \textit{cut set} if the number of connected components of $G[\bar T]$ is greater than the number of connected components of $G$, where $\bar{T}:=V(G)\setminus T$. We say that $G$ is $k$-\textit{vertex-connected} if $k < |V(G)|$ and for every $A \subset [n]$ with $|A| < k$, the induced graph $G[\bar{A}]$ is connected. The \textit{vertex connectivity} of a connected graph $G$, denoted by $\kappa(G)$, is defined as the maximum positive integer $k$ such that $G$ is $k$ vertex-connected.

A {\it dominating set} of a connected graph is a subset $T \subseteq V(G)$ such that for every $v \in V(G)$, either $v \in T$ or $v$ is adjacent to a vertex in $T$. A {\it connected dominating set} is a dominating set for which the induced subgraph is connected. The {\it connected domination number} is the size of the smallest connected dominating set of $G$.

If we are given two graphs, $G_1$ and $G_2$, the {\it join product of $G_1$ and $G_2$}, represented as $G_1 \ast G_2$, is a graph with vertex set $V(G_1) \sqcup V(G_2)$, and edges from the combined set of edges from $E(G_1)$ and $E(G_2)$, along with additional edges connecting each vertex in $V(G_1)$ to every vertex in $V(G_2)$. A {\it complete $k$-partite} graph is a join product of $k$ empty graphs, where an {\it empty graph} is a graph with no edges.

\section{Krull dimension of binomial edge ideals}\label{Krulldim}
In this section, we study the Krull dimension and the considerable height of the binomial edge ideals of the crown graphs. To compute these invariants, we explicitly understand the structure of the associated primes of the binomial edge ideals of the crown graphs, which will be helpful in studying other related invariants later in the paper.

For a positive integer $n$, we set $[n]:=\{1,\ldots,n\}$.  Let $G$ be a simple graph with the vertex set $V(G)=[n]$ and edge set $E(G)$. Let 
$S=\KK[x_1, \ldots , x_n, y_1, \ldots , y_n]$ be a standard graded polynomial ring in $2n$ variables over a field $\KK$. The \textit{binomial edge ideal} of $G$ is defined as
$$J_G := \langle x_i y_j - x_j y_i~:~ \{i, j\} \in E(G) \rangle.$$ This class of ideals was introduced by Herzog et al. in \cite{HHHKR10} and independently by Ohtani in \cite{Oh11} and has been the subject of many graduate theses and research articles since then. 

Herzog et al. in \cite{HHHKR10} and  Ohtani in \cite{Oh11} proved that $J_G$ is a radical ideal. Herzog et al. in \cite{HHHKR10} obtained the complete description of minimal associated primes of $J_G$ while Ohtani in \cite{Oh11} provided a recursive way to obtain associated primes of $J_G$. Both of these descriptions have been heavily used in the literature. We use both of these descriptions for our research in this article. We recall here the description of minimal associated primes given by Herzog et al. in \cite{HHHKR10}, and we will return to the recursive method by Ohtani \cite{Oh11}  later in the article.

For  $T \subset [n]$, we set $\bar{T} := [n]\setminus T$ and $c_G(T)$ for the number of connected components of $G[\bar{T}]$, where $G[\bar T]$ is an induced subgraph of $G$ on the vertex set $\bar T$. Let $G_1,\ldots,G_{c_G(T)}$ be the connected components of $G[\bar{T}]$ with vertex sets $V(G_1),\ldots,V(G_{c_G(T)})$, respectively. For $1 \le i \le c_G(T)$, 
 we set $\tilde{G}_i$ for the complete graph on $V(G_i)$. Set 
$$P_T(G) := \langle x_i, y_i~:~ i \in T \rangle + J_{\tilde{G_1}}+\cdots +J_{\tilde{G}_{c_G(T)}}.$$ The  notion  $P_T(G)$ was introduced by 
Herzog et al. in \cite{HHHKR10} and they proved that $P_T(G)$ is a prime ideal containing $J_G$ and (see \cite[Theorem 3.2]{HHHKR10}) $$J_G =  \underset{T \subseteq [n]}\bigcap P_T(G).$$ 
However, for every $T \subseteq [n]$, $P_T(G)$ may not be a minimal prime of $J_G$. The minimal primes correspond to sets with a special combinatorial property known as \textit{cut point property}. A subset $T \subseteq [n]$ has the cut point property if either $T=\emptyset$ or for each $i \in T$,  $i$ is a cut vertex of the graph $G[\bar{T} \cup \{i\}]$, i.e., $c_G(T)>c_{G}(T \setminus \{i\})$.
We set $\mathcal{C}(G) := \{ T~:~ T \; \text{has the cut point property} \}.$ Then, it follows from Herzog et al. \cite[Corollary 3.9]{HHHKR10} 
that $P_T(G)$ is a minimal prime of $J_G$ if and only if $T \in \mathcal{C}(G)$.

To compute the Krull dimension, big height, and multiplicity of the binomial edge ideals of the crown graphs, we understand the structure of sets with the cut point property. We begin our investigation with a technical lemma.

\begin{lemma}\label{lem:tech-connected}
    Let $G= C_{n,n}$ with $n\ge 3$ and let $T\subseteq [2n]$. We set  $X := \{1,3,\ldots, 2n-1\}$ and $Y := \{2,4,\ldots, 2n\}$. 
Suppose $\vert X\setminus T\vert \ge 2$, $\vert Y \setminus T \vert \ge 2$, and $\vert X \setminus  T \vert + \vert Y \setminus T\vert \ge 5$. Then, $G[\bar T]$ is connected.
\end{lemma}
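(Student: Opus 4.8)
The plan is to describe the induced subgraph $G[\bar T]$ explicitly and then reduce the statement to a short common-neighbor argument. Write $A := X \setminus T$ and $B := Y \setminus T$, so that $\bar T = A \sqcup B$ and $G[\bar T]$ has vertex set $A \cup B$. Recalling that in $C_{n,n}$ the odd vertex $2i-1$ and the even vertex $2j$ are adjacent precisely when $i \ne j$, the subgraph $G[\bar T]$ is exactly the complete bipartite graph on parts $A$ and $B$ with the partner edges $\{2i-1, 2i\}$ removed whenever both endpoints survive in $\bar T$; that is, $G[\bar T]$ is $K_{|A|,|B|}$ minus a partial matching. Since $C_{n,n}$ admits the automorphism interchanging each odd vertex with its even partner (which swaps $X$ and $Y$ and preserves adjacency), I may assume without loss of generality that $|A| \ge |B|$. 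Combined with $|A|,|B| \ge 2$ and $|A| + |B| \ge 5$, this forces $|A| \ge 3$.

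Next I would show that all of $B$ lies in a single connected component. For any two distinct even vertices $2j$ and $2j'$ in $B$, an odd vertex $2i-1 \in A$ is adjacent to both exactly when $i \ne j$ and $i \ne j'$; at most two vertices of $A$ are thereby excluded, so at least $|A| - 2 \ge 1$ vertices of $A$ are common neighbors. Choosing such a common neighbor $w \in A$ gives the path $2j - w - 2j'$, so any two vertices of $B$ are joined. Hence $B$ is contained in a single component $C$ of $G[\bar T]$.

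Finally I would attach the vertices of $A$. For each odd vertex $2i-1 \in A$, its only possible non-neighbor is the even vertex $2i$ (its partner), so it is adjacent to every other even vertex of $B$; since $|B| \ge 2$, at least $|B| - 1 \ge 1$ of these survive, giving $2i-1$ a neighbor in $B \subseteq C$ and therefore placing $2i-1$ in $C$. Thus every vertex of $\bar T$ lies in $C$, proving that $G[\bar T]$ is connected.

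The argument is elementary; the only real subtlety is keeping careful track of where each hypothesis is spent. The condition $|A| \ge 3$ (obtained from $|A| + |B| \ge 5$ after the symmetry reduction) is exactly what guarantees common neighbors inside $A$ for pairs in $B$, while $|B| \ge 2$ is what guarantees that every vertex of $A$ retains a neighbor after its single partner edge is deleted. The main point to handle correctly is the symmetry reduction: without it one would also need the dual case $|B| \ge 3$ together with the borderline configuration $|A| = |B| = 2$, which is genuinely disconnected (it reduces to two disjoint edges) and is precisely the case excluded by the hypothesis $|A| + |B| \ge 5$.
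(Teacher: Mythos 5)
Your proof is correct, and it reaches the conclusion by a cleaner route than the paper's. The paper makes the same initial reduction (by the $X$--$Y$ symmetry one may assume $\lvert X\setminus T\rvert \ge 3$), but then fixes a specific five-vertex witness set $\{2i-1,2j-1,2k-1,2\ell,2m\}$ and runs a four-case analysis, according to how $\{i,j,k\}$ meets $\{\ell,m\}$, to verify that the induced subgraph on those five vertices is connected; afterwards it attaches every remaining vertex using exactly the degree counts you use ($u \in Y\setminus T$ has at least $\lvert X\setminus T\rvert - 1$ neighbors in $X\setminus T$, and dually). You replace the case analysis with a common-neighbor lemma: since $G[\bar T]$ is $K_{|A|,|B|}$ minus a partial matching, any two vertices of $B$ have at least $\lvert A\rvert - 2 \ge 1$ common neighbors in $A$, so $B$ lies in one component, and then each vertex of $A$ keeps at least $\lvert B\rvert - 1 \ge 1$ neighbors in $B$. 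This buys you a case-free argument and makes explicit the automorphism $2i-1 \leftrightarrow 2i$ that the paper's ``without loss of generality'' leaves implicit; the paper's version, in exchange, produces an explicit connected skeleton, which it reuses pictorially in later arguments (e.g., Theorem 3.3). One small caveat in your closing commentary: the borderline configuration $\lvert A\rvert = \lvert B\rvert = 2$ is not always disconnected --- it reduces to two disjoint edges only when the two surviving even vertices are precisely the partners of the two surviving odd vertices; if the index sets are disjoint one gets a connected $K_{2,2}$. This does not affect your proof, but stated as-is the remark overstates the sharpness example; the correct statement is that the hypothesis $\lvert A\rvert + \lvert B\rvert \ge 5$ cannot be dropped because \emph{some} $2{+}2$ configuration is disconnected.
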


\begin{proof}
First, note that $G[\bar T]$ is a graph on the vertex set $\left(X\setminus T \right)\sqcup \left(Y \setminus T\right)$. Since $\vert X\setminus T\vert \ge 2$, $\vert Y\setminus T\vert \ge 2$, and $\vert X\setminus T \vert + \vert Y\setminus T\vert \ge 5$, either $|X\setminus T| \ge 3$ or $|Y\setminus T| \ge 3.$ Assume, without loss of generality, that $|X\setminus T| \ge 3.$ Then, there exist $ 1 \le i<j<k \le  n$ and $1 \le  \ell <  m \le n$ such that $2i-1,2j-1,2k-1\in X\setminus T$ and $2\ell, 2m \in Y\setminus T.$ We claim that the induced subgraph on $\{2i-1,2j-1,2k-1,2\ell, 2m\}$ is connected. We have the following cases: 
 \begin{minipage}{\linewidth}
  \begin{minipage}{0.6\linewidth}
(1) Suppose $\{i,j,k\} \cap \{\ell,m\} =\{\ell,m\}$. Without loss of generality, assume that $i=\ell$ and $j=m$. Then, $2k-1$ is adjacent to both $2\ell,2m$ and $2\ell$ is adjacent to both $2j-1,2k-1$, and $2m$ is adjacent to both $2i-1,2k-1$, see Figure \ref{fig:case1}. \end{minipage}\hspace{0.5in}
\begin{minipage}{0.35\linewidth}
    \begin{figure}[H]
    \centering
    \begin{minipage}{\linewidth}
     \begin{tikzpicture}[scale=0.65]
\captionsetup{justification=centering,margin=2cm}
        \filldraw (-2,4) circle(.3ex); 
  \filldraw (-2,2) circle(.3ex);
\filldraw (0,2) circle(.3ex);
\filldraw (2,4) circle(.3ex);
   \filldraw (0,4) circle(.3ex);
\draw (-2,4)--(0,2);
\draw (-2,2)--(2,4);
\draw (0,4)--(-2,2);
\draw (0,2)--(2,4);
    \node at (-2,4.3) {$2i-1$};
    \node at (-2,1.7) {$2\ell$};
      \node at (0,1.7) {$2m$};
        \node at (2,4.3) {$2k-1$};
          \node at (0,4.3) {$2j-1$};
  \end{tikzpicture}    
    \caption{} \label{fig:case1}
    \end{minipage}
\end{figure} 
\end{minipage}
\end{minipage}
 \begin{minipage}{\linewidth}
  \begin{minipage}{0.6\linewidth} (2) Suppose $\{i,j,k\} \cap \{\ell,m\} =\{\ell\}$.  In this situation, $2m$ is adjacent to $2i-1,2j-1,2k-1$, and $2\ell$ is adjacent to two vertices among $\{2i-1,2j-1,2k-1\},$ see Figure \ref{fig:case2}. \end{minipage}\hspace{0.5in}
\begin{minipage}{0.35\linewidth}
\begin{figure}[H] \centering
     \begin{minipage}{\linewidth}
     \begin{tikzpicture}[scale=0.65]
\captionsetup{justification=centering,margin=2cm}
\filldraw (-1,4) circle(.3ex); 
  \filldraw (-1,2) circle(.3ex);
\filldraw (1,2) circle(.3ex);
\filldraw (2,4) circle(.3ex);
   \filldraw (-3,4) circle(.3ex);
\draw (-1,4)--(1,2);
\draw (-1,2)--(2,4);
\draw (-3,4)--(-1,2);
\draw (-3,4)--(1,2);
\draw (1,2)--(2,4);
    \node at (-1,4.3) {$2j-1$};
    \node at (-1,1.7) {$2\ell $};
      \node at (1,1.7) {$2m$};
        \node at (2,4.3) {$2k-1$};
          \node at (-3,4.3) {$2i-1$};
  \end{tikzpicture}
    \caption{}\label{fig:case2}
\end{minipage}
\end{figure} 
\end{minipage}
\end{minipage}
 \begin{minipage}{\linewidth}
  \begin{minipage}{0.6\linewidth} (3) Suppose $\{i,j,k\} \cap \{\ell,m\} =\{m\}$. In this situation, $2\ell$ is adjacent to $2i-1,2j-1,2k-1$, and $2m$ is adjacent to two vertices among $\{2i-1,2j-1,2k-1\}$, see Figure \ref{fig:case3}. \end{minipage}\hspace{0.5in}
\begin{minipage}{0.35\linewidth} \begin{figure}[H] \centering
    \begin{minipage}{\linewidth}
     \begin{tikzpicture}[scale=0.65]
\captionsetup{justification=centering,margin=2cm}
        \filldraw (0,4) circle(.3ex); 
  \filldraw (-2,2) circle(.3ex);
\filldraw (0,2) circle(.3ex);
\filldraw (2,4) circle(.3ex);
   \filldraw (-3,4) circle(.3ex);
\draw (0,4)--(-2,2);
\draw (-2,2)--(2,4);
\draw (-3,4)--(-2,2);
\draw (-3,4)--(0,2);
\draw (0,2)--(2,4);
    \node at (0,4.3) {$2j-1$};
    \node at (-2,1.7) {$2\ell $};
      \node at (0,1.7) {$2m$};
        \node at (2,4.3) {$2k-1$};
          \node at (-3,4.3) {$2i-1$};
  \end{tikzpicture}
    \caption{}\label{fig:case3}
\end{minipage}
\end{figure} 
\end{minipage}
\end{minipage}
     \begin{minipage}{\linewidth}
  \begin{minipage}{0.6\linewidth} (4) Suppose $\{i,j,k\} \cap \{\ell,m\} =\emptyset.$ In this case, $2i-1,2j-1,2k-1$ are adjacent to both $2\ell,2m$, see Figure \ref{fig:case4}. 
    \end{minipage}\hspace{0.5in}
\begin{minipage}{0.35\linewidth} \begin{figure}[H] \centering
    \begin{minipage}{\linewidth}
     \begin{tikzpicture}[scale=0.65]
\captionsetup{justification=centering,margin=2cm}
      \filldraw (-1,4) circle(.3ex); 
  \filldraw (-2,2) circle(.3ex);
\filldraw (0,2) circle(.3ex);
\filldraw (1,4) circle(.3ex);
   \filldraw (-3,4) circle(.3ex);
\draw (-1,4)--(0,2);
\draw (-2,2)--(1,4);
\draw (-3,4)--(-2,2);
\draw (-3,4)--(0,2);
\draw (0,2)--(1,4);
\draw (-1,4)--(-2,2);
    \node at (-1,4.3) {$2j-1$};
    \node at (-2,1.7) {$2\ell $};
      \node at (0,1.7) {$2m$};
        \node at (1,4.3) {$2k-1$};
          \node at (-3,4.3) {$2i-1$};
  \end{tikzpicture}
    \caption{}
    \label{fig:case4}
\end{minipage}
\end{figure} 
\end{minipage}
\end{minipage}

So, in all cases, the induced subgraph on $\{2i-1,2j-1,2k-1,2\ell, 2m\}$ is connected. Since every vertex $u \in Y \setminus T$ is adjacent to at least $|X \setminus T|-1$ vertices in $X\setminus T$, i.e., $u$ is adjacent to at least two vertices among $\{2i-1,2j-1,2k-1\}$ and every vertex $v \in X\setminus T $ is adjacent to at least $|Y\setminus T|-1$ vertices in $Y\setminus T$, i.e., $v$ is adjacent to  $2\ell$ or $2m,$  $G[\bar T]$ is a connected graph.
\end{proof}

Next, we use the above lemma to compute the vertex connectivity of crown graphs. 

\begin{proposition}\label{prop:graph-connectivity}
    Let $G=C_{n,n}$ with $n \ge 3$. Then, $\kappa(G)=n-1.$ 
\end{proposition}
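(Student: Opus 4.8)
The plan is to prove the two inequalities $\kappa(G)\le n-1$ and $\kappa(G)\ge n-1$ separately, directly from the definition of $k$-vertex-connectivity recalled above. The key structural fact I would exploit is that $C_{n,n}$ is $(n-1)$-regular: each vertex $2i-1$ is adjacent exactly to $\{2j:j\ne i\}$ and each vertex $2i$ exactly to $\{2j-1:j\ne i\}$. This suggests that the upper bound comes from isolating a single vertex by deleting its neighborhood, while the lower bound should follow immediately from Lemma~\ref{lem:tech-connected}.

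For the upper bound I would exhibit an explicit cut set of size $n-1$. Take the vertex $1\in X$ and set $A=N_G(1)=\{4,6,\ldots,2n\}$, so that $|A|=n-1<n$. In $G[\bar A]$ the vertex $1$ has lost all of its neighbors and becomes isolated, whereas the surviving even vertex $2$ remains adjacent to every odd vertex in $\{3,5,\ldots,2n-1\}$. Hence $G[\bar A]$ breaks into the two components $\{1\}$ and $\{2\}\cup(X\setminus\{1\})$, so $G$ is not $n$-vertex-connected and $\kappa(G)\le n-1$.

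For the lower bound I would show that $G$ is $(n-1)$-vertex-connected, i.e.\ that $G[\bar T]$ is connected for every $T\subseteq[2n]$ with $|T|\le n-2$. Writing $a=|X\cap T|$ and $b=|Y\cap T|$, we have $a,b\le|T|\le n-2$, so $|X\setminus T|=n-a\ge 2$ and $|Y\setminus T|=n-b\ge 2$; moreover $|X\setminus T|+|Y\setminus T|=2n-|T|\ge n+2\ge 5$ since $n\ge 3$. Thus all three hypotheses of Lemma~\ref{lem:tech-connected} are met, and the lemma yields that $G[\bar T]$ is connected. Together with $n-1<2n=|V(G)|$, this gives $\kappa(G)\ge n-1$, and combining the two bounds completes the proof.

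I do not expect a serious obstacle here: the entire combinatorial difficulty has already been absorbed into Lemma~\ref{lem:tech-connected}, so the two remaining tasks are routine. The only points demanding care are the index bookkeeping for the cut set $A=N_G(1)$ in the upper bound (confirming that $1$ is genuinely isolated while the rest stays connected through the surviving even vertex $2$), and the verification that the bound $|T|\le n-2$ forces each of $|X\setminus T|$ and $|Y\setminus T|$ to be at least $2$, so that the lemma applies verbatim.
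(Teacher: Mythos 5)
Your proposal is correct and follows essentially the same route as the paper: the lower bound $\kappa(G)\ge n-1$ via Lemma~\ref{lem:tech-connected} with the identical counting $|X\setminus T|,|Y\setminus T|\ge 2$ and $|X\setminus T|+|Y\setminus T|=2n-|T|\ge n+2\ge 5$, and the upper bound by deleting the neighborhood of a single vertex to isolate it (you remove $N_G(1)\subseteq Y$, the paper removes $X\setminus\{2i-1\}=N_G(2i)$, which is the same construction up to the bipartition symmetry). No gaps; your verification that the remainder stays connected through the one surviving vertex of the opposite side matches the paper's Figure~\ref{fig:kappa} argument.
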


\begin{proof}
Let $T \subseteq [2n]$. Suppose that $\vert T\vert \le n-2$, then  $\vert X \setminus T\vert + \vert Y \setminus T \vert\ge n+2\ge 5$. Moreover, $|X \setminus T | \ge 2$ and $|Y \setminus T| \ge 2$. Thus, by Lemma \ref{lem:tech-connected}, $G[\bar T]$ is a connected graph. This implies that if we remove fewer than $n-1$ vertices from $G$, the remaining graph is still connected. \begin{minipage}{\linewidth}
  \begin{minipage}{0.5\linewidth}On the other hand, if we remove all the vertices of $X=\{1,3, \ldots,2n-1\}$ except one, say $2i-1$ for some $i \in [n]$, then $2i$ is not adjacent to any vertex in the remaining graph, i.e., the remaining graph is disconnected, see Figure \ref{fig:kappa}. Thus, the removal of a set of $n-1$ vertices makes the graph disconnected. 
\end{minipage}\hspace{0.6in}
\begin{minipage}{0.35\linewidth} \begin{figure}[H] \centering
    \begin{minipage}{\linewidth}
     \begin{tikzpicture}[scale=0.85]
\captionsetup{justification=centering,margin=2cm}        \filldraw (-1,4) circle(.3ex); 
        \filldraw (-1,2) circle(.3ex);
        \filldraw (-2,2) circle(.3ex);
        \filldraw (-4,2) circle(.3ex);
        \filldraw (0,2) circle(.3ex);
        \filldraw (2,2) circle(.3ex);
        \draw (-1,4)--(-4,2);
        \draw (-1,4)--(0,2);
        \draw (-1,4)--(-2,2);
        \draw (-1,4)--(2,2);
        \node at (-4,1.7) {$2$};
        \node at (-1,4.3) {$2i-1$};
        \node at (-2,1.7) {$2i-2$};
        \node at (-1,1.7) {$2i$};
        \node at (1,2) {$\ldots$};
        \node at (-3,2) {$\ldots$};
        \node at (0,1.7) {$2i+2$};
        \node at (2,1.7) {$2n$};
    \end{tikzpicture}
    \caption{}
    \label{fig:kappa}
\end{minipage}
\end{figure} 
\end{minipage}
\end{minipage} Hence, $\kappa(G) = n-1$. 
\end{proof}

Now, we give the complete description of sets of crown graphs with the cut-point property. 

\begin{theorem}\label{thm:cut-point-prop}
Let $G=C_{n,n}$ with $n \ge 3$. Let $T \subseteq [2n]$ be a nonempty set. 
     \begin{enumerate}
    \item[$(1)$] If $T \subseteq X$, then $T \in \mathcal{C}(G)$ if and only if either $T=X$ or $T=X\setminus \{2i-1\}$ for some $ i \in [n].$
    \item[$(2)$] If $T \subseteq Y$, then $T \in \mathcal{C}(G)$ if and only if either $T=Y$ or $T=Y\setminus \{2i\}$ for some $i \in [n].$
    \item[$(3)$]  If $T \not\subseteq X$ and $T \not\subseteq Y$, then $T  \in \mathcal{C}(G)$ if and only if  $T=A \cup \{ i+1~:~ i\in A\}$ for some $A \subseteq X$ with $|A|=n-2$. 
\end{enumerate}
    \end{theorem}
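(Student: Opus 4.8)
The plan is to classify the cut-point sets $T$ of $C_{n,n}$ by intersecting with the two sides $X$ and $Y$ of the bipartition, handling each of the three cases separately. Throughout I will use the characterization recalled before the theorem: a nonempty $T$ lies in $\mathcal{C}(G)$ if and only if every $i \in T$ is a cut vertex of $G[\bar T \cup \{i\}]$, equivalently $c_G(T) > c_G(T \setminus \{i\})$. The key structural tool is Lemma~\ref{lem:tech-connected}, which guarantees connectivity of $G[\bar T]$ whenever both sides retain at least two vertices and the total surviving vertex count is at least $5$; contrapositively, a set $T$ can only create several components by driving one of the two sides down to at most one surviving vertex, or by leaving very few vertices overall.

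For case $(1)$, where $T \subseteq X$, note that removing only $X$-vertices leaves all of $Y$ and some $X$-vertices. If $\vert X \setminus T\vert \ge 2$ then every surviving $Y$-vertex still has a neighbor, and one checks directly (or via the connectivity lemma when enough vertices survive) that $G[\bar T]$ stays connected, so $T \notin \mathcal{C}(G)$ unless it shrinks a side. The genuinely disconnecting situations are $T = X$ (then $G[\bar T]$ is $n$ isolated $Y$-vertices) and $T = X \setminus \{2i-1\}$ (then $2i$ becomes isolated while the rest stays connected, as in Figure~\ref{fig:kappa}). I would then verify the cut-point condition for each remaining element: in $T = X$ each $2j-1$ is a cut vertex of $G[\bar T \cup \{2j-1\}]$ because reinserting it links up the otherwise isolated $Y$-vertices it dominates, and similarly for $T = X \setminus \{2i-1\}$. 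Case $(2)$ is identical after exchanging the roles of $X$ and $Y$.

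For case $(3)$, where $T$ meets both $X$ and $Y$, write $a := \vert X \setminus T\vert$ and $b := \vert Y \setminus T\vert$. By Lemma~\ref{lem:tech-connected}, if $a \ge 2$, $b \ge 2$, and $a + b \ge 5$ then $G[\bar T]$ is connected, forcing disconnection only through the boundary regime. The crucial observation is the \emph{matched-pair} phenomenon specific to crown graphs: vertex $2i$ is adjacent to every $2j-1$ with $j \ne i$ but \emph{not} to $2i-1$, so the pair $\{2i-1, 2i\}$ behaves antipodally. Removing the complement matched pairs and leaving exactly two matched pairs $\{2p-1,2p\}$ and $\{2q-1,2q\}$ produces exactly the disconnected configuration, since $2p-1$ is isolated from $2p$ and from $2q$ is adjacent only across — I would show that the surviving graph splits into the two nonadjacent components $\{2p-1, 2q\}$ and $\{2q-1, 2p\}$, giving $c_G(T) = 2$. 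This is exactly the set $T = A \cup \{i+1 : i \in A\}$ with $A \subseteq X$ and $\vert A\vert = n-2$, i.e.\ $T$ consists of $n-2$ complete matched pairs.

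The main obstacle will be the forward direction of case $(3)$: showing that \emph{no other} mixed set has the cut-point property. Here I would argue that if $T$ contains an $X$-vertex $2i-1$ but not its partner $2i$ (an ``unmatched'' removal), then $2i-1$ fails to be a cut vertex of $G[\bar T \cup \{2i-1\}]$, because $2i-1$ has high degree into the surviving $Y$-side and its reinsertion cannot reduce the component count — the surviving graph is already glued together through the abundant crown adjacencies. Quantifying this requires a careful count: I must rule out the boundary cases $a = 1$ or $b = 1$ (where one side nearly vanishes but some vertex of $T$ is still not a cut vertex) and confirm that only the fully-matched configuration with exactly two surviving pairs satisfies the cut-point condition at every element of $T$. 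The cleanest route is to show that whenever $T$ is mixed and has the cut-point property, every element of $T$ must be matched (its partner also in $T$), which combined with the connectivity lemma pins down $\vert T\vert = 2(n-2)$ and hence the stated form; establishing that matching is forced is the delicate step and will rely on the antipodal adjacency structure together with the cut-vertex criterion applied element by element.
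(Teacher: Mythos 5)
Your proposal is correct and takes essentially the same route as the paper's proof: split into the three cases by how $T$ meets $X$ and $Y$, use Lemma~\ref{lem:tech-connected} to force any disconnecting $T$ into the boundary regime, exclude $|X\setminus T|\le 1$ and $|Y\setminus T|\le 1$ by exhibiting an element of $T$ of degree at most one upon reinsertion, force the matched-pair form $T=A\cup\{a+1: a\in A\}$ when $|X\setminus T|=|Y\setminus T|=2$, and verify the cut-point property element by element in the converse (where, as in the paper, $G[\bar{T}\cup\{u\}]$ is a five-vertex path with $u$ in the middle). Your one loose phrase is that an unmatched $2i-1\in T$ fails to be a cut vertex ``because it has high degree into the surviving $Y$-side'' --- in the relevant regime it has degree at most two --- but your accompanying justification, that for an unmatched configuration $G[\bar{T}]$ is already connected so $c_G(T)=1$ and no element of $T$ can satisfy the cut-point criterion, is valid and is equivalent to (indeed slightly streamlines) the paper's argument, which instead finds a vertex $2j\in T\cap Y$ with $2j-1\notin T$ of degree one in $G[\bar{T}\cup\{2j\}]$.
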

    \begin{proof} Let $T \subseteq [2n]$ be a nonempty set. \par (1) Assume that $T \subseteq X.$ If $|T| \le n-2$, then as we observe in the proof of Proposition \ref{prop:graph-connectivity}, $G[\bar T]$ is a connected graph. Thus, if $T \in \mathcal{C}(G)$, then $|T| \ge n-1,$ i.e., either $T=X$ or $T=X \setminus \{i\}$ for some $i \in X.$ 
        
        Conversely, suppose that $T=X$. For every $ i \in [n]$, $G[ \bar {T} \cup \{2i-1\}]$ is the graph shown in Figure \ref{fig:kappa}. It is clear from Figure \ref{fig:kappa} that $2i-1$ is a cut vertex in $G[ \bar {T} \cup \{2i-1\}]$. Thus, $T=X$ has the cut point property, hence $T \in \mathcal{C}(G).$ 
        
 \begin{minipage}{\linewidth}
  \begin{minipage}{0.45\linewidth} Next, suppose $T=X\setminus \{2i-1\}$ for some $i \in [n].$ Then, for every $ 2j -1 \in T$, $G[ \bar {T} \cup \{2j-1\}]$ is similar to the graph shown in Figure \ref{fig:cutset}. Notice that $2j-1$ disconnects $2i-1$ and $2i$, see Figure \ref{fig:cutset}. Therefore, $2j-1$ is a cut vertex in $G[ \bar {T} \cup \{2j-1\}]$. Hence, $T=X\setminus \{2i-1\}$ has the cut point property, i.e., $T \in \mathcal{C}(G).$ \end{minipage}\hspace{0.2in}
\begin{minipage}{0.35\linewidth} \begin{figure}[H] \centering
    \begin{minipage}{\linewidth}
     \begin{tikzpicture}[scale=0.9]
\captionsetup{justification=centering,margin=2cm}
        \filldraw (-1,4) circle(.3ex); 
        \filldraw (-1,2) circle(.3ex);
        \filldraw (-2,2) circle(.3ex);
        \filldraw (-4,2) circle(.3ex);
        \filldraw (0,2) circle(.3ex);
        \filldraw (2,2) circle(.3ex);
\filldraw (-6,2) circle(.3ex);
\filldraw (-4,4) circle(.3ex);

        \draw (-1,4)--(-4,2);
        \draw (-1,4)--(0,2);
        \draw (-1,4)--(-2,2);
        \draw (-1,4)--(2,2);
        \draw (-1,4)--(-6,2);
        \draw[thick, loosely dashed,line width=1.5pt] (-4,4)--(-6,2);
        \draw[thick, loosely dashed,line width=1.5pt] (-4,4)--(-2,2);
        \draw[thick, loosely dashed,line width=1.5pt] (-4,4)--(-1,2);
        \draw[thick, loosely dashed,line width=1.5pt] (-4,4)--(2,2);
        \draw[thick, loosely dashed,line width=1.5pt] (-4,4)--(0,2);
        \draw[dashed] (-0.25,3.75) -- (2,3.75);
        \draw[dashed] (-0.25,3.75) arc [start angle=0, end angle=180, x radius=0.75cm, y radius=1cm];
        \draw[dashed] (-1.75,3.75) -- (-6,3.75);
        \draw[dashed] (2,5) -- (-6,5);

\node at (-4,1.7) {$2j$};
\node at (-2.5, 4.5) {$T$};
\node at (-4,4.3) {$2j-1$};
        \node at (-6,1.7) {$2$};
        \node at (-1,4.3) {$2i-1$};
        \node at (-2,1.7) {$2i-2$};
        \node at (-1,1.7) {$2i$};
        \node at (1,2) {$\ldots$};
        \node at (-5,2) {$\ldots$};
        \node at (-3,2) {$\ldots$};
        \node at (0,1.7) {$2i+2$};
        \node at (2,1.7) {$2n$};
    \end{tikzpicture}
    \caption{}
    \label{fig:cutset}
\end{minipage}
\end{figure} 
\end{minipage}
\end{minipage}

        \par (2) The proof follows in the same lines as the proof of part $(1)$ just by replacing the role of $X$ with the role of $Y$.
        \par (3)  First, assume that $T \not\subseteq X$, $T \not\subseteq Y$ and $T \in \mathcal{C}(G)$. We claim that $\vert X \setminus T\vert = 2 = \vert Y \setminus T\vert$. Suppose $|X\setminus T| \ge 2$, $|Y \setminus T| \ge 2$ with $|X\setminus T|+|Y \setminus T|\ge 5$, then $G[\bar T]$ is a connected graph, i.e., $T \not\in \mathcal{C}(G)$ which is a contradiction. Furthermore, if $\vert X\setminus T\vert = 1$,  i.e., there 
        \begin{minipage}{\linewidth}
  \begin{minipage}{0.5\linewidth} exists a unique $i \in [n]$ such that $2i -1\not\in T$, then for any $2j \in T$, the degree of $2j$ in $G[\bar T \cup \{2j\}]$ is at most one, see Figure~\ref{fig:Y(T)=1}, i.e., $2j$ is not  a cut vertex of $G[\bar T \cup \{2j\}]$, which is a contradiction to the fact that $T \in \mathcal{C}(G).$ Thus, $|X\setminus T|  \neq 1$. Similarly, $|Y \setminus T|\neq 1.$ This all together implies that  $|X \setminus T |=2=|Y \setminus T|$. \end{minipage}\hspace{0.15in}
\begin{minipage}{0.35\linewidth} \begin{figure}[H] \centering
    \begin{minipage}{\linewidth}
     \begin{tikzpicture}[scale=0.85]
\captionsetup{justification=centering,margin=2cm}
  \filldraw (-1,4) circle(.3ex); 
    \filldraw (4,2) circle(.3ex); 
        \filldraw (-1,2) circle(.3ex);
        \filldraw (-2,2) circle(.3ex);
        \filldraw (-4,2) circle(.3ex);
        \filldraw (0,2) circle(.3ex);
        \filldraw (2,2) circle(.3ex);
        \draw (-1,4)--(-4,2);
        \draw (-1,4)--(0,2);
        \draw (-1,4)--(-2,2);
        \draw [thick, loosely dashed,line width=1.5pt](-1,4)--(2,2);
        \draw (4,2)--(-1,4);
         \draw[dashed] (-.2,3.9) arc [start angle=0, end angle=180, x radius=.8cm, y radius=1.0cm];
         \draw[dashed](-1.8,3.9)--(-4.0,3.9);
         \draw[dashed](4.0,3.9)--(-.2,3.9);
        \node at (4,1.7) {$2n$};
        \node at (-4,1.7) {$2$};
        \node at (-1,4.3) {$2i-1$};
       \node at (1,4.5) {$T$};
        \node at (-2.25,1.7) {$2i-2$};
        \node at (-1,1.7) {$2i$};
        \node at (1,2) {$\ldots$};
        \node at (3,2) {$\ldots$};
        \node at (-3,2) {$\ldots$};
        \node at (.25 ,1.7) {$2i+2$};
        \node at (2,1.7) {$2j$};
    \end{tikzpicture}
    \caption{}
        \label{fig:Y(T)=1}
\end{minipage}
\end{figure} 
\end{minipage}
\end{minipage} 
\begin{minipage}{\linewidth}
\begin{minipage}{0.45\linewidth} \begin{figure}[H] \centering
    \begin{minipage}{\linewidth}
     \begin{tikzpicture}[scale=0.9]
\captionsetup{justification=centering,margin=2cm}
\filldraw (0,4) circle(.3ex); 
         \filldraw (1,2) circle(.3ex);
         \filldraw (-3.4,2) circle(.3ex);
         \filldraw (-2.4,2) circle(.3ex);
         \filldraw (-3.4,4) circle(.3ex);
          \filldraw (-2.4,4) circle(.3ex); 
           \filldraw (-1.4,4) circle(.3ex); 
            \filldraw (2.2,4) circle(.3ex);
             \filldraw (3.4,4) circle(.3ex); 
         \filldraw (3.4,2) circle(.3ex);
          \filldraw (1,4) circle(.3ex); 
\filldraw (0,2) circle(.3ex);
\filldraw (-1.4,2) circle(.3ex);
   \filldraw (2.2,2) circle(.3ex);
\draw (0,4)--(-1.4,2);
\draw (0,4)--(2.2,2);
\draw (1,4)--(2.2,2);
\draw (1,4)--(-1.4,2);
\draw [thick, loosely dashed,line width=1.5pt] (1,4)--(0,2);
\node at (-.2,4.3) {$2j-1$};
   \node at (-1.4,1.7) {$2i$};
   \node at (1.1,4.3) {$2\ell-1$};
   \node at (0,1.7) {$2j$};
   \node at(-.8, 4.8) {$T$};
   \node at(.5, 1.4) {$T$};
          \node at (2.2,1.7) {$2k$};
        \draw[dashed] (1.8,3.75) -- (3.6,3.75);
        \draw[dashed] (-0.7,2.25) -- (1.7,2.25);
        \draw[dashed] (2.8,2.25) -- (3.6,2.25);
        \draw[dashed] (1.8,3.75) arc [start angle=0, end angle=180, x radius=1.4cm, y radius=1.25cm];
        \draw[dashed] (-0.7,2.25) arc [start angle=0, end angle=-180, x radius=.5cm, y radius=1cm];
        \draw[dashed] (1.7,2.25) arc [start angle=-180, end angle=0, x radius=.5cm, y radius=1cm];
        \draw[dashed] (-1.7,2.25) -- (-3.6,2.25);
        \draw[dashed] (-1.0,3.75) -- (-3.6,3.75);
        \draw[dashed] (3.6,5.2) -- (-3.6,5.2);
        \draw[dashed] (3.6,1) -- (-3.6,1);
  \end{tikzpicture}
    \caption{}
    \label{fig:bat}
    \end{minipage}
\end{figure} 
\end{minipage} \hspace{0.15in}
\begin{minipage}{0.5\linewidth} 
    Next, suppose there exist $1 \le i<j \le n$ so that $2i-1, 2j \in T$ and $2i, 2j-1 \not\in T$. Then, the degree of $2j$ in $G[\bar T \cup \{2j\}]$ is one, see Figure~\ref{fig:bat}. Therefore, $2j$ is not a cut vertex in $G[\bar T \cup \{2j\}],$ which is a contradiction. Thus, $T=A \cup \{a+1~:~ a \in A\}$ for some $A \subseteq X$ with $|A|=n-2.$ 
   \end{minipage}
\end{minipage}      
\begin{minipage}{\linewidth}
  \begin{minipage}{0.5\linewidth}
  Conversely, if $T=A \cup \{a+1~:~ a \in A\}$ for some $A \subseteq X$ with $|A|=n-2.$ Let $u\in T$ be any vertex. Then, $G[\bar T \cup \{u\}]$ is a path graph on five vertices, and $u$ is the middle vertex of that path, see Figure \ref{A and A+1}. Therefore, $u$ is a cut vertex of $G[\bar T \cup \{u\}]$. Thus, $T$ has the cut point property; hence, $T \in \mathcal{C}(G).$\end{minipage}\hspace{0.15in}
\begin{minipage}{0.35\linewidth} \begin{figure}[H] \centering
    \begin{minipage}{\linewidth}
     \begin{tikzpicture}[scale=0.85]
\captionsetup{justification=centering,margin=2cm}
        \filldraw (-1.5,4) circle(.3ex); 
        \filldraw (0,4) circle(.3ex); 
        \filldraw (1.5,4) circle(.3ex); 
        \filldraw (-3,4) circle(.3ex); 
        \filldraw (3,4) circle(.3ex); 
        \filldraw (0,2) circle(.3ex);
        \filldraw (-1.5,2) circle(.3ex);
        \filldraw (-3,2) circle(.3ex);
        \filldraw (1.5,2) circle(.3ex);
        \filldraw (3,2) circle(.3ex);
        
        \draw[dashed] (-0.5,3.75) -- (.5,3.75);
        \draw[dashed] (2.5,3.75) -- (4,3.75);
        \draw[dashed] (-0.5,2.25) -- (.5,2.25);
        \draw[dashed] (2.5,2.25) -- (4,2.25);
        \draw[dashed] (-.5,3.75) arc [start angle=0, end angle=180, x radius=1cm, y radius=1cm];
        \draw[dashed] (.5,3.75) arc [start angle=180, end angle=0, x radius=1cm, y radius=1cm];
        \draw[dashed] (-.5,2.25) arc [start angle=0, end angle=-180, x radius=1cm, y radius=1cm];
        \draw[dashed] (.5,2.25) arc [start angle=-180, end angle=0, x radius=1cm, y radius=1cm];
        \draw[dashed] (-2.5,2.25) -- (-4,2.25);
        \draw[dashed] (-2.5,3.75) -- (-4,3.75);
        \draw[dashed] (4,5) -- (-4,5);
        \draw[dashed] (4,1) -- (-4,1);

        \draw (1.5,4) -- (-1.5,2);
        \draw [thick, loosely dashed, line width=1.5pt] (0,4) -- (-1.5,2);
       
        \draw [thick, loosely dashed,line width=1.5pt] (0,4) -- (1.5,2);
        \draw (-1.5,4) -- (1.5,2);

        \node at (0,4.3) {$u$};
        \node at (-1.5,4.3) {$2i-1$};
        \node at (-1.5,1.7) {$2i$};
        \node at (1.5,4.3) {$2j-1$};
        \node at (1.5,1.7) {$2j$};
        \node at (0,1.5) {A+1};
        \node at (0,4.7) {A};
    \end{tikzpicture}
    \caption{}
    \label{A and A+1}
\end{minipage}
\end{figure} 
\end{minipage}
\end{minipage} 
        Hence, the assertion follows.
    \end{proof}

Now, using the above description of the sets with the cut point property, we compute the Krull dimension of the quotient ring of the binomial edge ideals of the crown graphs.
\begin{theorem}\label{thm:dim} 
Let $G=C_{n,n}$ with $n \ge 3$. Then,  $\dim\left(\frac{S}{J_{C_{n,n}}}\right) = 2n + 1$. Moreover, \begin{enumerate}
    \item[$(1)$] if $T=\emptyset$, then $\dim\left(\frac{S}{P_{T}(G)}\right) = 2n + 1$. 
    \item[$(2)$] if $T=X$ or $T=Y$, then $\dim\left(\frac{S}{P_{T}(G)}\right) = 2n$. 
    \item[$(3)$] if $T=X\setminus \{2i-1\}$ or $T=Y\setminus \{2j\}$ for some $ i, j \in [n]$, then $\dim\left(\frac{S}{P_{T}(G)}\right) = n+3$. 
    \item[$(4)$] if $T=A\cup \{a+1~:~a \in A\}$ with $A \subseteq X$ and $|A|=n-2$, then $\dim\left(\frac{S}{P_{T}(G)}\right) = 6$.
\end{enumerate}
\end{theorem}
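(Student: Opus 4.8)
The plan is to reduce everything to a single dimension formula for the prime $P_T(G)$ and then read off each case from the classification in Theorem \ref{thm:cut-point-prop}. The engine is the observation that the generators of $P_T(G)$ split into variable-disjoint blocks: the linear forms $\{x_i,y_i : i\in T\}$, which simply delete the $2|T|$ variables indexed by $T$, and the binomial edge ideals $J_{\tilde G_1},\dots,J_{\tilde G_{c_G(T)}}$ of the complete graphs on the vertex sets of the connected components of $G[\bar T]$, which live in pairwise disjoint sets of variables. Hence $S/P_T(G)$ is a tensor product over $\KK$ of the rings $\KK[x_v,y_v : v\in V(G_\ell)]/J_{\tilde G_\ell}$, and since the binomial edge ideal of the complete graph $K_m$ is the ideal of $2\times 2$ minors of a generic $2\times m$ matrix, whose quotient has dimension $m+1$, I get the formula $\dim\big(S/P_T(G)\big)=\sum_{\ell=1}^{c_G(T)}\big(|V(G_\ell)|+1\big)=|\bar T|+c_G(T)=(2n-|T|)+c_G(T)$.

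With this formula in hand, each of the four cases reduces to counting $|T|$ and the number of connected components $c_G(T)$ of $G[\bar T]$, all of which are already visible from the proof of Theorem \ref{thm:cut-point-prop}. For $T=\emptyset$ the graph $G[\bar T]=C_{n,n}$ is connected, so $c_G(T)=1$ and $|\bar T|=2n$, giving $2n+1$. For $T=X$ we have $\bar T=Y$, which is an independent set of $n$ vertices (the crown graph is bipartite with parts $X,Y$), so $c_G(T)=n$ and $|\bar T|=n$, giving $2n$; the case $T=Y$ is identical after exchanging the roles of $X$ and $Y$. For $T=X\setminus\{2i-1\}$ we have $\bar T=Y\cup\{2i-1\}$, where $2i-1$ is adjacent to every vertex of $Y\setminus\{2i\}$ while $2i$ is isolated; thus $G[\bar T]$ has the two components $\{2i-1\}\cup(Y\setminus\{2i\})$ and $\{2i\}$, so $c_G(T)=2$ and $|\bar T|=n+1$, giving $n+3$ (again $T=Y\setminus\{2j\}$ is symmetric). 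Finally, for $T=A\cup\{a+1 : a\in A\}$ with $|A|=n-2$ the set $\bar T$ consists of the four vertices $\{2i-1,2i,2j-1,2j\}$ indexed by the two elements of $X$ missing from $A$, and the only edges are $\{2i-1,2j\}$ and $\{2j-1,2i\}$, so $G[\bar T]$ is a disjoint union of two edges; hence $c_G(T)=2$ and $|\bar T|=4$, giving $6$. This establishes parts $(1)$--$(4)$.

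For the Krull dimension of $S/J_{C_{n,n}}$ itself, I would use $J_{C_{n,n}}=\bigcap_{T\in\mathcal C(G)}P_T(G)$ together with the classification of $\mathcal C(G)$ in Theorem \ref{thm:cut-point-prop}: the sets with the cut point property are exactly $\emptyset$ and the sets listed in cases $(1)$--$(4)$ of that theorem. Since the dimension of the quotient by a radical ideal is the maximum of the dimensions of the quotients by its minimal primes, $\dim(S/J_{C_{n,n}})=\max_{T\in\mathcal C(G)}\dim(S/P_T(G))$ is the largest of the four values $2n+1,\,2n,\,n+3,\,6$. For $n\ge 3$ one has $2n+1>2n$, $2n+1-(n+3)=n-2\ge 1$, and $2n+1\ge 7>6$, so the maximum is $2n+1$, attained at $T=\emptyset$.

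The computations are all routine once the formula is set up; the only place that needs care is the component count $c_G(T)$ in each case, which is precisely the combinatorial content already extracted in Lemma \ref{lem:tech-connected} and the proof of Theorem \ref{thm:cut-point-prop}. So the main (and rather minor) obstacle is to confirm the induced-subgraph pictures---that $G[\bar T]$ has $n$, $2$, and $2$ components in cases $(2)$, $(3)$, $(4)$ respectively---after which the four dimensions and their maximum follow immediately.
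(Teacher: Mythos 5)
Your proposal is correct and follows essentially the same route as the paper: both rest on the formula $\dim\left(\frac{S}{P_T(G)}\right)=2n-\vert T\vert+c_G(T)$, compute $c_G(T)$ case by case from the classification in Theorem \ref{thm:cut-point-prop}, and take the maximum over $\mathcal{C}(G)$. The only difference is cosmetic: you rederive that formula from the variable-disjoint block structure of $P_T(G)$ and the dimension $m+1$ of the rank-one locus of a generic $2\times m$ matrix, whereas the paper simply cites \cite[Lemma~3.1]{HHHKR10}, which encapsulates exactly this argument.
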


\begin{proof} It follows from \cite[Lemma~3.1]{HHHKR10} that for any $T \subseteq [2n]$, $$\dim\left(\frac{S}{P_{T}(G)}\right) = 2n-|T|+c_G(T).$$
\par (1)  Since $T=\emptyset$,  $c_G(T)=1.$ Thus, $\dim\left(\frac{S}{P_{T}(G)}\right) = 2n-0+1=2n+1.$
 \par (2) Assume $T=X$ (the case for $T=Y$ is similar). Then, $G[\bar T]$ is a graph on $n$ vertices with no edges. So, $c_G(T)=n,$ and hence, $\dim\left(\frac{S}{P_{T}(G)}\right) = 2n-n+n=2n.$
 \par (3) Assume $T=X \setminus \{2i-1\}$ for some $i \in [n]$ (the case for $T=Y\setminus \{2j\}$ for some $j \in [n]$ is similar). Then, $G[\bar T]$ has two connected components; see Figure \ref{fig:cutset}. Therefore, $\dim\left(\frac{S}{P_{T}(G)}\right) = 2n-(n-1)+2=n+3.$
\par (4) Assume $T=A\cup \{a+1~:~ a\in A\}$, where $A\subseteq X$ and $|A|=n-2$. Then, $G[\bar{T}]$ has two connected components (see Figure \ref{A and A+1}). Thus, $\dim\left(\frac{S}{P_{T}(G)}\right) = 2n-2(n-2)+2=6$.

By Theorem \ref{thm:cut-point-prop}, we have a complete description of elements of $\mathcal{C}(G)$. Since $\dim\left(\frac{S}{J_G}\right) =\max\left\{\dim\left(\frac{S}{P_{T}(G)}\right)~:~ T \in \mathcal{C}(G)\right\}$, we get that $ \dim\left(\frac{S}{J_G}\right) =2n+1,$ as desired. 
\end{proof}

 Using Theorem~\ref{thm:dim}, we immediately determine the big height of the binomial edge ideals of crown graphs. The \textit{big height} of an ideal is the largest height of the minimal primes of that ideal.
 
 \begin{corollary}\label{cor:height}
     Let $G=C_{n,n}$ with $n \ge 3$. Then, $\height(J_G)=2n-1$ and $\bigheight(J_G)=4n-6.$
 \end{corollary}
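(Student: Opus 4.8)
The plan is to deduce both equalities directly from Theorem~\ref{thm:dim}, using only the fact that $S$ is a polynomial ring over a field and hence Cohen--Macaulay. Since the crown graph $C_{n,n}$ has $2n$ vertices, the ring $S=\KK[x_1,\ldots,x_{2n},y_1,\ldots,y_{2n}]$ has $4n$ variables, so $\dim S = 4n$, and for every prime ideal $P$ of $S$ one has $\height(P) = \dim S - \dim(S/P) = 4n - \dim(S/P)$.

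For the ordinary height I would use $\height(J_G) = \min\{\height(P) : P \in \Min(J_G)\}$. Because $\height(P) = 4n - \dim(S/P)$ is a decreasing function of the quotient dimension, this minimum is attained at the minimal prime of largest coheight; that is, $\height(J_G) = 4n - \dim(S/J_G)$. By Theorem~\ref{thm:dim} we have $\dim(S/J_G) = 2n+1$, whence $\height(J_G) = 4n - (2n+1) = 2n-1$.

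For the big height I would dually use $\bigheight(J_G) = \max\{\height(P) : P \in \Min(J_G)\} = 4n - \min\{\dim(S/P_T(G)) : T \in \mathcal{C}(G)\}$, the last equality because by \cite[Corollary~3.9]{HHHKR10} the minimal primes of $J_G$ are precisely the ideals $P_T(G)$ with $T \in \mathcal{C}(G)$. Theorem~\ref{thm:cut-point-prop} enumerates all such $T$, and Theorem~\ref{thm:dim} records the corresponding values of $\dim(S/P_T(G))$ as $2n+1$ (for $T=\emptyset$), $2n$ (for $T=X,Y$), $n+3$ (for $T=X\setminus\{2i-1\}$ or $T=Y\setminus\{2j\}$), and $6$ (for the sets $A\cup\{a+1:a\in A\}$ with $|A|=n-2$). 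It then remains to check that $6$ is the smallest of these for every $n\ge 3$, which follows from the elementary inequalities $6\le n+3$, $6\le 2n$, and $6\le 2n+1$ valid for all $n\ge 3$. Hence $\bigheight(J_G) = 4n-6$.

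The proof carries no real obstacle, since the combinatorial and dimension-theoretic work is already complete in Theorems~\ref{thm:cut-point-prop} and~\ref{thm:dim}. The only point requiring a moment's care is verifying that the sets of the form $A\cup\{a+1:a\in A\}$ genuinely minimize the quotient dimension for all $n\ge3$ (and not merely for large $n$), which is exactly the content of the three inequalities above.
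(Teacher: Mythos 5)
Your proposal is correct and takes essentially the same route as the paper: both arguments compute $\height(J_G)=\dim(S)-\dim\left(\frac{S}{J_G}\right)=4n-(2n+1)=2n-1$ and $\bigheight(J_G)=\dim(S)-\min\left\{\dim\left(\frac{S}{P_T(G)}\right)~:~T\in\mathcal{C}(G)\right\}=4n-6$ directly from Theorem~\ref{thm:dim} together with the classification of $\mathcal{C}(G)$ in Theorem~\ref{thm:cut-point-prop}. Your explicit verification that $6$ is the smallest of the values $2n+1$, $2n$, $n+3$, $6$ for all $n\ge 3$ simply spells out a step the paper's proof leaves implicit.
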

 \begin{proof}
 It is well known that for a homogenous  ideal $I$ in $S$, $\height(I)=\dim(S)-\dim\left(\frac{S}{I}\right).$ We know that $\dim(S)=4n$. Therefore,  $$\height(J_G)=\dim(S)-\dim\left(\frac{S}{J_G}\right)=4n-(2n+1)=2n-1$$ and \begin{align*}
     \bigheight(J_G)&=\max\left\{ \height(P_T(G))~:~ T \in \mathcal{C}(G)\right\}\\&=\max\left\{\dim(S)-\dim\left(\frac{S}{P_T(G)}\right)~:~T \in \mathcal{C}(G)\right\}\\& =\dim(S)-\min\left\{\dim\left(\frac{S}{P_T(G)}\right)~:~T \in \mathcal{C}(G)\right\}\\&=4n-6.
 \end{align*} Hence, the assertion follows. 
 \end{proof}

\section{Projective dimension and big height of binomial edge ideals}\label{depth} The projective dimension of the quotient $\frac{S}{I}$ of a homogeneous ideal $I \subset S$ is known to be greater than or equal to the big height of $I$. When $\frac{S}{I}$ is Cohen-Macaulay, then the projective dimension of $\frac{S}{I}$ is the same as the $\bigheight(I)$. We want to understand which class of ideals(other than Cohen-Macaulay) has the same projective dimension as the big height of $I$. Specifically, we are looking at this question in the context of binomial edge ideals. In this section, we will explore this question for binomial edge ideals associated with various classes of graphs. 

The first known result about the projective dimension of binomial edge ideals is attributed to Ene, Herzog, and Hibi \cite{EHH11}. They investigated the Cohen-Macaulayness of binomial edge ideals of block graphs. As a first step in this direction, we verify that the quotient of binomial edge ideals of block graphs has the same projective dimension as the big height.

\begin{proposition}
    Let $G$ be a connected block graph on the vertex set $[n]$. Then, $$\pd\left(\frac{S}{J_G}\right)=\bigheight(J_G)=n-1.$$
\end{proposition}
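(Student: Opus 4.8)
The plan is to prove the two equalities $\bigheight(J_G)=n-1$ and $\pd\left(\frac{S}{J_G}\right)=n-1$ and then to glue them using the standard inequality $\pd\left(\frac{S}{J_G}\right)\ge\bigheight(J_G)$ together with the Auslander--Buchsbaum formula $\pd\left(\frac{S}{J_G}\right)=2n-\depth\left(\frac{S}{J_G}\right)$. Both lower bounds are immediate: since $G$ is connected, $\emptyset\in\mathcal{C}(G)$ and $P_\emptyset(G)=J_{K_n}$ is a minimal prime of height $2n-(n+1)=n-1$, so $\bigheight(J_G)\ge n-1$ and hence $\pd\left(\frac{S}{J_G}\right)\ge n-1$. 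It therefore remains to prove the matching upper bounds $\bigheight(J_G)\le n-1$ and $\depth\left(\frac{S}{J_G}\right)\ge n+1$.

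For the first, \cite[Lemma~3.1]{HHHKR10} gives $\height(P_T(G))=n+|T|-c_G(T)$, so the bound is equivalent to the combinatorial statement $c_G(T)\ge |T|+1$ for every $T\in\mathcal{C}(G)$. I would prove this through the block--cut tree $\mathcal{T}$ of $G$: in a block graph a simplicial vertex is a cut vertex of no induced subgraph, so each element of a cut set is a genuine cut vertex, i.e. a node of degree $\ge 2$ in $\mathcal{T}$. Since cut--vertex nodes are pairwise non-adjacent in $\mathcal{T}$, deleting $T$ leaves $1+\sum_{v\in T}(\deg_{\mathcal{T}}(v)-1)$ subtrees, and $c_G(T)$ equals this number minus the count $e$ of blocks contained in $T$. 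The cut point property forces each $v\in T$ to be adjacent to at least two components of $G[\bar T]$, which translates into $\deg_{\mathcal{T}}(v)-2\ge q_v$, where $q_v$ is the number of blocks through $v$ that lie inside $T$; summing and using $\sum_{v\in T}q_v=\sum_{B\subseteq T}|B|\ge 2e$ then yields $c_G(T)\ge |T|+1$.

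The heart of the proof is the depth bound $\depth\left(\frac{S}{J_G}\right)\ge n+1$, which I would establish by induction on the pair (number of vertices, number of blocks) ordered lexicographically, the base case $G=K_n$ being Cohen--Macaulay of dimension $n+1$. For the inductive step pick a cut vertex $v$ (an internal vertex, as it lies in $\ge 2$ blocks) and feed Ohtani's decomposition $J_G=J_{G_v}\cap\left((x_v,y_v)+J_{G\setminus v}\right)$ into the short exact sequence
$$0\to \frac{S}{J_G}\to \frac{S}{J_{G_v}}\oplus\frac{S}{(x_v,y_v)+J_{G\setminus v}}\to \frac{S}{(x_v,y_v)+J_{(G_v)\setminus v}}\to 0.$$
Here $G_v$ is a connected block graph on $[n]$ with strictly fewer blocks (the blocks through $v$ fuse into a single clique), $G\setminus v$ is the disjoint union of $c:=c_G(\{v\})\ge 2$ connected block graphs on fewer vertices, and $(G_v)\setminus v$ is a connected block graph on $n-1$ vertices. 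The inductive hypothesis gives $\depth\left(\frac{S}{J_{G_v}}\right)\ge n+1$; additivity of depth over the $\KK$-tensor factors coming from the components of $G\setminus v$ gives $\depth\left(\frac{S}{(x_v,y_v)+J_{G\setminus v}}\right)\ge (n-1)+c\ge n+1$; and $\depth\left(\frac{S}{(x_v,y_v)+J_{(G_v)\setminus v}}\right)\ge n$. The depth lemma applied to the sequence then yields $\depth\left(\frac{S}{J_G}\right)\ge\min\{n+1,\,n-1+c,\,n+1\}=n+1$, the last equality because $c\ge 2$. Combined with $\depth\left(\frac{S}{J_G}\right)\le 2n-\bigheight(J_G)=n+1$, this forces equality, and hence $\pd\left(\frac{S}{J_G}\right)=n-1=\bigheight(J_G)$.

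I expect the main obstacle to be this inductive depth step, specifically checking that the three graphs produced by Ohtani's decomposition ($G_v$, the components of $G\setminus v$, and $(G_v)\setminus v$) are again connected block graphs that are strictly smaller in the lexicographic order, so that the induction is well founded, and then confirming that the rightmost term has depth at least $n$, which is exactly what the depth lemma needs to close. The big--height bound, by contrast, is finite combinatorial bookkeeping on the block--cut tree once the cut point property has been converted into the degree inequality above.
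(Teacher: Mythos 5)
Your proof is correct, but it takes a genuinely different and much longer route than the paper's. The paper's entire argument is a citation plus two inequalities: \cite[Theorem 1.1]{EHH11} already gives $\pd\left(\frac{S}{J_G}\right)=n-1$ for a connected block graph; then $\bigheight(J_G)\le \pd\left(\frac{S}{J_G}\right)=n-1$ by the standard inequality, while $\height(P_\emptyset(G))=n-1$ (from \cite[Lemma 3.1]{HHHKR10}, since $G$ is connected) gives $\bigheight(J_G)\ge n-1$ --- this last step being exactly your lower bound. Your inductive depth argument is in effect a self-contained reproof of the Ene--Herzog--Hibi bound $\depth\left(\frac{S}{J_G}\right)\ge n+1$, and it checks out: a cut vertex $v$ lies in at least two maximal cliques and is therefore internal, so Ohtani's lemma applies; $G_v$ is a connected block graph on $[n]$ with strictly fewer blocks because the blocks through $v$ fuse into a single clique; the components of $G\setminus v$ and the graph $(G_v)\setminus v$ are smaller connected block graphs; and the depth lemma applied to your short exact sequence closes the induction since $c\ge 2$. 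What your route buys is independence from \cite{EHH11}; what the citation buys is brevity. One genuine redundancy in your plan, however: the block--cut-tree verification of $c_G(T)\ge |T|+1$ for all $T\in\mathcal{C}(G)$ is unnecessary. Once $\depth\left(\frac{S}{J_G}\right)\ge n+1$ is in hand, Auslander--Buchsbaum gives $\pd\left(\frac{S}{J_G}\right)\le n-1$, and then
\[
n-1=\height(P_\emptyset(G))\le \bigheight(J_G)\le \pd\left(\frac{S}{J_G}\right)\le n-1
\]
forces both equalities with no combinatorics on $\mathcal{C}(G)$ at all --- this is precisely how the paper extracts the big height from the projective dimension. (For what it is worth, your combinatorial bound does appear sound: the cut point property forces each $v\in T$ to be adjacent to at least two components of $G[\bar T]$, necessarily through distinct blocks, since two blocks through $v$ meeting the same component would produce a cycle spanning two blocks; and your counting works because cut-vertex nodes are pairwise non-adjacent in the block--cut tree while blocks contained in $T$ become isolated nodes. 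But it is extra work the statement does not require.)
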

\begin{proof}
   Based on \cite[Theorem 1.1]{EHH11}, we have $\pd\left(\frac{S}{J_G}\right)=n-1$. This implies that $\bigheight(J_G) \le \pd\left(\frac{S}{J_G}\right)=n-1$. 
To demonstrate that $\bigheight(J_G) \ge n-1$, we first note that since $G$ is a connected graph, it follows from \cite[Lemma 3.1]{HHHKR10} that $\height(P_\emptyset(G))=n-1$. Therefore, $n-1 = \height(P_\emptyset(G)) \le  \bigheight(J_G)$. 
Thus, the assertion follows.
\end{proof}

The projective dimension of the binomial edge ideals of cycles is well-documented by Zafar and Zahid \cite{betti}. In the following, we will confirm that the quotient of the binomial edge ideal of cycles has the same projective dimension as the big height. 

\begin{proposition} \label{prop:pd-bh-cycle}
    Let $G=C_n$ with $n \ge 4$. Then $\pd\left( \frac{S}{J_G}\right)=\bigheight (J_G)=n$.
\end{proposition}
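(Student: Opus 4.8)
The plan is to prove the two equalities $\pd\left(\frac{S}{J_G}\right)=\bigheight(J_G)=n$ separately, and then combine them using the general inequality $\bigheight(J_G)\le\pd\left(\frac{S}{J_G}\right)$ that holds for any homogeneous ideal. First I would pin down the projective dimension using the known Betti number / resolution data of Zafar and Zahid \cite{betti}. Their work on cycles records the full graded Betti table (or at least the top homological degree with a nonvanishing Betti number) of $\frac{S}{J_{C_n}}$, so I would cite the appropriate theorem there to conclude directly that $\pd\left(\frac{S}{J_{C_n}}\right)=n$ for $n\ge 4$. This is the same strategy used in the preceding block-graph proposition: invoke an external resolution result to nail the projective dimension, then handle the big height combinatorially.

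The main work is the computation $\bigheight(J_{C_n})=n$, and this is where I would do something self-contained. Since $\bigheight(J_G)=\max\{\height(P_T(G))~:~T\in\mathcal{C}(G)\}$ and $\height(P_T(G))=\dim(S)-\dim\left(\frac{S}{P_T(G)}\right)=2n-(n-|T|+c_G(T))=n-|T|+|T|-c_G(T)$... more precisely, with $S=\KK[x_1,\ldots,x_n,y_1,\ldots,y_n]$ of dimension $2n$ and $\dim\left(\frac{S}{P_T(G)}\right)=(n-|T|)+c_G(T)$ by \cite[Lemma~3.1]{HHHKR10}, we get $\height(P_T(G))=2n-(n-|T|)-c_G(T)=n+|T|-c_G(T)$. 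So computing the big height amounts to maximizing $|T|-c_G(T)$ over all cut sets $T$ of the cycle $C_n$. For a cycle, the cut sets are well understood: removing a set $T$ of vertices from $C_n$ breaks it into exactly $c_G(T)=|T|$ arcs precisely when $T$ is nonempty and its elements are pairwise nonadjacent along the cycle (a ``scattered'' set), in which case $|T|-c_G(T)=0$; allowing adjacent vertices in $T$ only decreases the number of components relative to $|T|$, so $|T|-c_G(T)\le 0$ in general, with equality realized, e.g., by $T=\emptyset$. I would verify that the minimal primes of $J_{C_n}$ correspond exactly to these scattered cut sets together with $T=\emptyset$, and that the maximum of $|T|-c_G(T)$ is $0$. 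That gives $\height(P_T(G))=n$ at the maximizing $T$, hence $\bigheight(J_{C_n})=n$.

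I expect the subtle point — the main obstacle — to be correctly characterizing $\mathcal{C}(C_n)$ and checking the extremal value of $|T|-c_G(T)$, because one must be careful about how adjacent versus non-adjacent vertices in $T$ affect both $|T|$ and $c_G(T)$, and about the cut-point property forcing each $i\in T$ to genuinely increase the component count. A clean way to finish is to observe that for any $T\in\mathcal{C}(C_n)$ the induced graph $C_n[\bar T]$ is a disjoint union of paths, and a direct count shows $c_G(T)\ge|T|$ for every such $T$ with equality exactly in the scattered case, so $|T|-c_G(T)\le 0$ always. Combining $\pd\left(\frac{S}{J_G}\right)=n$ (from \cite{betti}) with $\bigheight(J_G)=n$ and the sandwich $\bigheight(J_G)\le\pd\left(\frac{S}{J_G}\right)$ forces both to equal $n$, completing the proof.
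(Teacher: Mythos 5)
Your proposal is correct and reaches the stated equalities, but it takes a genuinely different, more elaborate route on the big-height side than the paper does. The paper's proof is minimal: it cites \cite[Corollary 16]{betti} for $\pd\left(\frac{S}{J_G}\right)=n$, deduces $\bigheight(J_G)\le n$ from the general inequality $\bigheight \le \pd$, and then exhibits a \emph{single} witness $T=\{1,3\}\in\mathcal{C}(C_n)$, for which $c_G(T)=2$ and hence $\height(P_T(G))=n+|T|-c_G(T)=n$ by \cite[Lemma 3.1]{HHHKR10}; no characterization of $\mathcal{C}(C_n)$ is needed. You instead propose to determine $\mathcal{C}(C_n)$ completely (the empty set together with the scattered sets of pairwise nonadjacent vertices, necessarily of size at least $2$, since a singleton fails the cut-point property: deleting one vertex leaves $C_n$ connected) and to maximize $|T|-c_G(T)$ over all of $\mathcal{C}(C_n)$. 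This computes $\bigheight(J_G)=n$ independently of the projective-dimension bound, which is more work but buys more information --- a full description of the minimal primes of $J_{C_n}$, in the spirit of the paper's Section 3 analysis of crown graphs --- whereas the paper's one-witness argument is shorter but leans on $\pd \ge \bigheight$ for the upper bound.

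Two local slips in your write-up need repair, though neither is fatal since your later restricted claim supersedes them. First, the sentence asserting that adjacent vertices in $T$ ``only decrease the number of components relative to $|T|$, so $|T|-c_G(T)\le 0$ in general'' has the inequality backwards: if $T$ contains a run of adjacent vertices then $c_G(T)<|T|$, so $|T|-c_G(T)>0$ and $\height(P_T(G))>n$. Such sets must be excluded because they violate the cut-point property --- a vertex $i\in T$ with a neighbor in $T$ has degree at most one in $G[\bar T\cup\{i\}]$ and so cannot be a cut vertex --- not by any inequality valid for arbitrary $T$. Second, $T=\emptyset$ does not realize equality: $c_G(\emptyset)=1$ gives $|T|-c_G(T)=-1$ and $\height(P_\emptyset(G))=n-1$; the maximum value $0$ is attained exactly by the scattered sets with $|T|\ge 2$, for instance $T=\{1,3\}$, which exists because $n\ge 4$ and is precisely the paper's witness. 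With these corrections, your restricted statement --- for $T\in\mathcal{C}(C_n)$ one has $c_G(T)\ge |T|$, with equality for the nonempty (scattered) members --- is correct, and the argument closes as you describe.
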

\begin{proof}
    Based on \cite[Corollary 16]{betti}, we know that $\pd\left(\frac{S}{J_G}\right)=n$. This means that $\bigheight(J_G) \le \pd\left(\frac{S}{J_G}\right)=n$. To show that $\bigheight(J_G) \ge n$, we first observe that the set $T=\{1,3\}$ has the cut point property. Additionally, $G[\bar T]$ forms a disconnected graph with two connected components, meaning that $c_G(T)=2$. 
    As a result of \cite[Lemma 3.1]{HHHKR10}, we deduce that $\height(P_T(G))=n+|T|-c_G(T)=n$. Therefore, we have $n = \height(P_T(G)) \le  \bigheight(J_G)$. Hence, the claim follows.
\end{proof}

Kumar and Sarkar \cite{KS20} determined the projective dimension of the binomial edge ideals for both wheel graphs and complete multipartite graphs. They also provided a formula for the projective dimension of the binomial edge ideals for the join product of graphs. In the next few results, we examine the cone of a graph and complete multipartite graphs, focusing on when the quotient of the binomial edge ideal has the same projective dimension as the big height. The {\it cone} of a graph $G$ is a graph on the vertex set $V(G) \sqcup \{v\}$ and edge set $E(G) \sqcup \{ \{v,u\}~:~ u \in V(G)\}.$
\begin{proposition}\label{prop:pd-bh-cone}
    Let $G$ be a connected graph on the vertex set $[n]$ and let $v$ be a new vertex. Let $H=v*G$ and $S_H=S[x_v,y_v]$. Then, we have the following: 
    \begin{enumerate}
        \item[$(1)$] $\pd\left( \frac{S_H}{J_H}
        \right) =2+\pd\left( \frac{S}{J_G}
        \right).$
        \item[$(2)$] if $\height(P_T(G)) <n-1$ for all $T \in \mathcal{C}(G) \setminus \{\emptyset\},$ then $\bigheight(J_H)=n.$
        \item[$(3)$] if $\height(P_T(G)) \ge n-1$ for some  $T \in \mathcal{C}(G) \setminus \{\emptyset\},$ then $\bigheight(J_H)=2+\bigheight(J_G).$
        \item[$(4)$] $\pd\left( \frac{S_H}{J_H}\right)=\bigheight(J_H)$ if and only if  $\pd\left( \frac{S}{J_G}\right)=\bigheight(J_G)$ and    $\height(P_T(G)) \ge n-1$ for some  $T \in \mathcal{C}(G) \setminus \{\emptyset\}$.  \end{enumerate}
\end{proposition}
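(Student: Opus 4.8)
The plan is to obtain (1) from the join--product machinery, to read off (2) and (3) from a direct description of the cut sets of the cone, and to deduce (4) by combining the three.

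For (1), the key point is that $v$ is a universal vertex of $H=v*G$, so in Ohtani's decomposition $J_H=J_{H_v}\cap\big((x_v,y_v)+J_{H\setminus v}\big)$ we have $H_v=K_{n+1}$ and $H\setminus v=G$. This gives the Mayer--Vietoris short exact sequence
$$0\to \frac{S_H}{J_H}\to \frac{S_H}{J_{K_{n+1}}}\oplus\frac{S_H}{(x_v,y_v)+J_G}\to \frac{S_H}{(x_v,y_v)+J_{K_n}}\to 0,$$
where I have used $J_{K_{n+1}}+(x_v,y_v)=J_{K_n}+(x_v,y_v)$ and $J_G\subseteq J_{K_n}$. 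The three flanking modules are understood: $S_H/J_{K_{n+1}}$ is Cohen--Macaulay with $\pd=n$; $S_H/((x_v,y_v)+J_G)\cong S/J_G$ contributes $\pd=2+\pd(S/J_G)$; and $S_H/((x_v,y_v)+J_{K_n})\cong S/J_{K_n}$ contributes $\pd=n+1$. Since $G$ is connected, $\pd(S/J_G)\ge\height(P_\emptyset(G))=n-1$, so the middle term $2+\pd(S/J_G)$ dominates, and the depth lemma gives the bound $\pd(S_H/J_H)\le 2+\pd(S/J_G)$ cleanly (using $\depth(S/J_G)\le n+1$). The matching lower bound, equivalently $\depth(S_H/J_H)\le\depth(S/J_G)$, is immediate from the local--cohomology sequence when $\pd(S/J_G)>n-1$, but is delicate in the boundary case $\pd(S/J_G)=n-1$; here I would invoke the projective--dimension formula for join products of Kumar--Sarkar \cite{KS20} applied to $H=K_1*G$, which yields the stated identity directly. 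Note that $G$ must be non-complete for the $+2$ to hold: if $G=K_n$ then $H=K_{n+1}$ and the increase is only $1$, and this degenerate case (exactly where the sequence above splits) should be excluded.

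The combinatorial heart is (2)--(3), for which I would determine $\mathcal{C}(H)$ explicitly. Because $v$ is universal, $H[\bar T]$ is connected whenever $v\notin T$, so $c_H(T)=1$ and no nonempty $T$ with $v\notin T$ can have the cut point property; hence every nonempty element of $\mathcal{C}(H)$ contains $v$. Writing $T=T'\cup\{v\}$ with $T'\subseteq[n]$, one has $H[\bar T]=G[[n]\setminus T']$, so $c_H(T)=c_G(T')$; checking the cut--vertex condition at $v$ (which needs $c_G(T')\ge2$) and at each $i\in T'$ (which is exactly the cut--point condition of $T'$ inside $G$) shows $T'\cup\{v\}\in\mathcal{C}(H)$ if and only if $T'\in\mathcal{C}(G)\setminus\{\emptyset\}$, the requirement $c_G(T')\ge2$ being automatic for nonempty $T'\in\mathcal{C}(G)$. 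Using $\height(P_T(H))=(n+1)+|T|-c_H(T)$, this translates heights by exactly $2$: $\height(P_{T'\cup\{v\}}(H))=\height(P_{T'}(G))+2$, while $\height(P_\emptyset(H))=n$. Therefore $\bigheight(J_H)=\max\{\,n,\ \max_{T'\in\mathcal{C}(G)\setminus\{\emptyset\}}(\height(P_{T'}(G))+2)\,\}$, and splitting on whether every nonempty cut set has height $<n-1$ or some has height $\ge n-1$ gives (2) and (3) respectively; in the latter case the hypothesis forces $\bigheight(J_G)=\max_{T'\ne\emptyset}\height(P_{T'}(G))$, whence $\bigheight(J_H)=2+\bigheight(J_G)$.

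Finally, (4) follows formally from (1)--(3). By (1), $\pd(S_H/J_H)=2+\pd(S/J_G)$. If some nonempty $T'\in\mathcal{C}(G)$ has $\height(P_{T'}(G))\ge n-1$, then (3) gives $\bigheight(J_H)=2+\bigheight(J_G)$, so $\pd(S_H/J_H)=\bigheight(J_H)$ iff $\pd(S/J_G)=\bigheight(J_G)$. If instead every nonempty cut set has height $<n-1$, then (2) gives $\bigheight(J_H)=n$, whereas $\pd(S_H/J_H)=2+\pd(S/J_G)\ge n+1>n$, so equality is impossible and the right--hand condition also fails; this yields the stated equivalence. The main obstacle is genuinely part (1)---matching the lower bound in the boundary case $\pd(S/J_G)=n-1$ and correctly excluding $G=K_n$---which is why I would lean on \cite{KS20} there; the rest is a bookkeeping of cut sets made transparent by the universal vertex.
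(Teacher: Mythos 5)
Your proof is correct, and its logical skeleton ultimately coincides with the paper's: part (1) rests on the Kumar--Sarkar depth formula for cones \cite[Theorem 3.4]{KS20} together with Auslander--Buchsbaum (exactly the paper's one-line proof), parts (2)--(3) come from the description of $\mathcal{C}(v*G)$ combined with the height formula $\height(P_A(H))=(n+1)+|A|-c_H(A)$ from \cite[Lemma 3.1]{HHHKR10}, and (4) is the same formal combination, including the observation that under the hypothesis of (2) one has $\pd\left(\frac{S_H}{J_H}\right)\ge 2+(n-1)>n=\bigheight(J_H)$, so equality is impossible on both sides of the equivalence. Two genuine differences are worth recording. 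First, where the paper simply cites \cite[Proposition 4.1]{DS15} for $\mathcal{C}(H)=\{\emptyset\}\cup\left\{\{v\}\cup T~:~T\in\mathcal{C}(G)\setminus\{\emptyset\}\right\}$, you re-derive it from the universal vertex, and your derivation is complete: the key points that no nonempty cut set can avoid $v$, and that $c_G(T')\ge 2$ is automatic for nonempty $T'\in\mathcal{C}(G)$ because $G$ is connected, are both right, so your (2)--(3) are self-contained. Second, for (1) you set up the Ohtani/Mayer--Vietoris sequence
$$0\to \frac{S_H}{J_H}\to \frac{S_H}{J_{K_{n+1}}}\oplus\frac{S_H}{(x_v,y_v)+J_G}\to \frac{S_H}{(x_v,y_v)+J_{K_n}}\to 0;$$
the depth lemma does pin down $\depth\left(\frac{S_H}{J_H}\right)=\depth\left(\frac{S}{J_G}\right)$ whenever $\depth\left(\frac{S}{J_G}\right)\le n$, but, as you honestly note, it is inconclusive in the boundary case $\depth\left(\frac{S}{J_G}\right)=n+1$ (both flanking depths then equal $n+1$), and your fallback to \cite{KS20} there is precisely what the paper does for all of (1) --- so the sequence is instructive scaffolding rather than an independent proof. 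Finally, your caveat about complete graphs is a real catch rather than a defect of your argument: for $G=K_n$ one has $H=K_{n+1}$, so $\pd\left(\frac{S_H}{J_H}\right)=n\ne n+1=2+\pd\left(\frac{S}{J_G}\right)$, and the ``only if'' direction of (4) also fails (both quotients are Cohen--Macaulay, yet $\mathcal{C}(K_n)=\{\emptyset\}$ makes the height condition vacuously false); the paper's statement tacitly inherits the non-completeness hypothesis of \cite[Theorem 3.4]{KS20} (which is also what makes $v$ an internal vertex, as Ohtani's lemma requires), and you are right that it should be made explicit.
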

\begin{proof}
    (1) This part follows using the Auslander-Buchsbaum formula in  \cite[Theorem 3.4]{KS20}. 
    \par (2-3) Based on \cite[Proposition 4.1]{DS15}, we have $$\mathcal{C}(H)=\{\emptyset\} \cup \left\{\{v\} \cup T~:~ T \in \mathcal{C}(G) \setminus \{\emptyset\}\right\}.$$ Using \cite[Lemma 3.1]{HHHKR10}, we know that for any $A \in \mathcal{C}(H)$, $\height(P_A(H))=(n+1)+|A|-c_H(A)$. Consequently, $\height(P_\emptyset(H))=n$, and for any $A \in \mathcal{C}(H)$ with $A \neq \emptyset$, we have $$\height(P_A(H))=2+n+|A\setminus \{v\}| -c_G(A \setminus \{v\})=2+\height(P_{A \setminus \{v\}}(G)).$$
In this way, we get the respective conclusion by using the hypothesis of the respective part of the definition of the big height. 
\par (4) Using parts $(1)$ and $(3)$, we get $\pd\left( \frac{S_H}{J_H}\right)=\bigheight(J_H)$ if $\pd\left( \frac{S}{J_G}\right)=\bigheight(J_G)$ and $\height(P_T(G)) \geq n-1$ for some $T \in \mathcal{C}(G) \setminus \{\emptyset\}$.

 Conversely, suppose $\pd\left( \frac{S_H}{J_H}\right)=\bigheight(J_H)$. Since $G$ is a connected graph, $\pd\left(\frac{S}{J_G} \right) \geq \bigheight(J_G) \geq \height(P_\emptyset(G)) = n-1$. Using part $(1)$, we get $\pd\left(\frac{S_H}{J_H} \right) \geq n+1$, which means $\bigheight(J_H) \geq n+1$. According to parts $(2-3)$, this implies that $\height(P_T(G)) \geq n-1$ for some $T \in \mathcal{C}(G) \setminus \{\emptyset\}$, and hence, $\bigheight(J_H)=2+\bigheight(J_G)$. We get the desired result using part $(1)$. 
\end{proof}
As an immediate consequence of Proposition \ref{prop:pd-bh-cone}, we confirm that the projective dimension of the quotient of the binomial edge ideal of wheel graphs is the same as the big height. Recall that the {\it wheel graph} on $n+1$ vertices is the cone of a cycle graph on $n$ vertices. 

\begin{corollary}
    Let $H=v*C_n$ be the wheel graph on the vertex set $[n]\cup \{v\}$ with $n \ge 4.$ Then, $\pd\left( \frac{S_H}{J_H}\right)=\bigheight(J_H)=n+2,$ where $S_H=S[x_v,y_v].$
\end{corollary}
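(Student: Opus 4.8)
The plan is to recognize the wheel graph $H = v * C_n$ as the cone over the cycle $C_n$ and then apply Proposition \ref{prop:pd-bh-cone} with $G = C_n$, a graph on $n$ vertices. Since the threshold appearing in that proposition is $n-1$, to obtain the equality $\pd\left(\frac{S_H}{J_H}\right) = \bigheight(J_H)$ from part $(4)$ I need to verify its two hypotheses: first, that $\pd\left(\frac{S}{J_{C_n}}\right) = \bigheight(J_{C_n})$; and second, that $\height(P_T(C_n)) \ge n-1$ for some nonempty $T \in \mathcal{C}(C_n)$.

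The first hypothesis is precisely Proposition \ref{prop:pd-bh-cycle}, which gives $\pd\left(\frac{S}{J_{C_n}}\right) = \bigheight(J_{C_n}) = n$ for $n \ge 4$. For the second, I would reuse the cut set already exhibited in the proof of Proposition \ref{prop:pd-bh-cycle}: the set $T = \{1,3\}$ has the cut point property, so $T \in \mathcal{C}(C_n) \setminus \{\emptyset\}$, and removing vertices $1$ and $3$ from $C_n$ leaves two components (the isolated vertex $2$ and the path on $4, 5, \ldots, n$), so $c_{C_n}(T) = 2$. By \cite[Lemma 3.1]{HHHKR10} this yields $\height(P_T(C_n)) = n + |T| - c_{C_n}(T) = n + 2 - 2 = n \ge n-1$, as required. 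Thus both hypotheses of Proposition \ref{prop:pd-bh-cone}$(4)$ are satisfied, and we conclude that $\pd\left(\frac{S_H}{J_H}\right) = \bigheight(J_H)$.

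To pin down the common value, I would invoke Proposition \ref{prop:pd-bh-cone}$(1)$ to get $\pd\left(\frac{S_H}{J_H}\right) = 2 + \pd\left(\frac{S}{J_{C_n}}\right) = 2 + n$, and hence $\bigheight(J_H) = n+2$ as well. Equivalently, since the height condition holds, part $(3)$ of the same proposition gives $\bigheight(J_H) = 2 + \bigheight(J_{C_n}) = n+2$ directly, which provides an independent confirmation of the value.

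Since every ingredient has already been established, there is no genuine obstacle here; the corollary is a bookkeeping assembly of the cycle case with the cone formula. The only point deserving a moment's care is confirming that the height threshold $n-1$ is actually met by a nonempty cut set of $C_n$, and this is immediate from the $T = \{1,3\}$ computation that is already present in Proposition \ref{prop:pd-bh-cycle}.
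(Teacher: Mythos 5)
Your proof is correct and follows essentially the same route as the paper: both deduce the result from Proposition \ref{prop:pd-bh-cycle} together with Proposition \ref{prop:pd-bh-cone}. The only cosmetic difference is that you re-exhibit the explicit cut set $T=\{1,3\}$ to verify $\height(P_T(C_n)) = n \ge n-1$, whereas the paper infers the existence of such a nonempty $T$ directly from $\bigheight(J_{C_n})=n$ (noting $\height(P_\emptyset(C_n))=n-1<n$, so the maximizing $T$ is nonempty).
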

\begin{proof}
   By Proposition \ref{prop:pd-bh-cycle}, we know that  $\pd\left( \frac{S}{J_{C_n}}\right)=\bigheight(J_{C_n})=n.$ This implies that there exists $T \in \mathcal{C}(C_n)$ so that $\height(P_T(C_n)) =n\ge n-1.$ Thus, using Proposition \ref{prop:pd-bh-cone}, $\pd\left( \frac{S_H}{J_H}\right)=\bigheight(J_H)=n+2.$
\end{proof}

\begin{proposition}
    Let $G$ be a disconnected graph on the vertex set $[n]$ and let $v$ be a new vertex. Let $H=v*G$ and $S_H=S[x_v,y_v]$. Then, we have the following: 
    \begin{enumerate}
        \item[$(1)$] $\pd\left( \frac{S_H}{J_H}
        \right) =\max\left\{n, 2+\pd\left( \frac{S}{J_G}
        \right)\right\}.$
           \item[$(2)$]  $\bigheight(J_H)=\max\left\{n,2+\bigheight(J_G)\right\}.$
        \item[$(3)$] if $\bigheight(J_G) \ge n-2$, then $\pd\left( \frac{S_H}{J_H}\right)=\bigheight(J_H)$ if and only if  $\pd\left( \frac{S}{J_G}\right)=\bigheight(J_G).$ 
        \item[$(4)$] if $\bigheight(J_G) < n-2$, then $\pd\left( \frac{S_H}{J_H}\right)=\bigheight(J_H)$ if and only if  $\pd\left( \frac{S}{J_G}\right)\le n-2.$   \end{enumerate}
\end{proposition}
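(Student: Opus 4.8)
The plan is to mirror the structure of the connected case (Proposition~\ref{prop:pd-bh-cone}) while tracking the extra contribution coming from the fact that $G$ is now disconnected, which changes the height of $P_\emptyset$. First I would record the projective dimension formula for the cone of a disconnected graph: part $(1)$ should follow directly from the Auslander--Buchsbaum-type computation in \cite[Theorem 3.4]{KS20}, exactly as in part $(1)$ of Proposition~\ref{prop:pd-bh-cone}, except that when $G$ is disconnected the join $H=v*G$ becomes connected and the formula acquires the $\max$ with $n$ (the height of $P_\emptyset(H)$ now competes with the interior term $2+\pd(S/J_G)$). I would cite the precise statement of \cite[Theorem 3.4]{KS20} for the disconnected case to justify the $\max$.

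Next, for part $(2)$, I would again invoke the description of $\mathcal{C}(H)$ from \cite[Proposition 4.1]{DS15}. The key point is that because $G$ is disconnected, $c_G(\emptyset)\ge 2$, so now $\emptyset \in \mathcal{C}(G)$ contributes a genuine nonempty cut set $\{v\}$ for $H$: one has $\mathcal{C}(H)=\{\emptyset\}\cup\{\{v\}\cup T : T\in\mathcal{C}(G)\}$ where $T=\emptyset$ is allowed. Using $\height(P_A(H))=(n+1)+|A|-c_H(A)$ from \cite[Lemma 3.1]{HHHKR10}, the empty set gives $\height(P_\emptyset(H))=n$, while $A=\{v\}\cup T$ gives $\height(P_A(H))=2+\height(P_T(G))$ for every $T\in\mathcal{C}(G)$ including $T=\emptyset$. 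Taking the maximum over all of $\mathcal{C}(H)$ yields $\bigheight(J_H)=\max\{n,\,2+\bigheight(J_G)\}$, which is part $(2)$.

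For parts $(3)$ and $(4)$ I would combine $(1)$ and $(2)$ and analyze the equality $\pd=\bigheight$ by comparing the two $\max$ expressions termwise. Writing $p:=\pd(S/J_G)$ and $b:=\bigheight(J_G)$, we always have $p\ge b$ (the general inequality) and $b\ge n-2$ forces the interior terms $2+p$ and $2+b$ to dominate $n$ in both formulas, so in that regime $\pd(S/J_H)=2+p$ and $\bigheight(J_H)=2+b$, whence equality holds iff $p=b$; this is part $(3)$. When $b<n-2$, the big-height formula gives $\bigheight(J_H)=n$, while $\pd(S/J_H)=\max\{n,2+p\}$, so $\pd=\bigheight$ precisely when $2+p\le n$, i.e.\ $p\le n-2$; this is part $(4)$.

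The main obstacle I anticipate is purely bookkeeping rather than conceptual: I must be careful about whether $\emptyset\in\mathcal{C}(G)$ genuinely produces the cut set $\{v\}$ in $H$ (it does only because disconnectedness makes $v$ a cut-type vertex, i.e.\ $c_H(\{v\})=c_G(\emptyset)\ge 2$), and I must confirm that the cited forms of \cite[Theorem 3.4]{KS20}, \cite[Proposition 4.1]{DS15}, and \cite[Lemma 3.1]{HHHKR10} apply verbatim when $G$ is disconnected rather than connected. The delicate case in part $(4)$ is the boundary $b<n-2$ with $p$ large: here one genuinely needs the $\max$ in $(1)$, since it is possible for $2+p$ to still exceed $n$ even though $2+b<n$, and the criterion $p\le n-2$ records exactly when it does not.
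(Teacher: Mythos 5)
Your proposal is correct and follows essentially the same route as the paper: part $(1)$ from the Kumar--Sarkar depth formula for the join with a disconnected graph via Auslander--Buchsbaum, part $(2)$ from the description of $\mathcal{C}(H)$ together with $\height(P_A(H))=(n+1)+|A|-c_H(A)$ (correctly noting that $T=\emptyset$ now contributes the cut set $\{v\}$), and parts $(3)$--$(4)$ by the same termwise comparison of the two $\max$ expressions. The only discrepancy is bibliographic, exactly the point you flagged: for the disconnected case the paper invokes \cite[Theorem~3.9]{KS20} and \cite[Proposition~4.5]{DS15} rather than the connected-case statements \cite[Theorem~3.4]{KS20} and \cite[Proposition~4.1]{DS15} that you named.
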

\begin{proof}
    (1) This part follows using the Auslander-Buchsbaum formula in  \cite[Theorem 3.9]{KS20}. 
    \par (2) Based on \cite[Proposition 4.5]{DS15}, we have $$\mathcal{C}(H)=\{\emptyset\} \cup \left\{\{v\} \cup T~:~ T \in \mathcal{C}(G)\right\}.$$ Using \cite[Lemma 3.1]{HHHKR10}, we know that for any $A \in \mathcal{C}(H)$, $\height(P_A(H))=(n+1)+|A|-c_H(A)$. Consequently, $\height(P_\emptyset(H))=n$, and for any $A \in \mathcal{C}(H)$ with $A \neq \emptyset$, we have $$\height(P_A(H))=2+n+|A\setminus \{v\}| -c_G(A \setminus \{v\})=2+\height(P_{A \setminus \{v\}}(G)).$$
Thus, \begin{align*}
    \bigheight(J_H)&= \max\left\{\height(P_\emptyset(H)), \max\left\{ \height(P_A(H))~:~ A\in \mathcal{C}(H)\right\} \right\}\\& =\max\left\{n,\max\left\{ 2+\height(P_{A \setminus \{v\}}(G))~:~ A \setminus \{v\} \in \mathcal{C}(G)\right\}\right\}\\& =\max\left\{n, 2+\max\left\{ \height(P_{T}(G))~:~ T \in \mathcal{C}(G)\right\}\right\} \\& =\max\left\{n, 2+\bigheight(J_G)\right\} .
\end{align*} 
\par (3) Assume that $\bigheight(J_G) \ge n-2.$ Then, $\pd\left(\frac{S}{J_G}\right) \ge \bigheight(J_G) \ge n-2.$ Therefore, by parts $(1-2)$, $\pd\left( \frac{S_H}{J_H}
        \right) =\max\left\{n, 2+\pd\left( \frac{S}{J_G}
        \right)\right\}=2+\pd\left( \frac{S}{J_G}
        \right) $
 and            $\bigheight(J_H)=\max\left\{n,2+\bigheight(J_G)\right\}=2+\bigheight(J_G).$ Thus, the desired result follows. 

\par (4) Assume that $\bigheight(J_G) < n-2$. Then, by part (2), $\bigheight(J_H) =n$. Therefore, by part (1), $\pd\left( \frac{S_H}{J_H} \right) =\bigheight(J_H)$ if and only if $\pd\left( \frac{S_H}{J_H} \right) =n$ if and only if  $\pd\left( \frac{S}{J_G}\right)\le n-2$. Thus, the desired result follows. 
\end{proof}

Next, we study complete multipartite graphs where the quotient of the binomial edge ideals has the same projective dimension as the big height.

\begin{proposition}\label{prop:com-bip}
    Let $G=K_{n_1,\ldots,n_k}$ be a complete multipartite graph with $2 \le n_1 \le \cdots \le n_k.$ Then, we have the following: \begin{enumerate}
        \item[$(1)$] $\pd\left(\frac{S}{J_G}\right)=2(n_2+\cdots+n_k)+n_1-2.$
        \item[$(2)$] $\bigheight(J_G)=2(n_2+\cdots + n_k).$
        \item[$(3)$] $\pd\left(\frac{S}{J_G}\right)=\bigheight(J_G)$ if and only if $n_1=2.$
    \end{enumerate}
\end{proposition}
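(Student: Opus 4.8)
The plan is to dispatch the three parts in order, obtaining (3) as a one-line consequence of (1) and (2); the only substantive work is the big-height computation in (2). For part (1), I would simply invoke the projective dimension formula of Kumar and Sarkar \cite{KS20} for complete multipartite graphs, which records $\pd\left(\frac{S}{J_G}\right)=2(n_2+\cdots+n_k)+n_1-2$ verbatim.

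For part (2), write $N=n_1+\cdots+n_k$ for the number of vertices, let $V_1,\ldots,V_k$ denote the parts (so $|V_i|=n_i$), and note $k\ge 2$ since $G$ is connected. By \cite[Lemma 3.1]{HHHKR10} one has $\height(P_T(G))=N+|T|-c_G(T)$, so $\bigheight(J_G)$ is the maximum of this quantity over $T\in\mathcal{C}(G)$, and the crux is to pin down $\mathcal{C}(G)$. I claim $\mathcal{C}(G)=\{\emptyset\}\cup\{V(G)\setminus V_i~:~i\in[k]\}$, and I would prove this for a nonempty $T\in\mathcal{C}(G)$ in two steps. First, if $\bar T$ met two distinct parts then $G[\bar T]$ would be connected (vertices in different parts are adjacent, and two vertices in a common part share a neighbor in any other nonempty part), so no $i\in T$ could be a cut vertex of $G[\bar T\cup\{i\}]$; hence $\bar T\subseteq V_i$ for a single $i$. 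Second, if $T$ contained some $j\in V_i$, then $\bar T\cup\{j\}\subseteq V_i$ induces an edgeless graph, so deleting $j$ strictly decreases the component count and $j$ fails the cut-vertex condition; hence $T\cap V_i=\emptyset$, forcing $\bar T=V_i$ and $T=V(G)\setminus V_i$. Finally such a $T$ actually lies in $\mathcal{C}(G)$ precisely when $n_i\ge 2$: for $j\in V_\ell$ with $\ell\ne i$, the graph $G[V_i\cup\{j\}]$ is a star with center $j$, whose $n_i$ leaves become isolated on removing $j$, making $j$ a cut vertex exactly when $n_i\ge 2$. Since every $n_i\ge 2$ by hypothesis, all $k$ sets $V(G)\setminus V_i$ qualify.

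With $\mathcal{C}(G)$ identified, the computation is immediate: $\height(P_\emptyset(G))=N-1$, while $c_G(V(G)\setminus V_i)=n_i$ gives $\height(P_{V(G)\setminus V_i}(G))=N+(N-n_i)-n_i=2(N-n_i)$. This is largest for the smallest part, $n_1$, yielding $2(N-n_1)=2(n_2+\cdots+n_k)$; and since $N\ge 2n_1$ (because $k\ge 2$ and each $n_j\ge n_1$) we get $2(N-n_1)\ge N>N-1$, so $\emptyset$ does not compete. Hence $\bigheight(J_G)=2(n_2+\cdots+n_k)$, which is (2). Part (3) then follows at once by comparing the formulas: $\pd\left(\frac{S}{J_G}\right)=\bigheight(J_G)$ if and only if $n_1-2=0$, i.e.\ $n_1=2$.

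The main obstacle I anticipate is the two-step characterization of $\mathcal{C}(G)$ — specifically ruling out cut sets whose complement straddles a single part (vertices removed from within the surviving part destroy the cut-vertex property) and those meeting two parts at once (the induced graph stays connected). Once that combinatorial description is secured, the remainder is a short height calculation, and I would not expect any field-dependence or homological subtlety to intervene.
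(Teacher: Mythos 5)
Your proposal is correct and follows essentially the same route as the paper: part (1) is delegated to Kumar--Sarkar (the paper combines \cite[Corollary 4.5]{KS20} with the Auslander--Buchsbaum formula), part (2) combines the description $\mathcal{C}(G)=\{\emptyset\}\cup\{V(G)\setminus V_i~:~i\in[k]\}$ with the height formula $\height(P_T(G))=N+|T|-c_G(T)$ of \cite[Lemma 3.1]{HHHKR10} and then maximizes, and part (3) is the same one-line comparison. The only divergence is that where the paper simply cites \cite[Lemma 2.2]{Oh11} for the description of $\mathcal{C}(G)$, you prove it directly, and your two-step combinatorial argument (a complement meeting two parts forces $G[\bar T]$ connected so no vertex of $T$ can be a cut vertex; a vertex of $T$ lying in the single surviving part fails the cut-vertex test since reinserting it only adds an isolated vertex, while $V(G)\setminus V_i\in\mathcal{C}(G)$ because each reinserted vertex is the center of a star on $n_i\ge 2$ leaves) is sound, making your write-up self-contained where the paper relies on a citation.
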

\begin{proof}
    (1) We use the Auslander-Buchsbaum formula in  \cite[Corollary 4.5]{KS20} to get the projective dimension.
    \par (2) 
Let $V_1 \sqcup \cdots \sqcup V_k$ be a partition of the vertex set $V(G)$, where $|V_i|=n_i$ for $ i \in [k]$. According to \cite[Lemma 2.2]{Oh11}, we have $\mathcal{C}(G)=\{\emptyset\} \cup \{V(G) \setminus V_i~:~ i \in [k]\}$. It's important to observe that the connected components of $G[\bar {V(G) \setminus V_i}]$ correspond to the vertices that are part of $V_i$. Therefore, using \cite[Lemma 3.1]{HHHKR10}, we get  
$$\height(P_{V(G)\setminus V_i}(G))= (n_1+\cdots +n_k)+|V(G) \setminus V_i|-|V_i|=2(n_1+\cdots+n_k)-2n_i.$$ 
Applying the definition of the big height, we get 
\[
\begin{aligned}
\bigheight(J_G) & =\max\{ \height(P_T(G))~:~ T \in \mathcal{C}(G)\}\\
 & = \max\{ n_1+\cdots+n_k-1, \max\{2(n_1+\cdots+n_k)-2n_i~:~i \in [k]\}\}\\
 & =2(n_2+\cdots+n_k).
\end{aligned}
\]
    \par (3) This part clearly follows using parts $(1-2). $
\end{proof}

Malayeri et al. \cite[Theorem~5.3]{small-depth} proved that the projective dimension of the quotient of the binomial edge ideals is at most $2n-4$. They also \cite{small-depth} characterized graphs whose binomial edge ideals have the maximal projective dimension. Here, we verify that for binomial edge ideals of maximal projective dimension, the projective dimension is the same as the big height. 

\begin{proposition}
    Let $G$ be a graph on the vertex set $[n]$ with $\pd\left(\frac{S}{J_G}\right)=2n-4.$ Then, $$\pd\left(\frac{S}{J_G}\right)=\bigheight(J_G)=2n-4.$$
\end{proposition}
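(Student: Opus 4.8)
The plan is to combine the universal inequality $\pd(S/J_G)\ge\bigheight(J_G)$ with the explicit classification of graphs attaining the maximal projective dimension provided by Malayeri et al. \cite{small-depth}, and to read off from that classification a single cut set that realizes the big height $2n-4$.

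One inequality comes for free. As recalled in the introduction, for any homogeneous ideal one has $\pd(S/J_G)\ge\bigheight(J_G)$, so the hypothesis immediately forces $\bigheight(J_G)\le 2n-4$, and the entire content lies in proving the reverse inequality $\bigheight(J_G)\ge 2n-4$. Using the height formula $\height(P_T(G))=n+|T|-c_G(T)$ from \cite[Lemma~3.1]{HHHKR10}, it suffices to produce a single set $T\in\mathcal{C}(G)$ with $|T|-c_G(T)=n-4$. Writing $m:=|\bar T|$, this equation reads $m+c_G(T)=4$; since $1\le c_G(T)\le m$, the only admissible shapes are $(m,c_G(T))=(2,2)$, in which $\bar T$ is a pair of non-adjacent vertices, and $(m,c_G(T))=(3,1)$, in which $\bar T$ is a connected three-vertex graph. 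This reduces the task to exhibiting a $T$ of one of these two rigid forms.

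Next I would rephrase the hypothesis via the Auslander--Buchsbaum identity $\pd(S/J_G)+\depth(S/J_G)=2n$ as $\depth(S/J_G)=4$, the minimal value permitted by \cite[Theorem~5.3]{small-depth}, and then bring in the accompanying characterization of the graphs for which this minimum is attained. Running through the families in that classification, I would in each case isolate either two non-adjacent vertices $u,w$ whose common neighborhood exhausts the rest of $G$ (giving $T=V(G)\setminus\{u,w\}$ of shape $(2,2)$, whose cut point property holds because every remaining vertex lies on a $u$--$w$ path of length two) or a connected triple $\bar T$ together with $T$ satisfying the cut point property (shape $(3,1)$). In either situation $\height(P_T(G))=n+|T|-c_G(T)=2n-4$, so $P_T(G)$ is a minimal prime witnessing $\bigheight(J_G)\ge 2n-4$, and the two inequalities close the argument.

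I expect the main obstacle to be this final step: correctly interpreting the structural data delivered by \cite{small-depth} and checking, family by family, that the natural candidate $T$ genuinely has the cut point property, so that $P_T(G)$ is a bona fide minimal prime contributing to the big height, rather than merely computing $\height(P_T(G))$. The feasibility analysis above is reassuring here, since it shows that only two highly constrained shapes of $T$ can possibly work, which sharply limits the verifications required for each case of the classification.
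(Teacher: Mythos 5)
Your proposal is correct and takes essentially the same route as the paper: both reduce the statement to $\bigheight(J_G)\ge 2n-4$ via the universal inequality and then invoke \cite[Theorem~5.3]{small-depth}, whose classification consists of the single family $G=2K_1*H$, so your anticipated family-by-family analysis collapses to exactly your $(2,2)$ shape with $\bar T=\{u,w\}$ the two vertices of $2K_1$ and $T=V(H)$, while the $(3,1)$ shape never arises. Your direct verification of the cut point property (each $i\in T$ is adjacent to both $u$ and $w$, hence is a cut vertex of the induced path $G[\{u,w,i\}]$) is sound and simply replaces the paper's citation of \cite[Propositions~4.5, 4.14]{DS15}.
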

\begin{proof}
First, it is sufficient to prove that $\bigheight(J_G) \ge 2n-4$. From \cite[Theorem 5.3]{small-depth} and the Auslander-Buchsbaum formula, we know that $\pd\left(\frac{S}{J_G}\right)=2n-4$ if and only if $G=2K_1*H$, where $H$ is any graph on $n-2$ vertices. According to \cite[Propositions 4.5, 4.14]{DS15}, we also know that  $T=V(H) \in \mathcal{C}(G)$. It is important to note that $G[\bar T]$ is a disconnected graph with two isolated vertices. Therefore, we have $\height(P_T(G))=n+|T|-c_G(T)=n+(n-2)-2=2n-4$, and hence, $\bigheight(J_G) \ge \height(P_T(G)) =2n-4.$ This concludes the proof.
 \end{proof}


In this sequel, we establish that the projective dimension of the quotient of the binomial edge ideals of crown graphs is equal to the big height. We recall the definitions of the graph family $\mathcal{G}_T$, where $T\subseteq[n]$, and the $D_5$-type graph below from \cite{D5}, which are useful in proving Theorem~\ref{thm: lowerbounddepth}.

    Let $T\subseteq [n]$ with $|T| = n-2$. In \cite{D5}, the authors defined a family of graphs on $[n]$, denoted by $\mathcal{G}_T$. For each $G\in\mathcal{G}_T$, there exist two non-adjacent vertices $u$ and $w$ of $G$ outside $T$, and three disjoint subsets of $T$, denoted as $V_0, V_1$, and $V_2$ where $V_1, V_2\ne \emptyset$ and $\bigcup_{i=0}^2 V_i = T$. In addition, it satisfies $N_G(u) = V_0\cup V_1$, $N_G(w) = V_0\cup V_2$, and $\{v_1, v_2\} \in E(G)$ for every $v_1\in V_1$ and every $v_2\in V_2$.

The authors in \cite{D5} defined $D_5$-type graphs. A graph $G$ on $[n]$ is said to be a {\it $D_5$-type graph} if  $G\ne G'*2K_1$ for any graph $G'$ and  either $G\in\mathcal{G}_T$ for some $T\subseteq [n]$ or  $G = H*3K_1$, for some graph $H$ or $G = H*(K_1\dot\cup K_2)$, for some graph $H$.

\begin{theorem}\label{thm: lowerbounddepth}
Let $G=C_{n,n}$ for $n \ge 3.$ Then, $$\pd\left( \frac{S}{J_G}\right)=\bigheight(J_G)=4n-6.$$
\end{theorem}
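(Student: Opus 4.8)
The plan is to prove the two equalities separately, using that $\bigheight(J_G)=4n-6$ has already been established in Corollary~\ref{cor:height}. Since $\pd(S/J_G)\ge\bigheight(J_G)$ holds for every homogeneous ideal, the lower bound $\pd(S/J_G)\ge 4n-6$ is immediate, and the whole content of the theorem lies in proving the matching upper bound $\pd(S/J_G)\le 4n-6$.

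For the upper bound I would invoke the stratification of graphs by the projective dimension of their binomial edge ideals near the top of its range. Writing $N:=2n$ for the number of vertices of $C_{n,n}$, the bound of Malayeri et al.~\cite{small-depth} gives $\pd(S/J_G)\le 2N-4=4n-4$, with equality exactly when $G=H*2K_1$; and by \cite{D5} the next value $\pd(S/J_G)=2N-5$ is attained exactly when $G$ is a $D_5$-type graph. Consequently, if $C_{n,n}$ is neither of the form $H*2K_1$ nor a $D_5$-type graph, then $\pd(S/J_G)\le 2N-6=4n-6$, and combined with the lower bound this yields the asserted equality. So the task reduces to excluding these two structural possibilities.

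To carry out the exclusion, the basic tool is a degree count: in $C_{n,n}$ every vertex has degree exactly $n-1$, and for $n\ge 3$ one has $n-1<2n-3$. Each of the forms $H*2K_1$, $H*3K_1$, and $H*(K_1\dot\cup K_2)$ forces a vertex of degree at least $N-3=2n-3$ (the vertices coming from the joined factor are adjacent to all of $V(H)$), which is impossible; this eliminates $H*2K_1$ and two of the three $D_5$-type subcases. The remaining case is to show $C_{n,n}\notin\mathcal{G}_T$ for every $T\subseteq[2n]$ with $|T|=2n-2$. Here the two vertices $u,w$ outside $T$ are non-adjacent, and since $V_0\cup V_1\cup V_2=T$ with $N_G(u)=V_0\cup V_1$ and $N_G(w)=V_0\cup V_2$, they satisfy $N_G(u)\cup N_G(w)=T=V(G)\setminus\{u,w\}$; that is, $\{u,w\}$ is a dominating set. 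I would then use the bipartite structure: two vertices lying in the same part $X$ (or the same part $Y$) cannot dominate, since for $n\ge 3$ there is a third vertex of that part that is non-adjacent to both. Hence a non-adjacent dominating pair must consist of one vertex of $X$ and one of $Y$, and two such vertices $2i-1\in X$, $2j\in Y$ are non-adjacent only when $i=j$, i.e.\ when they are partners $u=2i-1$, $w=2i$. In that case $N_G(u)\cap N_G(w)=\emptyset$ forces $V_0=\emptyset$, $V_1=Y\setminus\{2i\}$, and $V_2=X\setminus\{2i-1\}$; but the defining complete bipartite adjacency between $V_1$ and $V_2$ then fails, because for any $j\ne i$ the partner vertices $2j\in V_1$ and $2j-1\in V_2$ are non-adjacent in $C_{n,n}$. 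Thus $C_{n,n}\notin\mathcal{G}_T$.

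The main obstacle is precisely this $\mathcal{G}_T$ analysis: unlike the join-type cases it cannot be dispatched by a crude degree count, and instead requires identifying the non-adjacent dominating pairs of $C_{n,n}$ (forcing them to be partners) and then exploiting that partner vertices are non-adjacent to break the complete bipartite condition in the definition of $\mathcal{G}_T$. Once all four structural possibilities are excluded, the upper bound $\pd(S/J_G)\le 4n-6$ follows, and together with the lower bound from Corollary~\ref{cor:height} we conclude $\pd(S/J_G)=\bigheight(J_G)=4n-6$.
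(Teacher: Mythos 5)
Your proposal is correct and follows essentially the same route as the paper: rule out $\pd\left(\frac{S}{J_G}\right)=4n-4$ via the $H*2K_1$ characterization in \cite{small-depth}, rule out $\pd\left(\frac{S}{J_G}\right)=4n-5$ via the $D_5$-type characterization in \cite{D5} using that $\deg_G(v)=n-1$ for every vertex, exclude $G\in\mathcal{G}_T$ by the same case analysis on the non-adjacent pair $u,w$ (with the partner case $u=2i-1$, $w=2i$ broken by the non-edges $\{2j-1,2j\}$ between $V_1$ and $V_2$), and then combine with $\bigheight(J_G)=4n-6$ from Corollary~\ref{cor:height}. The only cosmetic difference is that where the paper dispatches the same-part cases by computing $T=N_G(u)\cup N_G(w)=X$ or $Y$, contradicting $|T|=2n-2$, you observe instead that $\{u,w\}$ would have to dominate $G$, which two vertices in the same part cannot do; these are equivalent reformulations of the same step.
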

\begin{proof}
Based on \cite[Theorem~5.2]{small-depth} and the Auslander-Buchsbaum formula, we establish that $\pd\left(\frac{S}{J_G}\right)\le 4n-4$. If it were the case that  $\pd\left(\frac{S}{J_G}\right) = 4n-4$, then according to \cite[Theorem~5.3]{small-depth}, it would follow that $G = H* 2K_1$, where $H$ is a graph on $2n-2$ vertices. However, this would lead to the contradiction that $G$ must have two non-adjacent vertices of degree $2n-2$, whereas we know that $\deg_G(v) = n-1$ for all $v\in V(G)$. Therefore, we conclude that $\pd\left(\frac{S}{J_G}\right)\le 4n-5$. Again, assuming  $\pd\left(\frac{S}{J_G}\right) = 4n-5$, by \cite[Theorem~5.4]{D5}, we deduce that $G$ is a $D_5-$type graph. This means that either $G \in \mathcal{G}_T$ for some $T\subseteq [2n]$ with $|T|=2n-2$, or $G = H*3K_1$ for some graph $H$ on $2n-3$ vertices, or $G = H'*(K_1\overset{.}{\cup}K_2)$ for some graph $H'$ on $2n-3$ vertices. One could observe that $G$ cannot be the latter two graphs. In fact, if $G = H*3K_1$ or $G = H'*(K_1\overset{.}{\cup}K_2)$, then $G$ must have a vertex of degree $2n-3$, which contradicts the fact that $\deg_G(v) = n-1$ for all $v\in V(G).$

Suppose $G\in\mathcal{G}_T$ for some $T\subseteq [2n]$ with $|T|=2n-2$. According to the construction of $\mathcal{G}_T$, there are two non-adjacent vertices $u$ and $w$ in $[2n]\backslash T$, and three disjoint subsets of $T$ denoted as $V_0$, $V_1$, and $V_2$, where $V_1$ and $V_2$ are not empty and $\displaystyle\bigcup_{i=0}^2 V_i = T$. Additionally, $N_G(u) = V_0\cup V_1$, $N_G(w) = V_0\cup V_2$, and $\{v_1, v_2\} \in E(G)$ for $v_1\in V_1$ and $v_2\in V_2$. As $u$ and $w$ are non-adjacent, we will consider the following cases:
\begin{itemize}
    \item If $u = 2i-1$ and $w = 2i$ for some $i\in [n]$, then we have $V_0=\emptyset$, $V_1 = N_{G}(u)= \{2j~:~ j \in [n], j\ne i\}$, and $V_2 = N_{G}(w) = \{2k-1~:~ k\in [n], k\ne i\}$. It is clear that for any $k\in [n]$ with $k \neq i$, $2k\in V_1$, $2k-1\in V_2$, and $\{2k-1,2k\}\notin E(G)$, which leads to a contradiction. 
    \item If $u=2i-1$, $w=2j-1$ for some $i,j\in [n]$, then $T=N_{G}(u)\cup N_{G}(w) = \{2i~:~ i\in [n]\}$, which is a contradiction to the fact that $|T|=2n-2$.
    \item If $u=2i$, $w=2j$ for some $i,j\in [n]$, then $T=N_{G}(u)\cup N_{G}(w) = \{2i-1~:~ i\in [n]\}$, which is a contradiction to the fact that $|T|=2n-2$.
\end{itemize}

Thus, all possible cases lead us to the conclusion that $G$ is not a $D_5$-type graph, and hence $\pd\left(\frac{S}{J_G}\right)\le 4n-6$. By Corollary~\ref{cor:height}, $\bigheight(J_G)=4n-6$. This implies that $4n-6 =\bigheight(J_G) \le \pd\left(\frac{S}{J_G}\right)\le 4n-6$, and hence, the desired result follows.
\end{proof}

Proposition \ref{prop:com-bip} demonstrates that the projective dimension and the big height differ for the binomial edge ideal of $K_{3,3,3}$. This leads to a natural question: 
\begin{question}
    For which class of graphs does $\pd\left( \frac{S}{J_G}\right) = \bigheight(J_G)$ hold?
\end{question}

\section{v-number of binomial edge ideals}\label{localvnumber}
Let $I$ be a proper non-zero homogeneous ideal in $S$, and let $\mathfrak{p} \in \Ass(I)$. The \textit{local Vasconcelos number} of $I$ with respect to $\mathfrak{p}$, denoted by $\vn_{\mathfrak{p}}(I)$, is the least possible degree of a homogeneous element $f$ such that $I:f=\mathfrak{p}$, i.e., $$\vn_{\mathfrak{p}}(I):= \min \{ d~:~ \exists f \in S_d \text{ so that } I:f=\mathfrak{p}\},$$  where $S_d$ is the $\KK$-vector space spanned by all the monomials of degree $d$.  The \textit{Vasconcelos number}, abbreviated as the \textit{$\vn$-number}, of $I$, denoted by $\vn(I)$, is the minimum of the local Vasconcelos numbers of $I$, i.e., $$\vn(I):= \min \{ \vn_\mathfrak{p}(I)~:~ \mathfrak{p}\in \Ass(I)\}.$$
The concept of the $\vn-$number of homogeneous ideals was first introduced by Cooper et al. \cite{CSTPV20}. In this section, we examine the local $\vn-$number of the binomial edge ideals of graphs. The initial investigation of the local $\vn-$number of the binomial edge ideals was undertaken by Jaramillo-Velez and Seccia \cite{JS23}, where they explored the local $\vn-$number of the binomial edge ideals of connected graphs in relation to the binomial edge ideal of a complete graph with the same vertex set. Specifically, they proved that if $G$ is a connected graph, then $\vn_{P_\emptyset(G)}(J_G) = \gamma_c(G)$, where $\gamma_c(G)$ represents the connected domination number of $G$. Simultaneously, Ambhore et al. \cite{vnumberbinomial} independently derived similar and additional results.

\subsection{v-number of binomial edge ideals of crown graphs:}\label{localvcrown} In this subsection, we investigate the $\vn$-number of the binomial edge ideals of crown graphs.

The following lemma may be familiar to researchers in graph theory, but we couldn't find a reference. Therefore, we are providing a proof for completeness.
\begin{lemma}
    \label{lem:mindomset}
  Let $G=C_{n,n}$ with $n \ge 3.$  Then, $\gamma_c(G)=4.$ 
\end{lemma}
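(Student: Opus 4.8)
The plan is to establish the claimed connected domination number $\gamma_c(C_{n,n}) = 4$ by proving two inequalities: that some connected dominating set of size $4$ exists, and that no connected dominating set of size $3$ or fewer can exist. First I would exhibit an explicit connected dominating set of size $4$. A natural candidate exploits the bipartite structure: take two vertices from $X$ and two from $Y$, chosen so that the induced subgraph is connected and every remaining vertex has a neighbor in the set. For instance, consider $T = \{2i-1, 2j-1, 2k, 2\ell\}$ with indices arranged so that the four vertices form a connected subgraph (using that $2i-1$ is adjacent to $2k$ whenever $i \neq k$, etc.). I would verify that $G[T]$ is connected — which amounts to checking the adjacency conditions coming from the crown graph definition, where an odd vertex $2i-1$ is joined to an even vertex $2j$ precisely when $i \neq j$ — and that every other vertex of $C_{n,n}$ is dominated. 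Here each vertex $2a-1 \in X \setminus T$ is adjacent to some even vertex in $T$ (it misses only $2a$), and symmetrically for even vertices, so domination follows easily from $n \ge 3$.

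Next I would prove the lower bound $\gamma_c(G) \ge 4$. The key structural observation is that the crown graph is bipartite with parts $X$ and $Y$, and an odd vertex $2i-1$ fails to be adjacent only to its partner $2i$, while being adjacent to all other even vertices; symmetrically for even vertices. A connected dominating set $T$ must induce a connected subgraph, so $T$ cannot lie entirely within one part (an induced subgraph on vertices from a single part has no edges and is disconnected once $|T| \ge 2$). Thus $T$ must meet both $X$ and $Y$. I would then rule out $|T| \le 3$. If $|T| = 1$, a single vertex cannot dominate the whole graph since each vertex misses its partner and other vertices in its own part. If $|T| = 2$, connectivity forces one vertex in each part, say $2i-1$ and $2j$; but then the partner vertices $2i$ and $2j-1$ are both undominated (for example $2i$ is adjacent only to odd vertices other than $2i-1$, none of which lie in $T$), so domination fails.

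The main obstacle is the case $|T| = 3$, which requires the most careful argument. By the parity/connectivity constraint, $T$ has two vertices in one part and one in the other; without loss of generality say $T = \{2i-1, 2j-1, 2k\}$ with two odd vertices and one even. For $G[T]$ to be connected, the even vertex $2k$ must be adjacent to at least one odd vertex, and indeed to both unless $k$ equals $i$ or $j$. The domination failure comes from examining the partner vertices and the other part: the two odd vertices in $T$ dominate every even vertex except $2i$ and $2j$, but whether those are dominated depends on the single even vertex $2k$. I would argue that the even vertex $2k$ can dominate at most the odd vertices (all except $2k-1$), while the even vertices $2i$ and $2j$ can only be dominated by odd vertices in $T$; since $2i$ is adjacent only to odd vertices $2m-1$ with $m \neq i$, and the only odd vertices available in $T$ are $2i-1$ and $2j-1$, we find $2i$ is dominated only if $2j-1 \in T$ works, i.e.\ $j \neq i$, which holds — so I must instead track which vertex goes undominated. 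A cleaner route is to count: with only one vertex in the even part, the odd vertices of $X \setminus T$ that are \emph{not} dominated are exactly those partnered with the missing even vertex structure; a direct case check (using $n \ge 3$ so that there are at least $n - 2 \ge 1$ vertices in each part outside $T$) will exhibit an undominated vertex in every configuration. I would organize this as a short finite case analysis on how the three vertices split across $X$ and $Y$, concluding that no size-$3$ set dominates, and hence $\gamma_c(G) = 4$.
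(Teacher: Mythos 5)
Your overall strategy is the same as the paper's: exhibit an explicit $4$-element connected dominating set, then rule out sets of size at most $3$ via bipartiteness and the ``partner vertex'' structure (each $2i-1$ misses exactly $2i$ among the even vertices, and symmetrically). The cases $|T|=1,2$ are handled correctly. But the $|T|=3$ case --- which you yourself flag as the main obstacle --- is left genuinely unproven, and the paragraph devoted to it contains a false intermediate claim: you assert that the two odd vertices of $T=\{2i-1,2j-1,2k\}$ ``dominate every even vertex except $2i$ and $2j$,'' whereas in fact $\{2i-1,2j-1\}$ dominates \emph{all} of $Y$ (each even vertex $2m$ is non-adjacent to only one odd vertex, namely $2m-1$, so it is adjacent to at least one of $2i-1,2j-1$ since $i\ne j$). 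That is exactly why your search for an undominated vertex among $2i,2j$ stalls, and you end by deferring to ``a short finite case analysis'' without ever naming the undominated vertex. The missing one-line observation, which is the paper's argument, is that the failure occurs on the side meeting $T$ in a \emph{single} vertex: connectivity of $G[T]$ forces $2k$ to be adjacent to \emph{both} odd vertices (they are non-adjacent to each other, so $2k$ must be a common neighbor), hence $k\notin\{i,j\}$ --- note that your weaker statement ``adjacent to at least one odd vertex'' does not suffice, since $k\in\{i,j\}$ leaves one odd vertex isolated. Then $2k-1\notin T$, and $2k-1$ is adjacent only to even vertices different from $2k$, none of which lie in $T$; so $2k-1$ is undominated. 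The mirror case (two even vertices and one odd vertex $2k-1$) leaves $2k$ undominated in the same way.

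A smaller caveat on your upper bound: your generic candidate $\{2i-1,2j-1,2k,2\ell\}$ needs the index pattern pinned down. If $\{k,\ell\}=\{i,j\}$ the induced subgraph is two disjoint edges (the only missing cross-edges are the partner pairs), hence disconnected; and if you take $i,j,k,\ell$ pairwise distinct to get an induced $K_{2,2}$, you need $n\ge 4$, while the lemma claims $n\ge 3$. A choice sharing exactly one index, such as the paper's $U=\{1,2,4,5\}$ (odd indices $1,3$; even indices $1,2$), induces the path $1$--$4$--$5$--$2$ and is valid for every $n\ge 3$; your domination argument (each vertex outside $U$ misses at most one of the two vertices of $U$ on the opposite side) then goes through verbatim.
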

\begin{proof}
First, we prove that any set $U$ with $|U| \le 3$ is not a connected dominating set. If $|U|=1$, then $U=\{2i-1\}$ or $U=\{2j\}$ for some $1 \le i,j\le n$. Then, either $2i$ or $2j-1$ are not in the neighbor of vertices of $U$. Therefore, $U$ is not a dominating set. Assume  $2 \le|U|$. If $U \subseteq X$ or $U \subseteq Y$, then $G[U]$ is a disconnected graph, i.e., $U$ is not a connected dominating set. Assume $G[U]$  is connected. Then, either $U=\{2k-1,2\ell\}$ or $\{2k-1,2\ell,2m-1\}$ or $\{2j,2k-1,2\ell\}$ for some $1 \le j,k,\ell,m \le n$ with $j \ne k, k \ne \ell, \ell \neq m$. Then, $2\ell-1$ is not a neighbor of any vertices of $U$ in the first two cases, and $2k$ is not a neighbor of any vertices of $U$ in the last case. Thus, if $|U| \le 3,$ then $U$ is not a connected dominating set. 
 \begin{minipage}{\linewidth}
  \begin{minipage}{0.6\linewidth}
Next, take $U=\{1,2,4,5\}$.  We claim that $U$ is a minimally connected dominating set of $G$. Notice that $G[U]$ is a path graph on four vertices as the edges of $G[U]$ are $\{1,4\},\{4,5\},\{2,5\}$. Therefore, $G[U]$ is a connected graph. Recall from Lemma \ref{lem:tech-connected} that $X=\{1,3,\ldots,2n-1\}$ and $Y=\{2,4,\ldots,2n\}.$ Now, every vertex of $X$ is a neighbor $2$ or $4$, and every vertex of $Y$ is a neighbor of $1$ or $5$, see Figure \ref{fig: domC_5,5}. \end{minipage}\hspace{0.15in}
\begin{minipage}{0.35\linewidth} \begin{figure}[H] \centering
    \begin{minipage}{\linewidth}
     \begin{tikzpicture}
\captionsetup{justification=centering,margin=2cm}
        \filldraw (-1,4) circle(.3ex); 
         \filldraw (0,4) circle(.3ex); 
          \filldraw (2,4) circle(.3ex); 
           \filldraw (-2,4) circle(.3ex); 
            \filldraw (3,4) circle(.3ex); 
\filldraw (0,2) circle(.3ex);
\filldraw (-1,2) circle(.3ex);
\filldraw (-2,2) circle(.3ex);
\filldraw (2,2) circle(.3ex);
\filldraw  (3,2) circle(.3ex);
\draw [dashed](3,4)--(-1,2);
\draw [dashed](3,4)--(-2,2);
\draw [dashed](3,4)--(0,2);
\draw [dashed](3,4)--(2,2);
\draw  (-2,4)--(-1,2);
\draw  (0,4)--(-1,2);
\draw [dashed](2,4)--(-1,2);
\draw [dashed](-1,4)--(3,2);
\draw [dashed](-1,4)--(-2,2);
\draw [dashed](-1,4)--(0,2);
\draw [dashed](-1,4)--(2,2);
\draw [dashed](-2,4)--(3,2);
\draw [dashed](2,4)--(3,2);
\draw [dashed](0,4)--(3,2);
\draw [dashed](-2,4)--(0,2);
\draw [dashed](-2,4)--(2,2);
\draw [dashed](2,4)--(0,2);
\draw [dashed](2,4)--(-2,2);
\draw [dashed](0,4)--(2,2);
\draw (0,4)--(-2,2);
\node at (-2,4.3) {\textbf{1}};
\node at (-1,4.3) {$3$};
\node at (0,4.3) {\textbf{5}};
\node at (3.2,4.3) {$2n$-$1$};
   \node at (-2,1.7) {\textbf{2}};
   \node at (-1,1.7) {\textbf{4}};
   \node at (0,1.7) {$6$};
   \node at (1.9,1.7) {$2n$-$2$};
     \node at (1.9,4.3) {$2n$-$3$};
          \node at (3,1.7) {$2n$};
         \node at (.8,1.7){$\ldots$};
         \node at (.8,4.3){$\ldots$};
           \end{tikzpicture}    \caption{}
    \label{fig: domC_5,5}
\end{minipage}
\end{figure} 
\end{minipage}
\end{minipage} 
Thus, $U$ is a connected dominating set. Hence, $\gamma_c(G)=4$. 
\end{proof}

\begin{proposition}\label{prop:Crownv}
    Let $G = C_{n,n}$ with $n\ge 4$. Then,  $\vn_{P_T(G)} (J_G)\le 4$ for all $T \in\mathcal{C}(G)$. 
\end{proposition}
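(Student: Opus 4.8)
The plan is to produce, for each $T \in \mathcal{C}(G)$, an explicit homogeneous $f$ with $\deg f \le 4$ and $J_G : f = P_T(G)$, which immediately yields $\vn_{P_T(G)}(J_G) \le 4$. The guiding reformulation is this: since $J_G = \bigcap_{T' \in \mathcal{C}(G)} P_{T'}(G)$ is the minimal prime decomposition and the minimal primes are pairwise incomparable, for any homogeneous $f$ one has $J_G : f = \bigcap_{T' :\, f \notin P_{T'}(G)} P_{T'}(G)$, and this equals $P_T(G)$ precisely when $f \notin P_T(G)$ while $f \in P_{T'}(G)$ for every $T' \in \mathcal{C}(G) \setminus \{T\}$. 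So the whole task reduces to building a low-degree $f$ that avoids the target prime but lands in all the others. Throughout I would use two membership tests: a monomial lies in $P_{T'}(G)$ iff it uses a variable indexed by $T'$, and a binomial $x_a y_b - x_b y_a$ lies in $P_{T'}(G)$ iff $\{a,b\} \cap T' \ne \emptyset$ or $a,b$ lie in the same connected component of $G[\bar{T'}]$.

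For $T = \emptyset$ the claim is immediate from the cited identity $\vn_{P_\emptyset(G)}(J_G) = \gamma_c(G)$ together with Lemma~\ref{lem:mindomset}, which gives $\gamma_c(C_{n,n}) = 4$. For the remaining cases I use the classification in Theorem~\ref{thm:cut-point-prop}, so the competing primes range over $P_X, P_Y$, the $P_{X\setminus\{2i-1\}}, P_{Y\setminus\{2j\}}$, and the type-(3) primes indexed by unordered pairs $\{p,q\}$. When $T = X$ (the case $T = Y$ is symmetric under $X \leftrightarrow Y$) I propose $f = (x_2 y_4 - x_4 y_2)(x_6 y_8 - x_8 y_6)$: both factors are even–even binomials, so $f \notin P_X = \langle x_i, y_i : i \in X\rangle$, and choosing the two index pairs $\{1,2\}$ and $\{3,4\}$ disjoint (legal since $n \ge 4$) ensures that for every competing prime at least one factor lands in it. When $T$ is of type (3) with $\bar T = \{2i-1,2i,2j-1,2j\}$ I propose $f = (x_{2i-1} y_{2j-1} - x_{2j-1} y_{2i-1})(x_{2i} y_{2j} - x_{2j} y_{2i})$, the product of the odd–odd and even–even binomials joining the two components of $G[\bar T]$; the even–even factor handles $P_Y$ and all $P_{Y\setminus\{2j\}}$, while the crucial point is that any type-(3) prime other than $P_T$ is indexed by a pair different from $\{i,j\}$, so the odd–odd factor lands in it.

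The main obstacle is the case $T = X \setminus \{2i-1\}$ (and symmetrically $T = Y \setminus \{2j\}$). Here $G[\bar T]$ has components $\{2i\}$ and $C = \{2i-1\} \cup (Y \setminus \{2i\})$, and the binomials avoiding $P_T$ are exactly the $\beta_c = x_{2i} y_c - x_c y_{2i}$ with $c \in C$. To force $f \in P_X$ one is essentially compelled to use $\beta_{2i-1}$ (the only such binomial using an odd variable), but $\beta_{2i-1}$ misses every type-(3) prime indexed by a pair $\{i,q\}$, and no single second binomial can simultaneously repair all of these pairs along with $P_{Y\setminus\{2i\}}$. I resolve this by multiplying a binomial by a monomial rather than by a second binomial: take $f = (x_{2i} y_{2i-1} - x_{2i-1} y_{2i})\, x_{2k_1} x_{2k_2}$ with $k_1 \ne k_2$ and $k_1,k_2 \ne i$ (possible since $n \ge 4$). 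The factor $\beta_{2i-1}$ already covers every competing prime except $P_{Y\setminus\{2i\}}$ and the pairs $\{i,q\}$, and the monomial $x_{2k_1}x_{2k_2}$ covers each of these because for every $q \ne i$ at least one of $k_1,k_2$ lies outside $\{i,q\}$, so one of its variables is indexed by the corresponding prime. Since $\beta_{2i-1} \in P_X$, the product lies in $P_X$ automatically, and both factors avoid $P_T$, so $f \notin P_T$.

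To turn this plan into a proof I would, for each family of competing primes, run the two membership tests above against the proposed $f$; these are routine case checks once the components of each $G[\bar{T'}]$ are read off from Theorem~\ref{thm:cut-point-prop}. The only delicate bookkeeping is matching the index pairs in the chosen binomials and monomials against the type-(3) primes, and this is exactly the step where the hypothesis $n \ge 4$ is needed, to guarantee enough free even (respectively odd) indices.
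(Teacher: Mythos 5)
Your proposal is correct and takes essentially the same route as the paper's proof: handle $T=\emptyset$ via $\vn_{P_\emptyset(G)}(J_G)=\gamma_c(G)=4$, then for each nonempty $T\in\mathcal{C}(G)$ classified by Theorem~\ref{thm:cut-point-prop} exhibit an explicit degree-$4$ homogeneous $f$ lying in every minimal prime of $J_G$ except $P_T(G)$, so that $J_G:f=P_T(G)$. Your witnesses agree with the paper's for $T=X$, $T=Y$, and the type-(3) cut sets, and in the case $T=X\setminus\{2i-1\}$ your choice $(x_{2i}y_{2i-1}-x_{2i-1}y_{2i})\,x_{2k_1}x_{2k_2}$ is a harmless variant of the paper's monomial-times-binomial witness $x_{2i-1}x_{2k}(x_{2i}y_{2j}-x_{2j}y_{2i})$, verified by the same membership checks.
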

\begin{proof} 
We know by \cite[Theorem 3.2]{JS23} (see also \cite[Theorem~3.6]{vnumberbinomial}) that if $G$ is a connected graph, then $\vn_{P_\emptyset(G)} (J_G) = \gamma_c(G)$. Therefore, by Lemma~\ref{lem:mindomset} we see $\vn_{P_\emptyset(G)} (J_G) = 4$. Let $T \in \mathcal{C}(G)$ be a nonempty set. Then, by Theorem \ref{thm:cut-point-prop}, we have the following cases:  
    \begin{itemize}
        \item Suppose $T =X$. Take $f=(x_2y_4-x_4y_2)(x_{2i}y_{2i+2}-x_{2i+2}y_{2i})$ for some $3\le i \le n-1$. Notice that $x_2y_4-x_4y_2 \in P_{\emptyset}(G)$. Since $x_2,y_2 \in P_Y(G)$ and   for any $1 \le j \le n$, either $x_2,y_2  \in  P_{Y\setminus \{2j\}}(G)$ or $x_4,y_4  \in  P_{Y\setminus \{2j\}}(G)$, we get $x_2y_4-x_4y_2 \in P_{Y}(G), P_{Y\setminus \{2j\}}(G)$. Also, if $T'=A \cup \{a+1~:~a \in A\}$ for some $A \subseteq X$ with $|A|=n-2$, then either $x_2,y_2 \in P_{T'}(G)$ or $x_4,y_4 \in P_{T'}(G)$ or $x_{2i},y_{2i} \in P_{T'}(G)$. This implies that $f\in P_{T'}(G).$ Finally, if $T'=X \setminus \{2j-1\}$ for some $1 \le j \le n$, then  either $x_2y_4-x_4y_2 \in P_{T'}(G)$ or $x_{2i}y_{2i+2}-x_{2i+2}y_{2i} \in P_{T'}(G)$  which implies that $f \in P_{T'}(G).$  Thus, $f \in P_{T'}(G)$ for all $T' \in \mathcal{C}(G) \setminus \{X\}$, and $f \not\in P_X(G)$. So, $(J_G : f)= P_X(G)$, and hence, $\vn_{P_X(G)} (J_G) \le 4$.
        \item Suppose $T=Y$. Take $f=(x_1y_3-x_3y_1)(x_{2i-1}y_{2i+1}-x_{2i+1}y_{2i-1})$ for some  $3\le i \le n-1$. Using similar argument as above, we get $(J_G : f)= P_Y(G)$, and hence, $\vn_{P_Y(G)}(J_G) \le 4$
        
        \item Suppose $T=X \setminus \{2i-1\}$ for some $i \in [n].$ Take $f=x_{2i-1}x_{2k}( x_{2i}y_{2j}-x_{2j}y_{2i})$, where $i,j,k$ are distinct with $1\le k,j \le n$. Notice that $x_{2i-1} \in P_X(G), P_{X \setminus \{ 2 \ell-1\}}(G)$ for any $\ell \ne i.$ This implies that $f \in P_X(G)$ and $f \in P_{X \setminus \{ 2 \ell-1\}}(G)$ for any $\ell \ne i.$ Since  $x_{2i}y_{2j}-x_{2j}y_{2i} \in P_\emptyset(G)$, we get $f \in P_{\emptyset}(G)$. Next, $x_{2k} \in P_Y(G)$ and either $x_{2k}$ or $x_{2i},y_{2i}$ are elements of $P_{Y\setminus \{2\ell\}}(G)$, we know that $f \in P_Y(G), P_{Y\setminus \{2\ell\}}(G)$ for any $1 \le \ell \le n.$ Finally, if $T'=A \cup \{a+1~:~a \in A\}$ for some $A \subseteq X$ with $|A|=n-2$, then either $x_{2i},y_{2i} \in P_{T'}(G)$ or $x_{2j},y_{2j} \in P_{T'}(G)$ or $x_{2k} \in P_{T'}(G)$. In either case, we get $f\in P_{T'}(G).$ Thus, $f \in P_{T'}(G)$ for all $T' \in \mathcal{C}(G) \setminus \{X\}$, and $f \not\in P_X(G)$. So, $(J_G : f)= P_X(G)$, and hence, $\vn_{P_{X \setminus \{2i-1\}}(G)} (J_G) \le 4$. 
        
        \item Suppose $T=Y \setminus \{2i\}$ for some $i \in [n]$. Take $f=x_{2k-1}x_{2i}
        (x_{2i-1}y_{2j-1}-x_{2j-1}y_{2i-1})$ where $i,j,k$ are distinct with $1\le j,k \le n$. Using the similar arguments to the previous case, we get $(J_G: f)= P_{Y\backslash\{2i\}}(G)$. Thus, $\vn_{P_{Y\setminus \{2i\}}(G)}(J_G) \le 4 $.
        
        \item Suppose $T=A \cup \{a+1~:~ a\in A\}$ where $A \subseteq X$ with $|A|=n-2$ and $2i-1,2i,2j-1,2j \notin T$. Take  $f=(x_{2i-1}y_{2j-1}-x_{2j-1}y_{2i-1})(x_{2i}y_{2j}-x_{2j}y_{2i})$. Then $x_{2i-1}y_{2j-1}-x_{2j-1}y_{2i-1}$ is in $P_\emptyset(G), P_X(G), P_{X\setminus \{2\ell -1\}}$ for every $1 \le \ell \le n$. Also,  $x_{2i}y_{2j}-x_{2j}y_{2i}\in P_Y(G)$ and $P_{Y\backslash\{2\ell\}}$ for $1 \le \ell \le n$. Finally, for any subset $B \ne A$ of $X$ with $|B|=n-2$, either $2i-1 \in B$ or $2j-1 \in B$, i.e., $x_{2i-1}y_{2j-1}-x_{2j-1}y_{2i-1} \in P_{T'}(G)$, where $T'=B \cup \{b+1~:~b \in B\}.$  So, if we take  $f=(x_{2i-1}y_{2j-1}-x_{2j-1}y_{2i-1})(x_{2i}y_{2j}-x_{2j}y_{2i})$, then  $(J_G:f)=P_{T}(G)$. Hence, $\vn_{P_T(G)}(J_G)\le 4.$
    \end{itemize}
The desired result follows from all the considered cases
    \end{proof}

\begin{remark}\label{rmk}
When $G=C_{3,3}$, using the same reasoning as in the proof of Proposition \ref{prop:Crownv}, we find that $\vn_{P_T(G)}(J_G)\le 4 $ for all $T \in \mathcal{C}(G) \setminus \{X,Y\}$. This implies that Proposition \ref{prop:Crownv} applies to all cases except for certain instances where $T \in \mathcal{C}(G)$. It's worth mentioning that $C_{3,3}$ is a cycle graph with six vertices. \end{remark}

\begin{lemma}\label{lem:vlb}
    Let $G=C_{n,n}$ with $n \ge 3.$ Then $\vn_{P_T(G)}(J_G) \ge 3$ for all $T \in \mathcal{C}(G).$
\end{lemma}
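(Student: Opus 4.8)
The plan is to reduce the statement to a purely combinatorial question about which minimal primes a low-degree form can avoid, and then rule out degrees $0,1,2$ in turn. Since $J_G$ is radical with minimal primes exactly $\{P_{T'}(G) : T'\in\mathcal{C}(G)\}$, for any homogeneous $f$ we have $J_G:f=\bigcap_{T'\,:\,f\notin P_{T'}(G)}P_{T'}(G)$. As distinct minimal primes are pairwise incomparable, $J_G:f=P_T(G)$ holds if and only if $f\notin P_T(G)$ while $f\in P_{T'}(G)$ for every $T'\in\mathcal{C}(G)\setminus\{T\}$; that is, if and only if $\{T'\in\mathcal{C}(G) : f\notin P_{T'}(G)\}=\{T\}$. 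Thus $\vn_{P_T(G)}(J_G)$ is the least degree of a form lying outside exactly one minimal prime, and it suffices to prove that no nonzero homogeneous form of degree at most $2$ lies outside exactly one minimal prime of $J_G$.

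Two of the minimal primes are the monomial primes $P_X(G)=\langle x_i,y_i : i\in X\rangle$ and $P_Y(G)=\langle x_i,y_i : i\in Y\rangle$, and $X\cap Y=\emptyset$. A nonzero constant lies outside every minimal prime, so the degree-$0$ case is immediate. For a nonzero linear form $f$ avoiding exactly one prime $P_T(G)$, I would note that two primes with complementary linear parts must both contain $f$: if $T\notin\{X,Y\}$ then $f\in P_X(G)\cap P_Y(G)$, whose degree-$1$ part is $0$ since the generators involve disjoint variable sets; if $T=X$ (resp.\ $T=Y$) then $f$ lies in $P_Y(G)$ and in some $P_{X\setminus\{2i-1\}}(G)$ (resp.\ the symmetric pair), again forcing $f=0$. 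Hence no linear form avoids exactly one prime.

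The degree-$2$ case is the heart of the argument, and I would split it according to $T$. If $T\neq X,Y$, then $f\in P_X(G)\cap P_Y(G)$, so every monomial of $f$ is a product of one odd-indexed and one even-indexed variable. By Theorem~\ref{thm:cut-point-prop}, at least one of the two families $\{P_{X\setminus\{2i-1\}}(G)\}_{i\in[n]}$, $\{P_{Y\setminus\{2j\}}(G)\}_{j\in[n]}$ consists entirely of primes different from $T$; say the first. Reducing $f$ modulo $P_{X\setminus\{2i-1\}}(G)$ kills every odd-indexed variable except those of $2i-1$ and places the surviving part inside the binomial edge ideal $J_{K_C}$ of the complete-ified component $C$. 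Since a degree-$2$ element of an ideal generated by the $2\times 2$ minors $x_ay_b-x_by_a$ is a $\KK$-linear combination of those minors (valid over any field, so no Nullstellensatz is needed), this forces, for each $i$, the vanishing of the ``square'' coefficients and pairs the rest into edge-binomials $x_{2i-1}y_{2b}-x_{2b}y_{2i-1}$. Letting $i$ range over $[n]$ shows $f=\sum_{a\neq b}c_{ab}\,(x_{2a-1}y_{2b}-x_{2b}y_{2a-1})\in J_G\subseteq P_T(G)$, contradicting $f\notin P_T(G)$.

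If $T=X$ (the case $T=Y$ being symmetric), then $f\in P_Y(G)$ kills the purely-odd part, so $f=f_{ee}+f_{\mathrm{mix}}$ with $f_{ee}$ supported on even--even monomials. Here I would use the type-$(3)$ primes $P_{T'}(G)$ with $\bar{T'}=\{2p-1,2p,2q-1,2q\}$, whose non-linear generators are two binomials living in disjoint variable sets. Reducing $f$ modulo such a $P_{T'}(G)$, each even--even monomial of $f$ survives but cannot occur in a combination of those two binomials, so all its coefficients vanish; ranging over all pairs $p\neq q$ gives $f_{ee}=0$, i.e.\ $f\in P_X(G)=P_T(G)$, a contradiction. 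Combining the three degrees yields $\vn_{P_T(G)}(J_G)\ge 3$ for every $T\in\mathcal{C}(G)$. The main obstacle throughout is the degree-$2$ bookkeeping: organizing the reductions modulo the various primes so that every monomial coefficient of $f$ is forced, with the two key levers being that degree-$2$ ideal membership in a quadratically generated ideal is linear-algebraic, and that reducing modulo a prime's linear part isolates a determinantal condition on a controlled set of monomials.
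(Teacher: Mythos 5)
Your proof is correct, but it takes a genuinely different route from the paper's. The paper disposes of the bound by citation: it invokes \cite[Theorem 3.20]{vnumberbinomial} (for connected $H$ on $[n]$, $\vn(J_H)=1$ iff $H$ has a vertex of degree $n-1$) to get $\vn(J_G)\ge 2$, and then rules out $\vn(J_G)=2$ via the neighborhood criterion of \cite[Theorem 4.1]{vconjecture} combined with Proposition~\ref{prop:graph-connectivity}: for non-adjacent $u,v$ in $C_{n,n}$ the set $N_G(u)\cap N_G(v)$ is empty or of size $n-2$, hence not a cut set since $\kappa(G)=n-1$; the local statement follows since $\vn(J_G)=\min_{T}\vn_{P_T(G)}(J_G)$. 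You instead argue directly from the prime structure of Theorem~\ref{thm:cut-point-prop}: using radicality and pairwise incomparability of the minimal primes, you correctly reduce $J_G:f=P_T(G)$ to ``$f$ avoids exactly $P_T(G)$,'' and then run degree-by-degree linear algebra, the two key levers being that $P_X(G)$ and $P_Y(G)$ are monomial primes in disjoint variable sets and that the degree-two graded piece of a homogeneous ideal generated by quadrics is the $\KK$-span of those quadrics. Your degree-two bookkeeping checks out: when $T\notin\{X,Y\}$, membership in $P_X(G)\cap P_Y(G)$ forces all monomials of $f$ to be mixed, and reducing modulo the full family $\{P_{X\setminus\{2i-1\}}(G)\}_{i\in[n]}$ (or its $Y$-analogue when $T=X\setminus\{2i_0-1\}$ --- you correctly arranged to use only primes distinct from $T$) forces $f=\sum_{a\neq b}c_{ab}(x_{2a-1}y_{2b}-x_{2b}y_{2a-1})\in J_G\subseteq P_T(G)$, a contradiction; when $T=X$ or $T=Y$, the type-(3) primes, whose two binomial generators involve only mixed monomials, annihilate the even--even (resp.\ odd--odd) part of $f$ and again force $f\in P_T(G)$. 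As for what each approach buys: the paper's argument is a few lines long and leans on the literature, importing the graph-theoretic characterizations wholesale; yours is self-contained (in effect reproving the needed special cases of the two cited theorems for crown graphs), is valid over an arbitrary field without any extra input, and yields the slightly stronger structural fact that a quadric lying in every minimal prime except possibly one of $P_X(G),P_Y(G)$ must already lie in $J_G$.
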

\begin{proof}
It is known from \cite[Theorem 3.20]{vnumberbinomial} that for a connected graph $H$ on the vertex set $[n]$, $\vn(J_H)=1$ if and only if $H$ has a vertex of degree $n-1.$ Since $G$ is a connected graph on $2n$ vertices and $G$ has no vertex of degree $2n-1$, $\vn(J_G) \ge 2. $ 

For any two vertices $u$ and $v$ in the graph $G$, we have $\deg_G(u) = n-1 = \deg_G(v)$. If $\{u,v\} \in E(G)$, then $|N_G(u) \cup N_G(v)| \le 2n-2$, which means that $N_G(u) \cup N_G(v)$ is not equal to $V(G)$. Also, if $\{u,v\} \not\in E(G)$, then either $N_G(u) \cap N_G(v) = \emptyset$ or $|N_G(u) \cap N_G(v)| = n-2$. From Proposition \ref{prop:graph-connectivity}, we conclude that $N_G(u) \cap N_G(v)$ is not a cut set of $G$. Therefore, by \cite[Theorem 4.1]{vconjecture}, we  infer that $\vn(J_G) \neq 2$, and thus, $\vn(J_G) \ge 3$. \end{proof}

Based on Proposition~\ref{prop:Crownv} and Lemma~\ref{lem:vlb}, we deduce that for the crown graph $G = C_{n,n}$ with $n \ge 4$, 
\[
3 \le \vn(J_G) \le \vn_{P_T(G)}(J_G) \le 4 \quad \text{for every } T \in \mathcal{C}(G).
\]
In particular, we have the following result: 
\begin{theorem}\label{thm:vertex_number_bound}
Let $G = C_{n,n}$ with $n \ge 4$. Then, 
\[
3 \le \vn(J_G) \le 4.
\]
\end{theorem}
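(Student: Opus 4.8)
The plan is to read the statement off directly from the two preceding results, since the sandwich $3 \le \vn_{P_T(G)}(J_G) \le 4$ has already been established for every cut-point set $T$. The one conceptual point to record first is why these local $\vn$-numbers are the only ones that matter. Recall that $J_G$ is a radical ideal (Herzog et al.\ \cite{HHHKR10}, Ohtani \cite{Oh11}), so its associated primes coincide with its minimal primes, and by \cite[Corollary 3.9]{HHHKR10} together with Theorem~\ref{thm:cut-point-prop} these are exactly the ideals $P_T(G)$ with $T \in \mathcal{C}(G)$. Hence $\Ass(J_G) = \Min(J_G) = \{P_T(G) : T \in \mathcal{C}(G)\}$, and the definition of the $\vn$-number specializes to
\[
\vn(J_G) = \min\{\vn_{P_T(G)}(J_G) : T \in \mathcal{C}(G)\}.
\]

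For the upper bound I would invoke Proposition~\ref{prop:Crownv}, which gives $\vn_{P_T(G)}(J_G) \le 4$ for every $T \in \mathcal{C}(G)$ under the hypothesis $n \ge 4$; taking the minimum over $T$ yields $\vn(J_G) \le 4$. Equivalently, one may single out $T = \emptyset$, where $\vn_{P_\emptyset(G)}(J_G) = \gamma_c(G) = 4$ by Lemma~\ref{lem:mindomset} and \cite[Theorem 3.2]{JS23}. For the lower bound I would appeal to Lemma~\ref{lem:vlb}, which shows $\vn_{P_T(G)}(J_G) \ge 3$ for all $T$; since the $\vn$-number is the minimum of these quantities, this forces $\vn(J_G) \ge 3$ as well. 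Combining the two inequalities gives $3 \le \vn(J_G) \le 4$, as claimed.

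There is essentially no obstacle left in this final step: all of the genuine work — the combinatorial description of $\mathcal{C}(G)$ in Theorem~\ref{thm:cut-point-prop}, the explicit witnessing elements $f$ constructed case-by-case in Proposition~\ref{prop:Crownv}, and the use of \cite[Theorem 4.1]{vconjecture} to rule out $\vn(J_G) = 2$ inside Lemma~\ref{lem:vlb} — has been carried out beforehand. The theorem is simply the formal amalgamation of these bounds. The only point requiring a little care is that the hypothesis $n \ge 4$ must genuinely be used: the upper bound from Proposition~\ref{prop:Crownv} needs it, since $C_{3,3}$ is the six-vertex cycle $C_6$ and its local $\vn$-numbers at $T = X, Y$ behave differently, as flagged in Remark~\ref{rmk}.
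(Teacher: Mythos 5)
Your proposal is correct and follows essentially the same route as the paper, which likewise obtains the theorem immediately by combining the upper bound $\vn_{P_T(G)}(J_G)\le 4$ from Proposition~\ref{prop:Crownv} with the lower bound from Lemma~\ref{lem:vlb} and minimizing over $T\in\mathcal{C}(G)$. Your added justifications --- that $\Ass(J_G)=\Min(J_G)=\{P_T(G) : T\in\mathcal{C}(G)\}$ because $J_G$ is radical, and that $n\ge 4$ is needed since $C_{3,3}=C_6$ escapes Proposition~\ref{prop:Crownv} at $T=X,Y$ (Remark~\ref{rmk}) --- are accurate and consistent with the paper's setup.
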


We strongly believe that $\vn(J_G)=4$. In light of Proposition~\ref{prop:Crownv}, this is equivalent to asserting that $\vn_{P_T(G)}(J_G)=4$ for every $T \in \mathcal{C}(G)$. Indeed, we have already established the case $T=\emptyset$ in Lemma~\ref{lem:mindomset}, where $\vn_{P_{\emptyset}(G)}(J_G)=4$. These observations motivate the following conjecture:

\begin{conjecture}
Let $G=C_{n,n}$ with $n \ge 4$. Then, for every $T \in \mathcal{C}(G)$, we have
\[
\vn_{P_T(G)}(J_G)=4.
\]
\end{conjecture}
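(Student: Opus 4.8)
The plan is to prove the conjecture by establishing the matching lower bound $\vn_{P_T(G)}(J_G) \ge 4$ for every $T \in \mathcal{C}(G)$; combined with Proposition~\ref{prop:Crownv} this forces equality. Since Lemma~\ref{lem:vlb} already gives $\vn_{P_T(G)}(J_G) \ge 3$ for all $T$, and the case $T=\emptyset$ is settled by Lemma~\ref{lem:mindomset} via $\gamma_c(G)=4$, it suffices to show that for each nonempty $T \in \mathcal{C}(G)$ there is no homogeneous $f$ of degree $3$ with $J_G : f = P_T(G)$. The starting point is that, because $J_G$ is radical, $\Ass(J_G) = \{P_{T'}(G) : T' \in \mathcal{C}(G)\}$ is a set of pairwise incomparable minimal primes, so $J_G : f = \bigcap_{T' :\, f \notin P_{T'}(G)} P_{T'}(G)$. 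Hence $J_G : f = P_T(G)$ holds exactly when $f \notin P_T(G)$ while $f \in P_{T'}(G)$ for every $T' \in \mathcal{C}(G) \setminus \{T\}$. I would therefore assume such an $f$ of degree $3$ exists and derive a contradiction from this long list of membership constraints.

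The main structural input is that for every nonempty $T$ the empty set lies in $\mathcal{C}(G)\setminus\{T\}$, so $f \in P_\emptyset(G) = J_{K_{2n}}$, the ideal of $2\times 2$ minors of the generic $2\times 2n$ matrix with rows $(x_1,\ldots,x_{2n})$ and $(y_1,\ldots,y_{2n})$. As $J_{K_{2n}}$ is generated in degree $2$, its degree-$3$ component is spanned by the products of a single variable with a $2\times 2$ minor, so I would first write $f = \sum_k \ell_k\,(x_{a_k}y_{b_k}-x_{b_k}y_{a_k})$ with $\ell_k$ linear forms and normalise this expression. I would then impose the remaining memberships in the most restrictive order. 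The generic cut sets $T' = A \cup \{a+1 : a \in A\}$ with $A \subseteq X$, $|A| = n-2$, are decisive: for such $T'$ the prime $P_{T'}(G)$ sets all but eight variables to zero and restricts $f$ to the ideal generated by the two binomials supported on the surviving four-vertex matching $\{2i-1,2j\}$, $\{2j-1,2i\}$. Running this over all $\binom{n}{2}$ choices of the surviving pair, together with the constraints from the two monomial primes $P_X(G), P_Y(G)$ and from the near-complete sets $T'=X\setminus\{2i-1\}$ and $T'=Y\setminus\{2i\}$, should pin down the admissible monomials of $f$ tightly enough to force $f \in P_T(G)$, contradicting $f \notin P_T(G)$.

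I would organise the argument along the four types of $T$ in Theorem~\ref{thm:cut-point-prop} — $X$, $Y$, the sets $X\setminus\{2i-1\}$ and $Y\setminus\{2i\}$, and the generic sets — using the $X \leftrightarrow Y$ symmetry to halve the work. The main obstacle is precisely the final contradiction: degree $3$ leaves just enough slack that inspecting one prime at a time is insufficient, so the contradiction must come from simultaneously intersecting the conditions from many $P_{T'}(G)$ — in particular reconciling membership in the rank-one locus $V(P_\emptyset(G))$ with vanishing on every four-vertex matching locus $V(P_{T'}(G))$. Making this interaction uniform in $n$, rather than verifying it case-by-case for small $n$, is what makes the bound delicate. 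A promising route for overcoming it is to reinterpret the witness condition combinatorially, extending the identity $\vn_{P_\emptyset(G)}(J_G) = \gamma_c(G)$ of \cite{JS23} to a formula for $\vn_{P_T(G)}(J_G)$ in terms of connected domination of the components of $G[\bar T]$, and then checking that this combinatorial quantity equals $4$ for every $T \in \mathcal{C}(G)$.
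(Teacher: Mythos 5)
You should first note that the statement you are trying to prove is stated in the paper as a \emph{conjecture}: the paper itself has no proof of it, only the partial results you cite (the upper bound $\vn_{P_T(G)}(J_G)\le 4$ for all $T\in\mathcal{C}(G)$ in Proposition~\ref{prop:Crownv}, the lower bound $\ge 3$ in Lemma~\ref{lem:vlb}, the case $T=\emptyset$ via Lemma~\ref{lem:mindomset}, and — which you do not use — Lemma~\ref{lem:v-A}, which already settles the generic cut sets $T=A\cup\{a+1: a\in A\}$). Measured against that, your proposal has a genuine gap: it is a strategy outline whose decisive step is never carried out. Everything up to the reduction is sound — since $J_G$ is radical with minimal primes $\{P_{T'}(G): T'\in\mathcal{C}(G)\}$, indeed $J_G:f=\bigcap_{f\notin P_{T'}(G)}P_{T'}(G)$, so $J_G:f=P_T(G)$ iff $f\notin P_T(G)$ and $f\in P_{T'}(G)$ for all $T'\ne T$; and the degree-$3$ normalization $f=\sum_k \ell_k(x_{a_k}y_{b_k}-x_{b_k}y_{a_k})$ is legitimate because $P_\emptyset(G)$ is generated in degree $2$. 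But the claim that intersecting the membership constraints from the $\binom{n}{2}$ generic primes together with $P_X(G)$, $P_Y(G)$ and the near-complete sets ``should pin down the admissible monomials of $f$ tightly enough to force $f\in P_T(G)$'' is exactly the content of the conjecture for the four unresolved types $T=X$, $T=Y$, $T=X\setminus\{2i-1\}$, $T=Y\setminus\{2i\}$, and you acknowledge yourself that you do not know how to make this interaction uniform in $n$. A proof by constraint propagation must actually exhibit the contradiction; asserting that the constraints are restrictive enough is not an argument, and nothing in your sketch rules out, say, a degree-$3$ witness for $T=X$ built from binomials $x_{2i}y_{2j}-x_{2j}y_{2i}$ with carefully chosen linear coefficients.

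Two concrete remarks on how to close the gap. First, you are re-deriving from scratch a case the paper already handles: for the generic sets, Lemma~\ref{lem:v-A} uses Ohtani's identity to compute $J_G:P_T(G)=\bigcap_{v\in T}J_{G_v}$ (the variable generators give $J_G:x_v=J_G:y_v=J_{G_v}$, and the two binomial generators of $P_T(G)$ lie in $J_G$, so they colon to $S$), and then kills degree $3$ by a support argument on the new edges $E(G_v)\setminus E(G)$. For $T=X$ and $T=Y$ this route is even cleaner than yours, since $P_X(G)=\langle x_i,y_i: i\in X\rangle$ is a monomial prime and every vertex of $C_{n,n}$ is internal, so $J_G:P_X(G)=\bigcap_{v\in X}J_{G_v}$ exactly; the open problem is then to show this intersection has no element of degree $3$ outside $P_X(G)$, which is a sharper and more tractable target than your list of memberships, but still requires a new idea (the support trick of Lemma~\ref{lem:v-A} exploits that $T$ meets both $X$ and $Y$, and fails verbatim when $T\subseteq X$). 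Second, your fallback — extending $\vn_{P_\emptyset(G)}(J_G)=\gamma_c(G)$ to a connected-domination formula for $\vn_{P_T(G)}(J_G)$ with $T\ne\emptyset$ — is not something you can cite: no such formula is established in the paper or in \cite{JS23,vnumberbinomial}, and for nonempty $T$ a witness $f$ must additionally multiply the variables $x_i,y_i$, $i\in T$, into $J_G$, which is precisely the obstruction such a formula would need to encode. As it stands, your proposal reduces the conjecture to itself on the hard cases rather than proving it.
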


To further support the conjecture, we now examine the local $\vn$-numbers of binomial edge ideals of crown graphs. Our approach employs the recursive method introduced by Ohtani \cite{Oh11} to analyze the $\vn$-numbers of both crown graphs and cycles. In \cite{Oh11}, Ohtani established that for any graph $G$ and any internal vertex $v$ of $G$,
\[
J_G = J_{G_v} \cap \Big( (x_v, y_v) + J_{G \setminus v} \Big).
\]
This key technical result was first used by the first author in \cite{K21} and his thesis, and was later generalized to the setting of generalized binomial edge ideals (see \cite{genbin}). Since then, it has become a central tool in the study of binomial edge ideals and has found numerous applications. In the remainder of this section, we showcase a few more applications of this method.

\begin{lemma}\label{lem:v-A}
    Let $G=C_{n,n}$ with $n \ge 3.$ Let $A \subset X$ with $|A|=n-2$.  Then $\vn_{P_T(G)}(J_G) =4$ for  $T=A \cup \{i+1~:~ i \in A\}$.
\end{lemma}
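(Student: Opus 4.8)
The plan is to pin the value from both sides: by Proposition~\ref{prop:Crownv} (together with Remark~\ref{rmk} when $n=3$) we already have $\vn_{P_T(G)}(J_G)\le 4$, and Lemma~\ref{lem:vlb} gives $\vn_{P_T(G)}(J_G)\ge 3$, so it remains only to rule out degree $3$, i.e.\ to show that no homogeneous $f$ of degree $3$ satisfies $J_G:f=P_T(G)$. The key reformulation I would use is that, since $J_G$ is radical with $\Ass(J_G)=\{P_S(G):S\in\mathcal{C}(G)\}$ a set of pairwise incomparable minimal primes, one has $J_G:f=\bigcap_{S:\,f\notin P_S(G)}P_S(G)$, and this equals the prime $P_T(G)$ if and only if $f\notin P_T(G)$ while $f\in P_S(G)$ for every $S\in\mathcal{C}(G)\setminus\{T\}$. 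Thus the task becomes: show that a degree-$3$ form lying outside $P_T(G)$ cannot simultaneously lie in all the other minimal primes.

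Next I would fix coordinates. After relabelling the pairs $\{2k-1,2k\}$ by a graph automorphism of $C_{n,n}$, I may assume $\bar T=\{1,2,3,4\}$, so that $G[\bar T]$ consists of the two disjoint edges $\{1,4\}$ and $\{2,3\}$ and $P_T(G)=\langle x_v,y_v:v\in T\rangle+\langle b_1,b_2\rangle$ with $b_1=x_1y_4-x_4y_1$ and $b_2=x_2y_3-x_3y_2$. Writing $f=f_0+f_1$, where $f_0$ collects the monomials in the eight surviving variables $x_1,y_1,\dots,x_4,y_4$ and $f_1$ the rest, the condition $f\notin P_T(G)$ is equivalent to $f_0\notin\langle b_1,b_2\rangle$ in the polynomial ring on those eight variables. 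The goal is then to deduce the opposite inclusion $f_0\in\langle b_1,b_2\rangle$ from membership of $f$ in the remaining primes, producing the contradiction.

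The heart of the argument is to squeeze $f_0$ using four families of primes from Theorem~\ref{thm:cut-point-prop}. The monomial primes $P_X(G)$ and $P_Y(G)$ force every monomial of $f_0$ to contain a variable of each parity. The neighbouring primes $P_{T'}(G)$, with $T'=B\cup\{b+1:b\in B\}$ and $B\ne A$, force every monomial of $f_0$ to involve both pairs $\{1,2\}$ and $\{3,4\}$: here one specializes the two surviving variables of $\bar{T'}$ that lie outside $\{1,2,3,4\}$ to zero. Finally, the four single-deletion primes $P_{X\setminus\{1\}}(G)$, $P_{X\setminus\{3\}}(G)$, $P_{Y\setminus\{2\}}(G)$, $P_{Y\setminus\{4\}}(G)$ each yield a membership of the form $f_0^{(\hat v)}\in\langle b_i\rangle$, where $f_0^{(\hat v)}$ denotes the part of $f_0$ omitting $x_v,y_v$. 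I expect this last family to be the main obstacle, precisely because these primes are \emph{not} monomial: each contains a large determinantal ideal $J_{K_n}$ supported on a star component. Extracting the clean consequence requires computing the elimination ideal $J_{K_n}\cap\KK[\text{the two relevant columns}]=\langle b_i\rangle$, and then checking that the specialization sending all variables outside the three relevant indices to zero carries the survivor part of $f$ exactly onto $f_0^{(\hat v)}$ while carrying $J_{K_n}$ into $\langle b_i\rangle$.

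To finish, I would combine these constraints. The parity and pair conditions restrict the support of each monomial of $f_0$ to the diagonal $\{1,4\}$, the anti-diagonal $\{2,3\}$, or a three-element set, so that $f_0=A+B+C$ with $A$ supported on $\{1,4\}$, $B$ on $\{2,3\}$, and $C$ a sum of three-support pieces $M_{123},M_{124},M_{134},M_{234}$. A short support-matching argument (using that every term of $\ell b_1$ carries both a cell-$1$ and a cell-$4$ variable, and similarly for $b_2$) shows that the relations $f_0^{(\hat 3)}=A+M_{124}\in\langle b_1\rangle$ and $f_0^{(\hat 2)}=A+M_{134}\in\langle b_1\rangle$ force $A\in\langle b_1\rangle$, while $f_0^{(\hat 1)}=B+M_{234}$ and $f_0^{(\hat 4)}=B+M_{123}$ in $\langle b_2\rangle$ force $B\in\langle b_2\rangle$; reducing the four relations modulo $\langle b_1,b_2\rangle$ then collapses the three-support pieces and gives $f_0\equiv-(A+B)\equiv 0$. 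Hence $f_0\in\langle b_1,b_2\rangle$, contradicting $f\notin P_T(G)$, so no degree-$3$ form works and $\vn_{P_T(G)}(J_G)=4$.
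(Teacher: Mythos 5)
Your proposal is correct, and it takes a genuinely different route from the paper. The paper rules out degree $3$ by first computing $J_G:P_T(G)=\bigcap_{v\in T}J_{G_v}$ --- using Ohtani's lemma $J_G=J_{G_v}\cap\left((x_v,y_v)+J_{G\setminus v}\right)$ together with \cite[Proposition 4.2]{CSTPV20} --- and then running a parity count on a presentation $f=\sum_{e\in E(G_u)\setminus E(G)}c_{u,e}f_e$ with $\deg c_{u,e}=1$: choosing $u$ odd forces every term of $f$ to carry at most one odd-indexed variable, which is incompatible with $f\in J_{G_v}$ for the even vertices $v\in T$. You avoid Ohtani's recursion and the colon computation entirely, working only with the minimal-prime description of \cite{HHHKR10}: your reformulation that $J_G:f=P_T(G)$ if and only if $f\in P_S(G)$ for all $S\in\mathcal{C}(G)\setminus\{T\}$ and $f\notin P_T(G)$ is valid since the minimal primes are pairwise incomparable, and the reduction to $f_0\notin\langle b_1,b_2\rangle$ is correct. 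I checked the load-bearing steps: the monomial primes $P_X(G),P_Y(G)$ and the neighbouring primes (take $\bar{T'}=\{1,2,2k-1,2k\}$ or $\{3,4,2k-1,2k\}$ with $k\ge 3$, available for all $n\ge 3$) do confine the supports of $f_0$ to $\{1,4\}$, $\{2,3\}$ and the four triples; each single-deletion prime does specialize onto exactly $\langle b_1\rangle$ or $\langle b_2\rangle$ under the index sets implicit in your four memberships (the choice matters: for $P_{Y\setminus\{4\}}(G)$ one must keep indices $\{1,3,4\}$ rather than $\{1,2,4\}$, since $x_2,y_2$ lie in that prime --- your pairing of $f_0^{(\hat 2)}$ with $b_1$ shows you made the right choice); and the support-matching does split $hb_i$ according to the number of variables from the third index, so in fact each of the six pieces $A,B,M_{123},M_{124},M_{134},M_{234}$ lands individually in $\langle b_1\rangle$ or $\langle b_2\rangle$, giving $f_0\in\langle b_1,b_2\rangle$ even more directly than your closing reduction modulo $\langle b_1,b_2\rangle$. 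What your argument buys: it is self-contained at the level of $\mathcal{C}(G)$ (Theorem \ref{thm:cut-point-prop}), handles $n=3$ uniformly once the upper bound is supplied by Proposition \ref{prop:Crownv} and Remark \ref{rmk}, and proves the sharper fact that any cubic lying in all $P_S(G)$ with $S\ne T$ already lies in $P_T(G)$; what the paper's approach buys is brevity and the reusable identity $J_G:P_T(G)=\bigcap_{v\in T}J_{G_v}$.
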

\begin{proof} Due to \cite[Proposition 4.2]{CSTPV20}, it follows that $$\vn_{P_T(G)}(J_G)= \alpha\left( \frac{J_G:P_T(G)}{J_G} \right),$$  here for a finitely generated positively graded module $M$, $\alpha(M)$ denoted the least positive integer $n$ so that $M_n \neq 0$. This means that to calculate $\vn_{P_T(G)}(J_G)$, we need to examine $J_G:P_T(G)$. We present the proof for the case where $A=\{1,3,\ldots, 2n-5\}$ for simplicity, noting that the same proof applies to any $A\subset X$ with $|A|=n-2$, with some changes in notation. According to Theorem \ref{thm:cut-point-prop}, $P_T(G)$ is a minimal prime of $J_G$ and $$P_T(G)=\langle x_i,y_i~:~ i\in T\rangle+\langle x_{2n-3}y_{2n}-x_{2n}y_{2n-3}\rangle + \langle x_{2n-1}y_{2n-2}-x_{2n-2}y_{2n-1}\rangle .$$
    
    According to \cite[Lemma 4.8]{Oh11},  for every internal vertex $v$ of $G$: $$J_G=J_{G_v}\bigcap \left(( x_v,y_v) +J_{G\setminus v}\right).$$ This implies that $J_G:x_v=J_G:y_v=J_{G_v}$ for every $v\in  T.$ As a result,  $(J_G:x_v) \cap (J_G:y_v)=J_{G_v}$ for all $v \in T$. We also know that $x_{2n-3}y_{2n}-x_{2n}y_{2n-3},~   x_{2n-1}y_{2n-2}-x_{2n-2}y_{2n-1} \in J_G$. Thus, we have $J_G:x_{2n-3}y_{2n}-x_{2n}y_{2n-3}=S=J_G:  x_{2n-1}y_{2n-2}-x_{2n-2}y_{2n-1}.$ 

Therefore, we can express $J_G:P_T(G)$ as follows:
\begin{align*} 
J_G:P_T(G) & = \left(\bigcap_{v \in T} (J_G :x_v)  \right)\cap (J_G: x_{2n-3}y_{2n}-x_{2n}y_{2n-3}) \cap (J_G:  x_{2n-1}y_{2n-2}-x_{2n-2}y_{2n-1})\\
& =\left(\bigcap_{v \in T} (J_G :x_v)  \right) \cap S \cap S= \bigcap_{v \in T} J_{G_v}. 
\end{align*} 
    
    Let $f$ be a homogeneous polynomial such that $J_G:f=P_T(G)$. Then, $f \in \left( J_G:P_T(G)\right)$ and $f \notin J_G$ which implies that  $f \in J_{G_v} \setminus J_G$ for $v \in T.$ Thus, for every $v \in T$, $f$ can be expressed as $$\displaystyle f= \sum_{e \in E(G_v)} c_{v,e}f_e,$$  where $c_{v,e}$ are homogeneous polynomials in $S$, and $f_e=x_iy_j-x_jy_i$ if $e=\{i,j\}$. For some $u \in T$,  we may assume that  $$\displaystyle f= \sum_{e \in E(G_u) \setminus E(G)} c_{u,e}f_e.$$ Indeed, if not, then we can replace $f$ by $f'$ where $f=f'+f''$ with $\displaystyle f'= \sum_{e \in E(G_u) \setminus E(G)} c_{u,e}f_e$ and $\displaystyle f''= \sum_{e \in  E(G)} c_{u,e}f_e.$ Without loss of generality, we may assume that $u=1.$ 
    
    We know from Lemma \ref{lem:vlb} that $\deg(f) \ge 3.$ Suppose that $\deg(f)=3.$ Then, every nonzero homogeneous polynomial $c_{v,e}$ appearing in the expression of $f$ is of degree one. Since $f \in J_{G_v} \setminus J_G$, there exists $e_v \in E(G_v) \setminus E(G)$ such that $c_{v,e_v} \neq 0$, i.e., $c_{v,e_v}$ is a degree one homogeneous polynomial. Now, for $u=1$, $f_{e} \in \langle x_i, y_i ~:~ i\in \{2,4\ldots, 2n\}\rangle^2$ for all $e \in E(G_u) \setminus E(G).$ Thus, at most one variable from each terms of $f$  can be in $\langle \{x_i,y_i~:~ i \in \{1,3,\ldots, 2n-1\}\}\rangle.$ This implies that for all $v \in \{a+1~:~a \in A\}$, $$\displaystyle f= \sum_{e \in E(G_v)} c_{v,e}f_e =\sum_{e \in E(G_v)\setminus E(G)} c_{v,e}f_e+\sum_{e \in E(G)} c_{v,e}f_e\not \in J_{G_v} \setminus J_G$$ as $f_{e_v} \in \langle \{x_i,y_i~:~ i \in \{1,3,\ldots, 2n-1\}\}\rangle^2.$ This leads to a contradiction. Thus, $\deg(f) \ge 4,$ and hence, using Proposition \ref{prop:Crownv}, we get that $\vn_{P_T(G)}(J_G) =4. $ 
\end{proof}

\subsection{v-numbers of binomial edge ideals of cycles:}\label{vcycle}
 Deblina et al. in \cite{vconjecture} conjectured that (see \cite[Conjecture 4.13]{vconjecture}) the $\vn$-number of binomial edge ideal of $C_n$ is $\left\lceil\frac{2n}{3}\right\rceil.$ In this subsection, we prove this conjecture.

\begin{theorem}\label{thm: vT-cycle}
Let $G=C_n$ with $n\ge 6$. Then,  $\vn(J_G)= \left\lceil\frac{2n}{3}\right\rceil.$ 
\end{theorem}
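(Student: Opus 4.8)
The plan is to compute $\vn(J_G)$ through its local values and the identity $\vn(J_G) = \min\{\vn_{P_T}(J_G) : T \in \mathcal{C}(G)\}$, together with the description of the cut sets of a cycle. Since every vertex of $C_n$ ($n\ge 4$) is internal, a set $T$ lies in $\mathcal{C}(C_n)$ precisely when it is an independent set; for such $T$ with $k := |T| \ge 1$ the complement $\bar T$ splits into $k$ arcs (induced paths) $A_1,\dots,A_k$ of sizes $m_1,\dots,m_k$ with $\sum_j m_j = n-k$. The key step I would isolate is the local formula
$$\vn_{P_T}(J_G) = 2k + \sum_{j=1}^{k}\max(m_j-2,0),$$
after which the theorem reduces to a minimization. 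Rewriting the right-hand side as $n-k+a$, where $a$ is the number of arcs of size $1$, minimizing it means maximizing the number $b$ of arcs of size at least $2$; the counting bound $2b+a\le n-k=n-a-b$ gives $3b\le n$, hence $b\le\lfloor n/3\rfloor$, with equality realizable using only arcs of sizes $2$ and $3$. Thus $\min_T(n-k+a)=n-\lfloor n/3\rfloor=\lceil 2n/3\rceil$, and since the remaining case $T=\emptyset$ gives $\vn_{P_\emptyset}(J_G)=\gamma_c(C_n)=n-2\ge\lceil 2n/3\rceil$ for $n\ge 6$, this yields the asserted value.

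For the upper bound in the local formula I would exhibit an explicit witness. Fix $T\in\mathcal{C}(C_n)$; for each $t\in T$ with cycle-neighbours $t-1,t+1\in\bar T$ set $\beta_t:=x_{t-1}y_{t+1}-x_{t+1}y_{t-1}$, and for each arc $A_j$ of size $m_j\ge 3$ let $\mu_j$ be the product of one variable $x_v$ over the $m_j-2$ internal vertices $v$ of $A_j$. Put $f:=\prod_{t\in T}\beta_t\cdot\prod_j\mu_j$, so that $\deg f = 2k+\sum_j\max(m_j-2,0)$. Since $\beta_t$ is the chord binomial of $(C_n)_t$, Ohtani's identity $J_G:x_t=J_G:y_t=J_{G_t}$ gives $f\in\bigcap_{t\in T}J_{G_t}$, hence $fx_t,fy_t\in J_G$. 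For a within-arc pair $\{a,b\}$ the standard path identity shows that the product of the internal-vertex variables along the $a$--$b$ subpath times $x_ay_b-x_by_a$ lies in $J_G$; as $\mu_j$ is divisible by these variables, $f(x_ay_b-x_by_a)\in J_G$. Therefore $f\in J_G:P_T$. Finally $f\notin J_G$, because no factor $\beta_t$ or $x_v$ lies in the prime $P_T$ (the endpoints $t\pm 1$ sit in distinct arcs and each internal $v\notin T$), so $f\notin P_T\supseteq J_G$. Invoking $\vn_{P_T}(J_G)=\alpha\big((J_G:P_T)/J_G\big)$ gives the upper bound.

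The lower bound is the heart of the argument and the step I expect to be the main obstacle. Using the same formula $\vn_{P_T}(J_G)=\alpha\big((J_G:P_T)/J_G\big)$, I must show that every $f\in(J_G:P_T)\setminus J_G$ has degree at least $2k+\sum_j\max(m_j-2,0)$. The guiding principle is that the two families of generators of $P_T$ impose degree on disjoint vertex sets, so their costs add: for each arc $A_j$ the full-span relation $x_{a_0}y_{a_{m_j-1}}-x_{a_{m_j-1}}y_{a_0}$ must annihilate $f$ modulo $J_G$, forcing every term of a $J_G$-reduced form of $f$ to carry a variable at each of the $m_j-2$ internal vertices of $A_j$; whereas membership $f\in J_{G_t}\setminus J_G$ for each $t\in T$ forces a genuine use of the chord at $t$, contributing degree $2$ supported at the arc \emph{endpoints} $t\pm 1$. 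The delicate point is excluding cancellation that would allow one lower-degree polynomial to satisfy all of these requirements at once. I would make this precise by fixing a diagonal lexicographic term order and using the combinatorial description of the initial ideal $\ini(J_G)$ via admissible paths to control reduced forms modulo $J_G$, then arguing on the leading monomial of a minimal-degree $f$: the within-arc constraints pin down divisibility by the internal variables, while an Ohtani-style induction on $n$, peeling off a cut vertex through $J_G=J_{G_v}\cap\big((x_v,y_v)+J_{G\setminus v}\big)$ with $C_n\setminus v=P_{n-1}$, accounts for the degree-$2$ chord cost at each $t\in T$ without double counting against the arc-internal cost. Combining the two estimates gives $\deg f\ge 2k+\sum_j\max(m_j-2,0)$, which closes the lower bound and, after the minimization above, proves $\vn(J_G)=\lceil 2n/3\rceil$.
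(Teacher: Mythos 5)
Your combinatorial frame is sound: the identification of $\mathcal{C}(C_n)$ with independent sets (up to the small slip that singletons must be excluded, since one vertex is not a cut vertex of a cycle --- harmless here, as $k=1$ gives value $n-1$ anyway), the rewriting of your candidate formula as $n-b$ with $b$ the number of arcs of size at least $2$, the bound $2a+3b\le n$ giving $b\le\lfloor n/3\rfloor$, and the realizability with arcs of sizes $2$ and $3$ are all correct and yield $n-\lfloor n/3\rfloor=\lceil 2n/3\rceil$. Your witness $f=\prod_{t\in T}\beta_t\cdot\prod_j\mu_j$ also works: Ohtani's identity gives $fx_t,fy_t\in J_G$, the path identity handles within-arc minors, and primality of $P_T(G)$ gives $f\notin P_T(G)$, so via $\vn_{P_T(G)}(J_G)=\alpha\bigl((J_G:P_T(G))/J_G\bigr)$ you obtain the upper bound $\vn(J_G)\le\lceil 2n/3\rceil$ --- something the paper simply imports from \cite[Corollary~4.12]{vconjecture}.

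The genuine gap is the lower bound of your exact local formula $\vn_{P_T(G)}(J_G)=2k+\sum_j\max(m_j-2,0)$, which you yourself flag as ``the main obstacle'' and then only sketch. The two additivity claims --- that each $t\in T$ forces a degree-$2$ ``chord cost'' at $t\pm1$ and each arc independently forces its internal-vertex variables, with no cancellation allowing a cheaper $f$ to satisfy all constraints simultaneously --- are precisely the hard content, and the proposed tools (a diagonal term order, admissible paths for $\ini(J_G)$, an ``Ohtani-style induction without double counting'') are named but never executed; nothing in the proposal rules out interference between the within-arc and at-$T$ requirements modulo $J_G$. Note moreover that an exact formula for all $\vn_{P_T(G)}(J_G)$ would answer the open question the paper poses in its final lines, a strong indication this step is substantially harder than the theorem itself. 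The paper sidesteps it entirely: it needs only the uniform bound $\vn_{P_T(G)}(J_G)\ge\lceil 2n/3\rceil$, obtained for nonempty $T$ (with $n\in T$) by writing $f=g+c_{e'}f_{e'}$ with $g\in J_G$ and $e'=\{1,n-1\}$ the chord of $G_n$, invoking the colon formula for $J_G:f_{e'}$ from \cite[Theorem~3.7]{Hilbbino} to contract to the path $P_{n-1}$, and then using a degree comparison (here $\lceil 2n/3\rceil-2<n-3$ for $n\ge 6$) together with $\vn(J_{P_{n-1}})=\lceil 2(n-2)/3\rceil$ and \cite[Corollary~3.6]{vconjecture} to reach a contradiction. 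To salvage your route you would either have to genuinely prove the local formula's lower bound, or weaken your key lemma to this uniform bound and adopt a reduction of the paper's type.
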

\begin{proof}
    Let $T\in\mathcal{C}(G)$ be any subset. If $T$ is empty, then by \cite[Theorem 3.2]{JS23} (also see \cite[Theorem~3.6]{vnumberbinomial}), we know that $\vn_{P_\emptyset(G)}(J_G)=\gamma_c(G)$. It is well-known that $\gamma_c(C_n)=n-2$. Therefore, we have $\vn_{P_\emptyset(G)} (J_G) =n-2 \ge \left\lceil\frac{2n}{3}\right\rceil$. Now, assume that $T$ is not empty, i.e., there exists a vertex $v$ such that $v \in T$. Without loss of generality, let's assume that $v=n$. Let $f\in S$ be a homogeneous polynomial such that $J_G: f = P_T(G)$. According to \cite[Lemma~4.8]{Oh11}, we have $$J_G=J_{G_v}\bigcap \left(( x_v,y_v)+J_{G\setminus v}\right).$$ 
    
    Since $f\in P_A(G)$ for all $A \in \mathcal{C}(G)$ with $A\neq T$, it follows that $f\in P_A(G)$ when $v\notin A$. This means that $f \in J_{G_v}$. It's worth noting that $G_v$ is a graph with vertex set $[n]$ and edge set $E(G_v)=E(G) \cup \{\{1,n-1\}\}$. Thus, we can express $f$ as $f = \sum_{e\in E(G_v)} c_ef_e$, where $c_e$ are homogeneous polynomials in $S$, and $f_e=x_iy_j-x_jy_i$ if $e=\{i,j\}$. We set $e'=\{1,n-1\}$. It is easy to  observe that $f=g+c_{e'}f_{e'}$ where $g \in J_G$, and hence,  $J_G:f=J_G:(c_{e'}f_{e'})$. This leads to $P_T(G)=J_G: c_{e'}f_{e'} =(J_G:f_{e'}):c_{e'}$.
    
    It follows from \cite[Theorem~3.7]{Hilbbino} that
    \begin{align*}\label{eq: vn}
        P_T(G) &= \left(J_{G_{e'}} + \langle x_n, y_n, x_2x_3\cdots x_{n-2}, y_2x_3\cdots x_{n-2}, \ldots, y_2y_3\cdots y_{n-2}\rangle \right) : c_{e'},
    \end{align*}
where $G_{e'}$ is the graph on the vertex set $[n]$ and edge set $E(G_{e'})=E(G) \cup \left\{\{2,n\},n-2,n\} \right\}.$ One can observe that $J_{G_{e'}}+(x_n,y_n)=J_{P_{n-1}} +\langle x_n,y_n \rangle $, where $P_{n-1}$ is the path graph on vertices $1,2,\ldots,n-1$. We set $K = \langle x_2x_3\cdots x_{n-2}, y_2x_3\cdots x_{n-2}, \ldots, y_2y_3\cdots y_{n-2} \rangle$. Then,   
    $P_T(G)= \left(\langle x_n, y_n\rangle + J_{P_{n-1}} + K\right) : c_{e'}.$ 
If $x_n$ or $y_n$ appear in some terms of $c_{e'}$, we can express $c_{e'}$ as $ax_n+by_n+c$ where $a$, $b$, and $c$ are homogeneous polynomials in $S$. Then, we have $P_T(G)=\left(\langle x_n, y_n \rangle + J_{P_{n-1}} + K\right) : c_{e'} =\left(\langle x_n, y_n \rangle + J_{P_{n-1}} + K\right) : c$. Without loss of generality, we assume that $c=c_{e'}$, meaning $x_n$ or $y_n$ do not appear in any term of $c_{e'}$. From \cite[Lemma 3.6]{HJKN}, we have $P_T(G)= \langle x_n, y_n \rangle  + \left(J_{P_{n-1}} + K\right) : c_{e'}$. Let $R=\KK[x_i,y_i~:~ i\in[n-1]]$. Contracting $P_T(G)$ to $R$, we obtain $P_{T\setminus \{n\}}(G\setminus \{n\})= \left(J_{P_{n-1}} + K\right) : c_{e'}.$

Let $A=T\setminus \{n\}.$ Note that $A \in \mathcal{C}(P_{n-1})$ and $A \neq \emptyset.$ Suppose now, to the contrary, $\deg(f) \le \left\lceil\frac{2n}{3}\right\rceil-1.$ Then, $\deg(c) =\deg(f)-2\le \left\lceil\frac{2n}{3}\right\rceil-3.$ Now, for any $i \in A$,  $x_i,y_i \in P_A(P_{n-1})$ which implies  $x_ic, y_ic \in  J_{P_{n-1}} + K$ as $P_A(P_{n-1})= \left(J_{P_{n-1}} + K\right):c$. That is, $$x_ic= \sum_{e \in E(P_{n-1})}u_ef_e+v_1(x_2x_3\cdots x_{n-2})+v_2(y_2x_3\cdots x_{n-2})+\cdots +v_{n-2} (y_2y_3\cdots y_{n-2}),$$ where $u_e$'s and $v_i$'s are homogeneous polynomials in $R$. Since $n\ge 6$,  $\left\lceil\frac{2n}{3}\right\rceil-2 <n-3$. Thus, we get that $x_ic \in J_{P_{n-1}}$ by comparing degrees on both sides. Similarly, we get $y_ic \in J_{P_{n-1}}.$ That is $x_i,y_i \in J_{P_{n-1}}:c.$ Now, for each $i\in A$, it holds that $2 \le i \le n-2$. Thus, we have $K \subseteq J_{P_{n-1}}:c$. Additionally, we also know that $J_{P_{n-1}} \subseteq J_{P_{n-1}}:c$. As a result, $J_{P_{n-1}}+K \subseteq J_{P_{n-1}}:c$, implying that $\left( J_{P_{n-1}}+K \right) :c\subseteq J_{P_{n-1}}:c^2=J_{P_{n-1}}:c $. Therefore, $P_A(P_{n-1})= \left(J_{P_{n-1}} + K\right):c=  J_{P_{n-1}}:c $. Now, due to  \cite[Corollary 3.6]{vconjecture},  $\deg(c) \ge \vn_{P_A(P_{n-1})}(J_{P_{n-1}}) \ge \vn(J_{P_{n-1}}) =\left\lceil\frac{2(n-2)}{3}\right\rceil$. This leads to a contradiction as we assume $ \deg(c) \le \left\lceil\frac{2n}{3}\right\rceil-3.$ Therefore, $\deg(f) \ge \left\lceil\frac{2n}{3}\right\rceil $, and hence, $\vn_{P_T(G)}(J_{G}) \ge \left\lceil\frac{2n}{3}\right\rceil$, which further implies that  $\vn\left(J_{G}\right)\ge \left\lceil\frac{2n}{3}\right\rceil$. Now, the rest follows from  \cite[Corollary~4.12]{vconjecture}.
\end{proof}

As an immediate consequence, we obtain the $\vn$-number of the binomial edge ideal of the crown graph $C_{3,3}.$

\begin{corollary}
    Let $G=C_{3,3}$. Then, $\vn(J_G)=4.$
\end{corollary}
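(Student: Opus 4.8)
The plan is to deduce this corollary directly from the preceding results by identifying the crown graph $C_{3,3}$ with the cycle $C_6$ and then invoking Theorem~\ref{thm: vT-cycle}. First I would observe that the crown graph $C_{3,3}$ is, up to relabeling of vertices, precisely the $6$-cycle; this is already noted in Remark~\ref{rmk}, where the authors point out that ``$C_{3,3}$ is a cycle graph with six vertices.'' Since the binomial edge ideal $J_G$ depends only on the isomorphism class of $G$ (a graph isomorphism induces a corresponding permutation of the variables $x_i, y_i$, which is a graded ring automorphism of $S$ carrying $J_{C_{3,3}}$ onto $J_{C_6}$), the $\vn$-number is a graph isomorphism invariant. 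Hence $\vn(J_{C_{3,3}}) = \vn(J_{C_6})$.

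Next I would simply compute the right-hand side using Theorem~\ref{thm: vT-cycle}. That theorem gives $\vn(J_{C_n}) = \left\lceil \frac{2n}{3}\right\rceil$ for all $n \ge 6$, and in particular for $n = 6$ we obtain
\[
\vn(J_{C_6}) = \left\lceil \frac{12}{3}\right\rceil = \left\lceil 4 \right\rceil = 4.
\]
Combining this with the isomorphism $C_{3,3} \cong C_6$ yields $\vn(J_{C_{3,3}}) = 4$, which is the desired conclusion.

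There is essentially no obstacle here, since the statement is an immediate specialization of Theorem~\ref{thm: vT-cycle} to the boundary value $n = 6$. The only point requiring a moment's care is verifying the graph isomorphism $C_{3,3} \cong C_6$ explicitly, rather than relying on the informal remark. Concretely, one checks that $C_{3,3}$ is the bipartite graph on $\{1,3,5\} \sqcup \{2,4,6\}$ with edges $\{2i-1, 2j\}$ for $i \ne j$; tracing these edges shows each vertex has degree $2$ and the six vertices form a single cycle, for instance $1 - 4 - 3 - 6 - 5 - 2 - 1$, exhibiting the desired isomorphism with $C_6$. With this identification in hand, the corollary follows at once from Theorem~\ref{thm: vT-cycle}, and no further estimates on local $\vn$-numbers are needed.
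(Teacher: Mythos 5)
Your proposal takes essentially the same route as the paper: the paper's proof likewise identifies $C_{3,3}$ with the $6$-cycle (via Remark~\ref{rmk}) and then specializes Theorem~\ref{thm: vT-cycle} to $n=6$, so your argument matches in both structure and substance. One small correction to your explicit verification: the trace $1-4-3-6-5-2-1$ is not a cycle in $C_{3,3}$, since $\{3,4\}$, \{5,6\}$ and $\{1,2\}$ are exactly the pairs $\{2i-1,2i\}$ excluded by the condition $i\neq j$ (the deleted perfect matching); a correct Hamiltonian cycle is $1-4-5-2-3-6-1$. Your surrounding reasoning --- each vertex has degree $2$, the graph is connected, hence it is a single $6$-cycle, and $\vn$ is invariant under graph isomorphism --- is sound and yields $\vn(J_{C_{3,3}})=\left\lceil \tfrac{12}{3}\right\rceil=4$ as claimed.
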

\begin{proof}
    As we pointed out in Remark \ref{rmk} $C_{3,3}$ is a cycle graph on six vertices. The rest follows from Theorem \ref{thm: vT-cycle}. 
\end{proof}

We conclude the article by posing the following question: 

\begin{question}
What are the local $\vn$-numbers of the binomial edge ideal of cycles?\end{question}

\noindent \textbf{Data Availability:} No data is associated with this work.

\printbibliography

\end{document}